\theoremstyle{plain}
\newtheorem{theorem}{Theorem}[section]
\newtheorem{lem}[theorem]{Lemma}
\newtheorem{prop}[theorem]{Proposition}
\newtheorem{coroll}[theorem]{Corollary}
\theoremstyle{definition}
\newtheorem{defi}{Definition}[section]
\theoremstyle{remark}
\newtheorem{rem}[theorem]{Remark}
\newcommand{\R}{\mathbb{R}}
\newcommand{\N}{\mathbb{N}}
\newcommand{\M}{\mathbb{M}}
\newcommand{\Q}{\mathcal{Q}}
\newcommand{\W}{\mathbb{W}}
\newcommand{\V}{\mathbb{V}}
\newcommand{\T}{\mathbb{T}}
\newcommand{\C}{\mathbb{C}}
\newcommand{\E}{\mathcal{E}}
\newcommand{\HH}{\mathbb{H}}
\newcommand{\B}{\mathcal{B}}
\newcommand{\G}{\mathbb{G}}
\newcommand{\eps}{\epsilon}
\newcommand{\graph}[1]{\mathrm{graph}\,(#1)}
\newcommand{\norm}[1]{\left\Vert{#1}\right\Vert}
\newcommand{\mfrak}{\mathfrak}
\newcommand{\mcal}{\mathcal}
\newcommand{\average}{{\mathchoice {\kern1ex\vcenter{\hrule height.4pt
width 6pt
depth0pt} \kern-9.7pt} {\kern1ex\vcenter{\hrule height.4pt width 4.3pt
depth0pt}
\kern-7pt} {} {} }}
\newcommand{\ave}{\average\int}
\title{Intrinsic differentiability and Intrinsic Regular Surfaces in Carnot groups}
\date{\today}
\author[D. Di Donato]{Daniela Di Donato}
\address{Daniela Di Donato: Dipartimento di Matematica\\Universit\`a di Trento\\ Via Sommarive 14\\ 38123, Povo (Trento) - Italy\\} 
\email{daniela.didonato@unitn.it}
    \thanks{D.D.D. is supported by University of Trento, Italy.}
\subjclass[2010]{Primary 35R03; Secondary 28A75, 53C17}
\keywords{Carnot groups, intrinsic differentiability, intrinsic regular surfaces, intrinsic graphs}
\begin{document}

\begin{abstract}
A Carnot group $\G$ is a connected, simply connected, nilpotent Lie group with stra\-ti\-fied Lie algebra. Intrinsic regular surfaces in Carnot groups play the same role as $\C^1$ surfaces in Euclidean spaces. As in Euclidean spaces, intrinsic regular surfaces can be locally defined in different ways: e.g. as non critical level sets or as continuously intrinsic differentiable graphs. The equivalence of these natural definitions is the problem that we are studying. Precisely our aim is to generalize the results on \cite{biblio1} valid in Heisenberg groups to the more general setting of Carnot groups.

\end{abstract}

\maketitle

\section{Introduction}

 In the last years a systematic attempt to develop a good notion of rectifiable sets in metric space and in particular inside Carnot groups, has become the object of many studies. For a general theory of rectifiable sets in Euclidean spaces one can see  \cite{biblio5, biblio4, biblio16} while a general theory in metric spaces can be found in \cite{biblioAMBkirc}. 

Rectifiable sets are classically defined as contained in  the countable union of $\C^1$ submanifolds. In this paper we focus our attention on the natural notion of $\C^1$ surface, inside a special class of metric spaces i.e. the Carnot groups $\G$ of step $\kappa$. A short description of Carnot groups is in Section 2. Here we simply recall that they are connected, simply connected Lie group whose Lie algebra $\mathfrak g$ admits a step $\kappa$ stratification. Through the exponential map, a Carnot group $\G$ can be identified with $\R^N$, for a certain $N>0$, endowed with a non commutative  group operation. 

Euclidean spaces are commutative Carnot groups and are  the only commutative ones. The simplest but, at the same time, non-trivial instances of  non-Abelian Carnot groups are provided by the Heisenberg groups $\mathbb{H}^n$ (see for instance \cite{biblio3}).

A Carnot group $\G$ is endowed with a natural left-invariant metric $d$. 
Non commutative Carnot groups, endowed with their left invariant metric are not Riemannian manifolds  not even locally. In fact they are particular instances of so called sub Riemannian manifolds.


Main objects of study in this paper are the notions of regular surfaces and of intrinsic graphs and their link.

Intrinsic regular surfaces in Carnot groups should play the same role as $\C^1$ surfaces in Euclidean spaces. In Euclidean spaces, $\C^1$ surfaces can be locally defined in different ways: e.g. as non critical level sets of $\C^1$ functions or, equivalently, as graphs of $\C^1$ maps between complementary linear subspaces. In Carnot groups the equivalence of these definitions is not  true any more. One of the main aim of this paper is to find the additional assumptions in order that these notions are equivalent in $\G$. Precisely we want to generalize the results on \cite{biblio1} valid in Heisenberg groups to the more general setting of Carnot groups.

Here by the word $intrinsic$ and $regular$ we want to emphasize a privileged role played by group translations and dilations, and its differential structure as Carnot-Carath\'eodory manifold in a sense we will precise below. 

We begin recalling that an intrinsic regular hypersurface (i.e. a topological codimension 1 surface) $S\subset \G$ is locally defined as a non critical level set of a $\C^1$ intrinsic function. More precisely, there exists a continuous function $f:\G \to \R$ such that locally $S=\{ P\in \G : f(P)=0\}$ and the intrinsic gradient $\nabla _\G f=(X_1f,\dots , X_m f)$ exists in the sense of distributions and it is continuous and non vanishing on $S$. In a similar way, a $k$-codimensional regular surface $S\subset \G$ is locally defined as a non critical level set of a $\C^1$ intrinsic vector function $F:\G\to \R^k$.


On the other hand, the intrinsic graphs came out naturally in \cite{biblio6}, while studying level sets of Pansu differentiable functions from $\mathbb{H}^n$ to $\R$.  The simple idea of intrinsic graph is the following one: let $\mathbb{M}$ and $\W$ be complementary subgroups of $\G$, i.e. homogeneous subgroups such that  $\W \cap \mathbb{M}= \{ 0 \}$ and $\G=\W\cdot \mathbb{M}$ (here $\cdot $ indicates the group operation in $\G$ and $0$ is the unit element), then the intrinsic left graph of $\phi :\W\to \mathbb{M}$ is the set
\begin{equation*}
\graph {\phi}:=\{ A\cdot \phi (A) \, |\, A\in \W \}.
\end{equation*}
Hence the existence of intrinsic graphs depends on the possibility of splitting $\G$ as a product
of complementary subgroups hence it depends on the structure of the algebra $\mathfrak g$.


 By Implicit Function Theorem, proved in \cite{biblio6} for the Heisenberg group and in  \cite{biblio7} for a general Carnot group (see also Theorem 1.3, \cite{biblioMAGNANI}) it follows
 \begin{equation*}
\mbox{a }  \G \mbox{-regular surface $S$ locally is an intrinsic graph of a suitable function } \phi.
\end{equation*}

Consequently, given an intrinsic graph $S=\graph {\phi} \subset \G$, the main aim of this paper is to find necessary and sufficient assumptions on $\phi $ in order that the opposite implication is true. 



Following \cite{biblioLIBROSC}, in Section 3 we define an appropriate notion of differentiability  for a map acting between complementary subgroups of $\G$  here denoted as \textit{uniform intrinsic  differentiability} (see Definition $\ref{Ndef1.1}$).  In Theorem \ref{teo4.1}  we characterize  $\G$-regular intrinsic graphs as graphs of uniformly intrinsic  differentiable functions $\phi:\E \subset \W \to \M$ where $\G$ is a step $\kappa$ Carnot group, $\W, \M$ complementary subgroups, with $\M$  horizontal and $k$ dimensional. This result  generalizes Theorem 1.2 in \cite{biblio1} proved in Heisenberg groups (see also \cite{biblio12}).
As a  corollary of this result and Theorem 3.3.5 in \cite{biblioKOZHEVNIKOV}, 
 we get a comparison between the Reifenberg vanishing flat set  and the uniformly intrinsic differentiable map (see Theorem $\ref{biblioKOZHEVNIKOV}$).

When $\M$ is one dimensional we can identify $\phi:\E \subset \W \to \M$ with  a real valued continuous function defined on an one codimensional homogeneous subgroup of $\G$ (see Remark \ref{remIMPORT}). In this case in Heisenberg groups, it is known after the results in \cite{biblio1, biblio27} that the intrinsic differentiability of $\phi$ is equivalent to the existence and continuity of suitable \lq derivatives\rq\,  $D^\phi _j\phi$ of $\phi$.  The non linear first order differential operators $D^\phi _j$ were introduced by Serra Cassano et al. in the context of Heisenberg groups $\HH^n$ (see \cite{biblioLIBROSC} and the references therein).  Following the notations in \cite{biblioLIBROSC} the operators $D^\phi _j$ are denoted as \emph{intrinsic derivatives} of $\phi$ and $D^\phi \phi$, the vector of the intrinsic derivatives of $\phi$, is the  intrinsic gradient of $\phi$.  
 In the first Heisenberg group $\mathbb{H}^1$  the intrinsic derivative  $D^\phi \phi$ reduces to the classical Burgers' equation. 
 
  In \cite{biblio1, biblio17, biblio27}, the authors introduce and investigate some suitable notions of weak solution for the non-linear first order PDEs' system
\begin{equation}\label{PINCO}
D^{\phi } \phi  =w \quad \mbox{ in} \,\, \mcal O,
\end{equation}
being $w$ a prescribed continuous function and $\mcal O \subset \R^{N-1}$. 
In 
\cite{biblio1} and \cite{biblio27} $\phi$ and $w$ are continuous functions, while in \cite{biblio17} $w$ is only a bounded measurable function.



In particular in \cite{biblio1} it was introduced the concept of broad* solution of the system $\eqref{PINCO}$ (see Definition $\ref{defbroad*}$). In $\HH^1$ this notion extends the classical notion of broad solution for Burgers' equation through characteristic curves provided  $\phi$ and $w$ are locally Lipschitz continuous. In our case  $\phi$ and $w$  are supposed to be only continuous then the classical theory breaks down. On the other hand broad* solutions of the system $\eqref{PINCO}$ can be constructed with a continuous datum $w$.

In Section 5  we extend the results in \cite{biblio1} using the  notion of  broad* solution of the system $\eqref{PINCO}$. We study real valued functions defined on an one codimensional homogeneous subgroup of a Carnot group of step 2. We define the appropriate notion of intrinsic derivative in this setting and we extend Theorem 1.3 and Theorem 5.7 in \cite{biblio1} proved in $\HH^n$.
Indeed we analyze the $\G$-regular hypersurfaces in a subclass of step two Carnot groups including the Heisenberg groups. This class is shown in \cite{biblio3} (see Section 5.1).
 Precisely, in Theorem \ref{finale1} we prove that the intrinsic graph of continuous map $\phi$ is  a regular surface if and only if $\phi$ is broad* solution of $\eqref{PINCO}$ and it is 1/2-little H\"older continuous (see Definition \ref{big3.3.11}). We also show that these assumptions are equivalent to the fact that $\phi $ and its intrinsic gradient can be uniformly approximated by  $\C^1$ functions. 
 \\
 
$\mathbf{Acknowledgements.}$ We wish to express our gratitude to R.Serapioni and F.Serra Cassano, for having signaled us this problem and for many invaluable discussions during our PhD at University of Trento. We thank A.Pinamonti for important suggestions on the subject. We also thank B.Franchi e D.Vittone for a careful reading of our PhD thesis and of this paper.



\section{Notations and preliminary results}

\subsection{Carnot groups} We begin by recalling briefly the definition of Carnot groups. For a general account see e.g. \cite{biblio3, biblio22, biblioLeDonne, biblioLIBROSC}. 

A Carnot group $\G=(\G, \cdot, \delta_\lambda)$ of step $\kappa$  is a connected and simply connected Lie group whose Lie algebra $\mathfrak g$ admits a stratification, i.e. a direct sum decomposition $\mathfrak g=V_1\,  \oplus \, V_2 \, \oplus \dots \oplus \, V_\kappa$. The stratification has the further property  that the entire Lie algebra $\mathfrak g$ is generated by its first layer $V_1$, the so called horizontal layer, that is 
\[
\left\{
\begin{array}{l}
[V_1, V_{i-1}]= V_i \quad \mbox{if } \, 2\leq i \leq \kappa \\ \hspace{0,05 cm}  [V_1,V_\kappa]=\{0 \}
\end{array}
\right.
\]
We denote by $N$ the dimension of $\mathfrak g$ and by $n_s$ the dimension of $V_s$.  

The exponential map $\exp :\mathfrak g \to \G$ is a global diffeomorphism from $\mathfrak g$ to $\G$.
Hence, if we choose a basis $\{X_1,\dots , X_N\}$ of $\mathfrak g$,  any $P\in \G$ can be written in a unique way as $P=\exp (p_1X_1+\dots +p_NX_N)$ and we can identify $P$ with the $N$-tuple $(p_1,\dots , p_N)\in \R^N$ and $\G$ with $(\R^N,\cdot, \delta_\lambda)$. The identity of $\G$ is the origin of $\R^N$.
 


For any $\lambda >0$, the (non isotropic) dilation $\delta _\lambda :\G\to \G$ are automorhisms of $\G$ and are  defined as 
\[
\delta _\lambda (p_1,\dots , p_N)=(\lambda ^{\alpha _1} p_1,\dots , \lambda ^{\alpha _N}p_N)
\]
where $\alpha _i \in \N$ is called homogeneity of the variable $p_i$ in $\G$ and is given by $\alpha _i=j$ whenever $m_{j-1}<i\leq m_j$ with  $m_s-m_{s-1}=n_s$ and $m_0=0$. Hence $1=\alpha _1=\dots =\alpha _{m_1}<\alpha _{m_1+1}=2\leq \dots \leq \alpha _N=\kappa$.

The explicit expression of the group operation $\cdot $ is determined by the Campbell-Hausdorff formula. It has the form 
\begin{equation*}
P\cdot Q= P+Q+\Q(P,Q) \quad \mbox{for all }\, P,Q \in \G\equiv \R^N,
\end{equation*} 
where $\Q=(\Q_1,\dots , \Q_N):\R^N\times \R^N \to \R^N$ and every $\Q_i$ is a homogeneous polynomial of degree $\alpha _i$ with respect to the intrinsic dilations of $\G$, i.e.
\begin{equation*}
\Q_i (\delta _\lambda P,\delta _\lambda Q)= \lambda ^{\alpha _i} \Q_i(P,Q) \quad \mbox{for all }\, P,Q \in \G \,  \mbox{ and }\, \lambda >0.
\end{equation*} 
We collect now further properties of $\Q$ following from Campbell-Hausdorff formula. First of all $\Q$ is antisymmetric, that is 
\begin{equation*}
\Q_i(P,Q)=-\Q_i(-P,-Q)  \quad \mbox{for all }\, P,Q \in \G .
\end{equation*}   
Moreover 
\[
\Q_i(P,0)=\Q_i(0,P)=0 \quad \mbox{and} \quad \Q_i(P,P)=\Q_i(P,-P)=0, 
\] 
and each $\Q_i (P,Q)$ depend only on the first components of $P$ and $Q$. More  precisely if $m_{i-1} < s\leq m_i $ and $i\geq 2$,
\begin{equation}\label{precedenti}
\begin{aligned}
\Q_1(P,Q) & =\dots =\Q_{m_1}(P,Q)=0\\
\Q_s (P,Q)& =\Q_s((p_1,\dots ,p_{m_{i-1}}), (q_1,\dots , q_{m_{i-1}})).
\end{aligned} 
\end{equation}

Observe also that $\G=\G^1\,  \oplus \, \G^2 \, \oplus \dots \oplus \, \G^\kappa $ where $\G^i=\exp (V_i)=\R^{n_i}$ is the $i^{th}$ layer of $\G$ and to write $P\in \G$ as $(P^1,\dots , P^\kappa)$ with $P^i\in \G^i$. According to this
\begin{equation}\label{opgr}
P\cdot Q= (P^1+Q^1,P^2+Q^2+\Q^2(P^1,Q^1),\dots ,P^\kappa +Q^\kappa+\Q^\kappa ((P^1,\dots , P^{\kappa-1} ),(Q^1,\dots ,Q^{\kappa-1}) ) 
\end{equation} 
for every $P=(P^1,\dots , P^\kappa )$, $Q=(Q^1,\dots ,Q^\kappa ) \in \G$. 
In particular $P^{-1}=(-P^1,\dots , -P^\kappa )$.

The norm of $\R^{n_s}$  is denoted with  the symbol $|\cdot |_{\R^{n_s}}$.

For any $P\in \G$ the intrinsic left translation $\tau _P:\G \to \G $ are  defined as
\begin{equation*}
Q \mapsto \tau _P Q := P\cdot Q=PQ.
\end{equation*}

A \emph{homogeneous norm} on $\G$ is a nonnegative function $P\mapsto \|P\|$ such that for all $P,Q\in \G$ and for all $\lambda \geq 0$
\begin{equation*}\label{defihomogeneous norm}
\begin{split}
\|P\|=0\quad &\text{if and only if }  P=0\\
\|\delta _\lambda P\|= \lambda \|P\|,& \qquad 
\|P \cdot Q\|\leq \|P\|+ \|Q\|.
\end{split}
\end{equation*}

There is a particular homogeneous norm defined in Theorem 5.1 in \cite{biblio8}, as following
\begin{equation}\label{epsilkappa}
\|(P^1,\dots , P^\kappa )\|:=\max_{s=1,\dots , \kappa } \big\{  \epsilon _s |P^s| ^{1/s}_{\R^{n_s}} \big\} \qquad \text{for all $(P^1,\dots , P^\kappa )\in \G$}
\end{equation} 
 with $\epsilon _1=1$, and $\epsilon _s \in(0,1]$ depending on the structure of the group for $s=2,\dots, \kappa$. This homogeneous norm is symmetric, i.e. $\|P\|=\|P^{-1}\|$ for all $P\in \G$ and such that $$ \|(P^1,0\dots , 0 )\|=|P^1|_{\R^{m_1}},$$ for all $P^1\in \R^{m_1}.$ 
 
 W.l.o.g. we consider the homogeneous norm defined in \eqref{epsilkappa}. Indeed from the fact that all the homogeneous norms are equivalent, all the estimates we will use hold with an arbitrary homogeneous norm. 

Given any homogeneous norm $\|\cdot \|$, it is possible to introduce a distance in $\G$ given by
\[
d(P,Q)=d(P^{-1} Q,0)= \|P^{-1} Q\| \qquad \text{for all $P,Q\in \G$}.   
\]
We observe that any  distance $d $ obtained in this way is always equivalent with the Carnot-Carath\'eodory's distance $d_{cc}$ of the group (see Proposition 5.1.4 and Theorem 5.2.8 \cite{biblio3}). 



The distance $d$ is well behaved with respect to left translations and dilations, i.e. for all $P,Q,Q' \in \G$ and $\lambda >0$,
\begin{equation*}
\begin{aligned}
d (P\cdot Q,P\cdot Q')=d(Q,Q'), \qquad d (\delta_\lambda Q,\delta_\lambda Q')=\lambda d(Q,Q')
\end{aligned}
\end{equation*}
Moreover, by Proposition 5.15.1 in \cite{biblio3}, for any bounded subset $\Omega \subset \G$ there exist positive constants $c_1=c_1(\Omega),c_2=c_2(\Omega)$ such that for all $P,Q \in \Omega$
\[
c_1|P-Q|_{\R^N} \leq d(P,Q) \leq c_2 |P-Q|_{\R^N}^{1/\kappa }
\] 
and, in particular, the topology induced on $\G$ by $d$
 is the Euclidean topology.

We also define the distance $dist_d$ between two set $\Omega_1, \Omega_2 \subset \G$ by putting
\[
dist_d(\Omega_1,\Omega_2) := \max \{ \sup_{Q' \in \Omega_2} d (\Omega_1, Q'),\, \sup_{Q\in \Omega_1} d (Q, \Omega_2) \},
\] 
where $d (\Omega_1, Q'):= \inf \{ d(Q,Q')\, :\, Q \in \Omega_1\}$.


The Hausdorff dimension of $(\G, d )$ as a metric space is  denoted \textit{homogeneous dimension} of $\G$ and it can be proved to be   the integer $\sum_{j=1}^{N}\alpha _j=\sum_{i=1}^{\kappa }i$ dim$V_i \geq N$ (see \cite{biblioMitchell}).

The subbundle of the tangent bundle $T\G$, spanned by the vector fields $X_1,\dots, X_{m_1}$ plays a particularly important role in the theory, and is called the \textit{horizontal bundle} $H\G$; the fibers of $H\G$ are
\[
H\G_P=\mbox{span} \{ X_1(P),\dots ,X_{m_1}(P)\}, \hspace{0,5 cm} P\in \G.  
\]
A sub Riemannian structure is defined on $\G$, endowing each fiber of $H\G$ with a scalar product $\langle\cdot ,\cdot \rangle_P$ and a norm $|\cdot |_P$ making the basis $X_1(P),\dots ,X_{m_1}(P)$ an orthonormal basis. Hence, if $v=\sum_{i=1}^{m_1}v_i X_i(P)=v^1$ and $w=\sum_{i=1}^{m_1}w_i X_i(P)=w^1$ are in  $H\G$, then $\langle v,w \rangle_P:=\sum_{i=1}^{m_1} v_i w_i$ and $|v|_P^2:=\langle v,v \rangle_P$. We will write, with abuse of notation, $\langle\cdot ,\cdot \rangle$  meaning $\langle\cdot ,\cdot \rangle_P$ and $|\cdot |$ meaning $|\cdot |_P$.

The sections of $H\G$ are called \textit{horizontal sections}, a vector of $H\G_P$ is an \textit{horizontal vector} while any vector in $T\G _P$ that is not horizontal is a vertical vector. 

The Haar measure  of the group $\G=\R^N$ is the Lebesgue measure $d\mathcal{L}^N$. It is left (and right) invariant.
Various Lebesgue spaces on $\G$ are meant always with respect to the measure  $d\mathcal{L}^N$ and are denoted as $\mathcal L^p(\G)$.

\subsection{$\C^1_\G$ functions, $\G$-regular surfaces, Caccioppoli sets} (See \cite{biblio9, biblioLIBROSC}). 
In \cite {biblio11} Pansu introduced an appropriate notion of differentiability for functions acting between Carnot groups. We recall this definition in the particular instance that is relevant  here. 

Let $\mcal U$ be an open subset of a Carnot group $\G$.  A function $f:\mathcal U\to \R^k$ is Pansu differentiable or more simply P-differentiable in $A_0\in \mathcal U$ if there is a homogeneous homomorphism 
\[
d_\mathbf Pf(A_0): \G\to \R^k,
\]
the Pansu differential of $f$ in $A_0$, such that, for $B\in \mathcal U$, 
\[
\lim_{r\to 0^+}\sup_{0<\Vert A_0^{-1}B\Vert<r}\frac{|f(B)-f(A_0)- d_\mathbf Pf(A_0)(A_0^{-1}B)|_{\R^k}}{\Vert A_0^{-1}B\Vert}= 0.
\]
Saying that $d_\mathbf Pf(A_0)$ is a  homogeneous homomorphism we mean that $d_\mathbf Pf(A_0): \G\to \R^k$ is a group homomorphism and also that $d_\mathbf Pf(A_0)(\delta_\lambda B)=\lambda d_\mathbf Pf(A_0)(B)$ for all $B\in \G$  and $\lambda \geq 0$.

Observe that, later on in Definition \ref{d3.2.1}, we give a different notion of differentiability for functions acting between subgroups of a Carnot group and we reserve the notation $df$ or $df(A_0)$ for that differential. 

We denote  $\C^1_\G (\mcal U ,\R^k )$  the set of functions $f:\mathcal U\to \R^k$ that are P-differentiable in each $A\in \mathcal U$ and such that $d_\mathbf Pf(A)$ depends continuously on $A$. 

It can be proved that $f=(f_1,\dots , f_k)\in \C^1_\G (\mcal U ,\R^k )$ if and only if the distributional horizontal derivatives  
$X_if_j $,  for $i=1\dots, m_1$, $j=1,\dots, k$,
are continuous in $\mcal U $.  Remember that $\C^1(\mcal U, \R ) \subset \C^1_\G (\mcal U, \R)$ with strict inclusion whenever $\G$ is not abelian (see Remark 6 in \cite{biblio6}).

The \emph{horizontal Jacobian} (or the \emph{horizontal gradient} if $k=1$) of $f:\mcal U \to \R^k$ in $A\in \mathcal U$ is the matrix
\[
 \nabla_\G f(A):=\left[X_if_j(A)\right] _{i=1\dots m_1, j=1\dots k}
\]
when the partial derivatives $X_if_j$ exist. Hence $f=(f_1,\dots , f_k)\in \mathbb C^1_\G (\mcal U ,\R^k )$ if and only if its horizontal Jacobian exists and is continuous in $\mathcal U$. 
 The \emph{horizontal divergence} of $\phi:=(\phi _1,\dots , \phi _{m_1}):\mcal U\to \R^{m_1}$  is defined as 
\begin{equation*}
 \mbox{div}_\G \phi := \sum_{j=1}^{m_1} X_j\phi _j
\end{equation*}
if $X_j\phi _j$ exist for  $j=1,\dots ,m_1$.

The following proposition shows that the P-differential of  a P-differentiable map $f$ is represented by horizontal gradient $\nabla_\G f$:
\begin{prop}\label{propP-differential2}
If $f:\mathcal{U} \subset \G \to \R$ is P-differentiable at a point $P$ and $d_\mathbf Pf(P)$ is P-differential of $f$ at $P$, then
\begin{equation*}
d_\mathbf Pf(P)(Q)=
 \nabla_\G f(P) Q^1, \quad \text{for all } \, Q=(Q^1,\dots ,Q^\kappa ) \in \mathcal{U}.
\end{equation*}
\end{prop}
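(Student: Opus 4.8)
Write $L:=d_{\mathbf P}f(P)$. By hypothesis $L:\G\to\R$ is a group homomorphism which is homogeneous of degree one, i.e. $L(\delta_\lambda Q)=\lambda L(Q)$ for all $\lambda\ge0$. The plan is to show, using only the structural facts recorded above, that such an $L$ is ``horizontal and linear'', namely $L(Q)=\sum_{i=1}^{m_1}a_iQ^1_i$ for a fixed vector $a\in\R^{m_1}$, and then to identify $a_i$ with $X_if(P)$ by testing the definition of P-differentiability along the horizontal directions.

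\emph{Step 1: $L$ vanishes on each higher layer.} Since $L$ takes values in the abelian group $(\R,+)$, it annihilates every commutator and hence the whole derived subgroup, which in a stratified group is $\exp(V_2\oplus\cdots\oplus V_\kappa)$; in particular $L(0,\dots,0,Q^s,0,\dots,0)=0$ for every $s\ge2$ and every $Q^s\in\R^{n_s}$, because such a point equals $\exp(Q^s)$ with $Q^s\in V_s$. (If one prefers to avoid invoking the derived subgroup: restricting $L$ to the one-parameter subgroup $t\mapsto\exp(tv)$ with $v\in V_s$ gives an additive function of $t$ which by homogeneity equals $t^{1/s}L(\exp v)$ for $t>0$; comparing the values at $t=1$ and $t=2$ forces $L(\exp v)=0$, since $2\ne2^{1/s}$ when $s\ge2$.)

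\emph{Step 2: $L$ depends only on $Q^1$ and is linear there.} Using the layerwise product formula \eqref{opgr}, the identities $\Q_i(\cdot,0)=\Q_i(0,\cdot)=0$ and the triangular dependence \eqref{precedenti}, one peels off one layer at a time to write
\[
Q=(Q^1,0,\dots,0)\cdot(0,R^2,0,\dots,0)\cdot(0,0,R^3,0,\dots,0)\cdots(0,\dots,0,R^\kappa)
\]
for suitable $R^s\in\R^{n_s}$. Applying $L$ and Step 1 gives $L(Q)=\ell(Q^1)$ where $\ell(Q^1):=L(Q^1,0,\dots,0)$. Applying the same principle to $(Q^1,0,\dots,0)\cdot({Q'}^1,0,\dots,0)=(Q^1+{Q'}^1,\Q^2(Q^1,{Q'}^1),\dots)$ and using that $L$ is a homomorphism shows $\ell(Q^1+{Q'}^1)=\ell(Q^1)+\ell({Q'}^1)$; since moreover $\ell(\lambda Q^1)=\lambda\ell(Q^1)$ for $\lambda\ge0$ (because $\delta_\lambda$ multiplies the first layer by $\lambda$), $\ell$ is additive and $\R$-homogeneous, hence $\R$-linear, so $\ell(Q^1)=\sum_{i=1}^{m_1}a_iQ^1_i$ for some $a\in\R^{m_1}$. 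Therefore $L(Q)=\sum_{i=1}^{m_1}a_iQ^1_i$ for every $Q\in\G$.

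\emph{Step 3: identification of the coefficients.} It remains to see $a_i=X_if(P)$ for $i\le m_1$. Test the definition of P-differentiability at $P$ along $B=P\cdot\exp(tX_i)$: since $\|P^{-1}B\|=\|\exp(tX_i)\|=|t|$ and $L(\exp(tX_i))=t\,a_i$ by Step 2, we obtain
\[
\lim_{t\to0}\left|\frac{f(P\exp(tX_i))-f(P)}{t}-a_i\right|=0,
\]
i.e. the derivative of $f$ along the integral curve of $X_i$ through $P$ exists and equals $a_i$, which is precisely $X_if(P)$. Hence $a=\nabla_\G f(P)$ and $d_{\mathbf P}f(P)(Q)=\nabla_\G f(P)\,Q^1$. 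The only genuinely non-routine point is Steps 1--2, where the clash between the degree-one homogeneity of $d_{\mathbf P}f(P)$ and the higher homogeneities of the non-horizontal layers is what forces the differential into this normal form; Step 3 is then a direct unwinding of the definitions.
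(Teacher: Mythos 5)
The paper states this proposition without proof, treating it as a standard fact about Pansu differentials, so there is no in-text argument to compare your attempt against. Your proof is correct and complete. The algebraic core (Steps 1--2) is essentially the same mechanism the paper \emph{does} spell out later in the proof of Proposition~\ref{p2.5} for intrinsic linear maps into a horizontal subgroup: one plays the degree-one scaling $L(\delta_\lambda Q)=\lambda L(Q)$ against the degree-$s$ scaling of the $s$-th stratum (via $A_s\cdot A_s=\delta_{2^{1/s}}A_s$) to force $L$ to vanish on $V_s$ for $s\ge2$, then uses additivity together with homogeneity on the commutative first layer to obtain Euclidean linearity. Of the two variants you offer for Step~1, the one-parameter-subgroup comparison is preferable since it is entirely self-contained and mirrors what the paper does in Proposition~\ref{p2.5}; the derived-subgroup argument is also valid but relies on the identification of the commutator subgroup of $\G$ with $\exp(V_2\oplus\cdots\oplus V_\kappa)$, a fact the paper never invokes. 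Step~3, identifying $a_i$ with $X_if(P)$ by restricting the Pansu limit to $B=P\exp(tX_i)$ and noting $\|\exp(tX_i)\|=|t|$ and $L(\exp(tX_i))=t\,a_i$, is a correct direct unwinding of the definition.
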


In the setting of Carnot groups, there is a natural definition of bounded variation functions and of finite
perimeter sets (see \cite{biblioGARN} or \cite{biblioLIBROSC} and the bibliography therein).

We say that $f:\mcal U \to \R $ is of bounded $\G$-variation in an open set $\mcal U \subset \G$ and we write $f\in BV_\G(\mcal U )$, if $f\in \mathcal L^1(\mcal U )$ and
\[
\| \nabla _\G f \| (\mcal U ):= \sup \Bigl\{ \int _{\mcal U} f \, \mbox{div}_\G\phi \, d\mathcal{L}^N : \phi \in \C^1_c (\mcal U , H\G), |\phi (P)| \leq 1 \mbox{ for all } P \in \mcal U  \Bigl\} <+\infty .
\] 
The space $BV_{\G , loc }(\mcal U )$ is defined in the usual way.
A set $\mcal E\subset \G$ has locally finite $\G$-perimeter, or is a $\G$-Caccioppoli set, if $\chi_{\mcal E} \in BV_{\G , loc }(\G)$, where $\chi_{\mcal E}$ is the characteristic function of the set $\mcal E$. In this case the measure $\| \nabla _\G \chi_{\mcal E}\|$ is called the $\G$-perimeter measure of $\mcal E$ and is denoted by $|\partial \mcal E|_\G$.

Now we define co-abelian intrinsic submanifold as in Definition 3.3.4 \cite{biblioKOZHEVNIKOV}. Following the terminology of \cite{biblio24}, we call these objects $k$-codimensional $\G$-regular surfaces.
\begin{defi}
$S\subset \G$ is a \emph{$k$-codimensional $\G$-regular surface} if for every $P\in S$ there are a neighbourhood $\mcal U$ of $P$ and a function $f=(f_1,\dots , f_k)\in \mathbb{C}^1_\G(\mcal U,\R^k)$ such that
\[
S\cap \mcal U=\{ Q\in \mcal U : f(Q)=0 \}
\]
and $d_\mathbf P f(Q)$ is surjective, or equivalently if the $(k\times m_1)$ matrix $\nabla_\G f(Q)$ has rank $k$, with $k<m_1$, for all $Q\in \mcal U$.
\end{defi}

The class of $\G$-regular surfaces is different from the class of Euclidean regular surfaces. In \cite{biblio19}, the authors give an example of $\HH^1$-regular surfaces, in $\mathbb{H}^1$ identified with $\R^3$, that are (Euclidean) fractal sets. Conversely, there are continuously differentiable 2-submanifolds in $\R^3$ that are not $\HH^1$-regular surfaces (see \cite{biblio6} Remark 6.2 and  \cite{biblio1} Corollary 5.11).

Recall that a homogeneous subgroup $\W$ of $\G$ is a Lie subgroup such that $\delta _\lambda A\in \W$ for every $A\in \W$ and for all $\lambda >0$, we can give the following result about $\G$-regular surface:
\begin{theorem}[see \cite{biblioKOZHEVNIKOV}, Theorem 3.3.5]\label{biblioKOZHEVNIKOV}
Let $S\subset \G$ be a closed connected set. The following conditions are equivalent:
\begin{enumerate}
\item S is a $k$-codimensional $\G$-regular surface
\item S is Reifenberg vanishing flat with respect to a family of closed homogeneous subsets $\{ \W_P\, :\, P\in S\}$ such that $\W_P$ is a vertical subgroup of codimension k for some $P\in S$, i.e. for every relatively compact subset $S'\Subset S$ there is an increasing function $\beta :(0,\infty ) \to (0,\infty )$, with $\beta (t)\to 0^+$ when $t\to 0^+$, such that 
\begin{equation*}
\mbox{dist$_d$} \left( \mcal U(P,r)\cap S, \mcal U(P,r)\cap (P\cdot \W_P) \right) \leq \beta (r)r, \quad r>0
\end{equation*}
for any $P\in S'$.
\end{enumerate}

In particular, if $(1)$ or equivalently $(2)$ hold, i.e. $S$ is locally level set of a certain $f\in \mathbb{C}^1_\G(\G,\R^k)$ then
$$ \W_P= ker\left(d_\mathbf Pf(P)\right), \qquad \forall \, P \in S. $$ 

\end{theorem}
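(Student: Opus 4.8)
The plan is to establish the two implications separately, the identification $\W_P=\ker(d_{\mathbf P}f(P))$ falling out of each direction of the argument.

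For $(1)\Rightarrow(2)$ I would start from a local defining function, $S\cap\mcal U=\{f=0\}$ with $f\in\C^1_\G(\mcal U,\R^k)$ and $\nabla_\G f$ of rank $k$. By Proposition~\ref{propP-differential2} one has $d_{\mathbf P}f(P)(Q)=\nabla_\G f(P)Q^1$, so $\ker(d_{\mathbf P}f(P))$ is the preimage in $\G$ of $\ker\bigl(\nabla_\G f(P)\colon\R^{m_1}\to\R^k\bigr)$; since this kernel has dimension $m_1-k\ge 1$ and the preimage in addition contains all of $\G^2\oplus\dots\oplus\G^\kappa$, it is a homogeneous \emph{vertical} subgroup of codimension $k$, and it is independent of the chosen $f$ because $\nabla_\G f(P)$ is determined on $S$ up to an invertible left factor. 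The flatness estimate then splits in two. Given $Q\in S\cap\mcal U(P,r)$, put $V:=P^{-1}Q$, so $\|V\|<r$; from $f(P)=0$ and the (uniform on compacta) P-differentiability of $f$ at $P$ — which follows from the continuity of $A\mapsto d_{\mathbf P}f(A)$ — we get $0=f(PV)=\nabla_\G f(P)V^1+o(\|V\|)$, and, $\nabla_\G f(P)$ being bounded below transverse to its kernel, $d(Q,P\cdot\W_P)=d(V,\W_P)\le|V^1_\perp|\lesssim|\nabla_\G f(P)V^1|$, which is $\le\beta_0(r)\,r$ for a suitable modulus $\beta_0$. Conversely, given $V\in\W_P$ with $\|V\|<r$, the Implicit Function Theorem of \cite{biblio7}, applied with constants uniform over a compact $S'\Subset S$ (by uniform continuity of $\nabla_\G f$), produces $Q\in S$ with $d(Q,PV)\le\beta_0(r)\,r$. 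Taking the maximum of the two bounds and using compactness of $S'$ to render $\beta_0$ independent of $P\in S'$ yields (2), with $\W_P=\ker(d_{\mathbf P}f(P))$.

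For $(2)\Rightarrow(1)$ I would first use the connectedness of $S$ and the existence of a single modulus $\beta$ to upgrade \lq some $\W_P$ is vertical of codimension $k$\rq\ to \lq every $\W_P$ is\rq: vanishing flatness forces $P\mapsto\W_P$ to vary continuously in $\mathrm{dist}_d$, and homogeneous subgroups close in $\mathrm{dist}_d$ on the unit ball are of the same type, so $\{P\in S:\W_P\text{ vertical of codim }k\}$ is open and closed in $S$. Fix $P_0\in S$ and pick a horizontal $k$-dimensional homogeneous subgroup $\M$ complementary to $\W_{P_0}$ (possible precisely because $\W_{P_0}$ is vertical of codimension $k$). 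Using the flatness at every scale one shows that near $P_0$ the set $S$ is an intrinsic left graph $\graph{\phi}$ of a continuous $\phi\colon\E\subset\W_{P_0}\to\M$; the \emph{vanishing} of $\beta$ is what turns this into uniform intrinsic differentiability of $\phi$, the intrinsic difference quotients converging, uniformly in the base point, to the intrinsic linear map whose graph is the blow-up $\W_P$. Theorem~\ref{teo4.1} then gives that $\graph{\phi}$, hence $S$ locally, is a $k$-codimensional $\G$-regular surface; feeding the resulting defining function back into the computation of $(1)\Rightarrow(2)$ identifies its kernel with $\W_P$, which also settles the \lq in particular\rq\ clause.

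I expect the main obstacle to be the step in $(2)\Rightarrow(1)$ that promotes mere vanishing flatness to \emph{uniform} intrinsic differentiability of the graph map $\phi$: one must produce a first-order intrinsic Taylor expansion of $\phi$ along the intrinsic translations of $\W_{P_0}$ — which are non-commutative as soon as $\G$ has step $\ge 2$ — with a remainder uniform in the base point, and show that the candidate intrinsic differential depends continuously on that point, since only then can Theorem~\ref{teo4.1} be invoked. By contrast, the analogous point in $(1)\Rightarrow(2)$, namely extracting constants uniform in $P\in S'$ in the Implicit Function Theorem and in the coercivity of $\nabla_\G f(P)$, is routine once $\G$ is identified with $\R^N$, the uniformity coming from compactness of $S'$ and continuity of $\nabla_\G f$.
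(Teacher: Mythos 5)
The paper does not prove this theorem; it is stated as a citation to Kozhevnikov's thesis (Theorem~3.3.5 in \cite{biblioKOZHEVNIKOV}) and subsequently used as a black box, so there is no in-paper proof against which to compare your argument.

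Taken on its own, your sketch of $(1)\Rightarrow(2)$ is structurally right — identifying $\W_P$ with $\ker(d_{\mathbf P}f(P))$, noting via Proposition~\ref{propP-differential2} that this kernel contains $\G^2\oplus\cdots\oplus\G^\kappa$ and is therefore a vertical subgroup of codimension~$k$, then getting the two halves of the flatness estimate from P-differentiability of $f$ and the Implicit Function Theorem with constants made uniform by compactness. One small imprecision: $d(V,\W_P)\le |V^1_\perp|$ is not literally true. Taking $W:=(V^1-V^1_\perp,V^2,\dots,V^\kappa)\in\W_P$, the element $W^{-1}V$ has horizontal part $V^1_\perp$ but in general nonzero higher layers generated by $\Q$, contributing terms of order $|V^1_\perp|^{1/s}\|V\|^{(s-1)/s}$; these are still $o(r)$ under your hypotheses, so the conclusion survives, but the bookkeeping should be made explicit. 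For $(2)\Rightarrow(1)$ you correctly flag the hard step: converting the Reifenberg flatness, which is a Hausdorff-distance statement about the rescaled sets $\delta_{1/r}(P^{-1}S)$, into the sup-norm estimate required by Definition~\ref{Ndef1.1} and Proposition~\ref{prop1.1.1}, together with the identification of $\W_P$ as the graph of the candidate intrinsic differential $d\phi_{\Phi^{-1}(P)}$. This requires showing that, for $P$ near $P_0$, $\W_P$ is complementary to the fixed horizontal $\M$ (so that it is an intrinsic graph over $\W_{P_0}$), and transferring the set estimate to a function estimate via the intrinsic Lipschitz continuity of $\phi$ — plausible but genuinely the crux, and not yet an argument. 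It would also strengthen the writeup to observe explicitly that vanishing flatness at small scales \emph{uniquely} determines the homogeneous set $\W_P$ (since homogeneity makes it determined by its small-scale behaviour); this is what justifies both the continuity of $P\mapsto\W_P$ and, once $(1)$ is established, the forced identification $\W_P=\ker(d_{\mathbf P}f(P))$.
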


%


\subsection{Complementary subgroups and graphs} 
\begin{defi} We say that $\W$ and $\M$ are \emph{complementary subgroups in $\G$} if 
$\W$ and $\M$ are homogeneous subgroups of $\G$ such that  $\W \cap \M= \{ 0 \}$ and  $$\G=\W\cdot \M.$$  By this we mean that for every $P\in \G$ there are $P_\W\in \W$ and $P_\M \in \M$ such that $P=P_\W  P_\M$.
\end{defi}


The elements $P_\W \in \W$ and $P_\M \in \M$ such that $P=P_\W \cdot P_\M$ are unique because of $\W \cap \M= \{ 0 \}$ and are denoted  components of $P$ along $\W$ and $\M$ or  projections of $P$ on $\W$ and $\M$.
The projection maps $\mathbf{P}_\W :\G \to \W$ and $\mathbf{P}_\M:\G \to \M$ defined
\[
\mathbf{P}_\W (P)=P_\W, \qquad \mathbf{P}_\M (P)=P_\M, \qquad \text{for all $P\in \G$}
\]
are polynomial functions (see Proposition 2.2.14 in \cite{biblio22}) if we identify $\G$ with $\R^N$, hence are $\C^\infty$. Nevertheless in general they are not  Lipschitz maps, when $\W$ and $\mathbb{M}$ are endowed with the restriction of the left invariant distance $d$ of $\G$ (see Example 2.2.15 in \cite{biblio22}). 

\begin{rem}\label{rem2.2.1}
The stratification of $\G$ induces a stratifications on the complementary subgroups $\W$ and $\M$. If $\G=\G^1\oplus \dots \oplus \G^\kappa$ then also $\W= \W^1\oplus \dots \oplus \W^\kappa$, $\M=\M^1\oplus \dots \oplus \M^\kappa$ and $\G^i =\W^i \oplus \M^i$. A subgroup is \emph{horizontal} if it is contained in the first layer $\G^1$. If $\M$ is horizontal then the complementary subgroup $\W$  is normal.
\end{rem}

\begin{prop}[see \cite{biblio2}, Proposition 3.2]
If $\W$ and $\M$ are complementary subgroups in $\G$ there is $c_0=c_0(\W , \M)\in (0,1)$ such that for each  $P_\W \in \W$ and $P_\M \in \M$
\begin{equation}\label{c_0}
c_0(\| P_\W \|+\|P_\M \|)\leq \| P_\W  P_\M \| \leq \| P_\W \|+\|P_\M \|
\end{equation}
\end{prop}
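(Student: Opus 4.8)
The upper bound is immediate: it is just the triangle inequality for the homogeneous norm, $\|P_\W P_\M\|\le\|P_\W\|+\|P_\M\|$, which holds for any two elements of $\G$ and in particular for $P_\W\in\W$, $P_\M\in\M$. So all the content is in producing a positive constant $c_0=c_0(\W,\M)<1$ for the lower bound. The plan is to argue by homogeneity and compactness. First I would reduce to the unit sphere: by the dilation identity $\delta_\lambda(P_\W P_\M)=(\delta_\lambda P_\W)(\delta_\lambda P_\M)$ (using that $\delta_\lambda$ is an automorphism and that $\W,\M$ are homogeneous, hence $\delta_\lambda$-invariant) together with $\|\delta_\lambda P\|=\lambda\|P\|$, the quantity
\[
\frac{\|P_\W P_\M\|}{\|P_\W\|+\|P_\M\|}
\]
is invariant under simultaneously dilating $P_\W$ and $P_\M$. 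Hence it suffices to bound it from below on the set $K=\{(P_\W,P_\M)\in\W\times\M:\ \|P_\W\|+\|P_\M\|=1\}$, and then set $c_0$ to be the infimum over $K$.

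Next I would show this infimum is attained and is strictly positive. The map $(P_\W,P_\M)\mapsto\|P_\W P_\M\|$ is continuous on $\W\times\M$ (the group operation is polynomial, the norm is continuous), and $K$ is a closed bounded subset of $\R^N\times\R^N$ — here I use that $d$ induces the Euclidean topology and that on bounded sets the homogeneous norm is comparable to a power of the Euclidean norm, so $\|P_\W\|+\|P_\M\|=1$ forces $(P_\W,P_\M)$ to lie in a Euclidean-bounded set — hence $K$ is compact and the infimum is a minimum, attained at some $(\bar P_\W,\bar P_\M)$. If this minimum were $0$ we would have $\bar P_\W\bar P_\M=0$, i.e. $\bar P_\W=\bar P_\M^{-1}$; but $\bar P_\M^{-1}\in\M$ (subgroup) while $\bar P_\W\in\W$, so $\bar P_\W\in\W\cap\M=\{0\}$, forcing $\bar P_\W=\bar P_\M=0$, which contradicts $\|\bar P_\W\|+\|\bar P_\M\|=1$. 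Therefore $c_0:=\min_K\|P_\W P_\M\|>0$, and $c_0\le 1$ is clear from the already-proven upper bound (e.g. take $P_\M=0$, $\|P_\W\|=1$). This $c_0$ depends only on $\W$ and $\M$ through the set $K$, as required.

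The main (and essentially only) obstacle is the compactness step: one must be careful that the sphere $\{\|P_\W\|+\|P_\M\|=1\}$ in the homogeneous norm is genuinely compact in $\W\times\M$. This is where the facts recorded just above in the excerpt — that the $d$-topology coincides with the Euclidean topology, and that $c_1|P-Q|_{\R^N}\le d(P,Q)\le c_2|P-Q|_{\R^N}^{1/\kappa}$ on bounded sets — do the work, together with the elementary observation that $\W$ and $\M$ are closed subsets of $\R^N$ (they are Lie subgroups). Everything else is softer: continuity of the polynomial group law and the use of $\W\cap\M=\{0\}$ to rule out a zero of the numerator on the sphere.
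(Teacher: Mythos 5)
Your proof is correct. The paper does not contain its own argument for this proposition---it is stated as a citation to \cite{biblio2}, Proposition 3.2---and the homogeneity-plus-compactness argument you give (reduce by dilations to the homogeneous sphere $\|P_\W\|+\|P_\M\|=1$, use continuity and compactness to attain the infimum, and use $\W\cap\M=\{0\}$ to exclude a zero minimum) is exactly the standard route taken there.
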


\begin{defi}
 We say that $S\subset \G$ is a \emph{left intrinsic graph} or more simply a \emph{intrinsic graph} if there are complementary subgroups $\W$ and $\M$  in $\G$ and  $\phi: \E \subset \W \to \M$ such that
\[
S=\graph {\phi} :=\{ A \phi (A):\, A\in \E \}.
\]
\end{defi}
Observe that, by uniqueness of the components along $\W$ and $\M$, if $S=\graph {\phi}$ then $\phi $ is uniquely determined among all functions from $\W$ to $\M$.  

\begin{prop}[see Proposition 2.2.18 in \cite{biblio22}]\label{P2.2.18} 
If $S$ is a intrinsic graph then, for all $\lambda >0$ and for all $Q\in \G$, $Q \cdot S$ and $\delta _\lambda S$ are intrinsic graphs. In particular, if $S=\graph {\phi}$ with $\phi :\E \subset \W \to \M$, then
\begin{enumerate}
\item
For all $\lambda >0$, \[\delta _\lambda \left(\graph {\phi}\right) =\graph {\phi _\lambda}\]  where
$\phi _\lambda :\delta _\lambda \E \subset \W \to \M $ and 
$ \phi _\lambda (A):= \delta _\lambda \phi (\delta _{1/\lambda }A)$,  for $A \in \delta _\lambda \E$.  
\item
For any $Q\in \G$, \[Q \cdot \graph {\phi} = \graph {\phi _Q }\] where
$\phi _Q : \E _Q \subset \W \to \M$ is defined as 
$\phi _Q (A):= (\mathbf P_\M (Q^{-1}A))^{-1} \phi( \mathbf P_\W (Q^{-1}A))$, for all $A \in \E_Q:=\{ A\, :\, \mathbf P_\W (Q^{-1}A)\in \E  \}$.  
\end{enumerate}
\end{prop}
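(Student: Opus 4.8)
The plan is to verify each of the two assertions by a direct computation using uniqueness of the splitting $\G=\W\cdot\M$, together with the facts that $\delta_\lambda$ is a group automorphism and $\tau_Q$ is a bijection, and that both $\W$ and $\M$ are $\delta_\lambda$-invariant homogeneous subgroups. Throughout I would use the notation $\mathbf P_\W,\mathbf P_\M$ for the (well-defined) projections onto $\W$ and $\M$, so that every $P\in\G$ is written uniquely as $P=\mathbf P_\W(P)\cdot\mathbf P_\M(P)$ with $\mathbf P_\W(P)\in\W$, $\mathbf P_\M(P)\in\M$.

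For part (1): take a generic point of $\delta_\lambda(\graph\phi)$, namely $\delta_\lambda\big(A\cdot\phi(A)\big)$ with $A\in\E$. Since $\delta_\lambda$ is a homomorphism this equals $\delta_\lambda A\cdot\delta_\lambda\phi(A)$. Now $\delta_\lambda A\in\W$ and $\delta_\lambda\phi(A)\in\M$ because $\W,\M$ are homogeneous subgroups, so this is already the $\W\cdot\M$ splitting of the point; setting $A':=\delta_\lambda A$, which ranges over $\delta_\lambda\E$ as $A$ ranges over $\E$, the $\M$-component is $\delta_\lambda\phi(\delta_{1/\lambda}A')=\phi_\lambda(A')$. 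This shows $\delta_\lambda(\graph\phi)\subseteq\graph{\phi_\lambda}$, and the reverse inclusion follows by applying the same argument to $\delta_{1/\lambda}$ (or simply by reversing the steps, each of which is an equivalence); hence $\delta_\lambda(\graph\phi)=\graph{\phi_\lambda}$. One should also note in passing that $\phi_\lambda$ is genuinely a map $\delta_\lambda\E\to\M$, which is immediate since $\delta_\lambda$ preserves $\M$.

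For part (2): take a generic point $Q\cdot\big(A\cdot\phi(A)\big)$ of $Q\cdot\graph\phi$ with $A\in\E$. The idea is to recognize $A\cdot\phi(A)$ as a point whose $Q$-translate must be re-split along $\W$ and $\M$; equivalently, write an arbitrary $B\in\W$ and ask when $Q^{-1}\cdot B'$ lies in $\graph\phi$ for $B'\in\G$ of the form $B'=B\cdot\phi_Q(B)$. Concretely, given $B\in\E_Q$, so that $\mathbf P_\W(Q^{-1}B)\in\E$, consider the point $B\cdot\phi_Q(B)$ with $\phi_Q(B)=\big(\mathbf P_\M(Q^{-1}B)\big)^{-1}\phi\big(\mathbf P_\W(Q^{-1}B)\big)$; the computation $Q^{-1}\cdot\big(B\cdot\phi_Q(B)\big)=\big(Q^{-1}B\big)\cdot\phi_Q(B)=\mathbf P_\W(Q^{-1}B)\cdot\mathbf P_\M(Q^{-1}B)\cdot\big(\mathbf P_\M(Q^{-1}B)\big)^{-1}\phi\big(\mathbf P_\W(Q^{-1}B)\big)=\mathbf P_\W(Q^{-1}B)\cdot\phi\big(\mathbf P_\W(Q^{-1}B)\big)$ shows this lies in $\graph\phi$, hence $B\cdot\phi_Q(B)\in Q\cdot\graph\phi$. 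Conversely every point of $Q\cdot\graph\phi$ arises this way with $B:=\mathbf P_\W\big(Q\cdot A\cdot\phi(A)\big)$, and uniqueness of the $\W\cdot\M$ splitting forces the $\M$-component to be exactly $\phi_Q(B)$, so $Q\cdot\graph\phi=\graph{\phi_Q}$; again one checks $\phi_Q$ maps into $\M$ by construction. Finally, the opening sentence of the proposition — that $Q\cdot S$ and $\delta_\lambda S$ are intrinsic graphs whenever $S$ is — is then just the conjunction of (1) and (2).

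The only genuinely delicate point is the bookkeeping in part (2): one must be careful that $\phi_Q$ is well-defined precisely on $\E_Q$ (not larger), and that the "conversely" direction really does recover the stated formula rather than some other expression — this is where uniqueness of the components along $\W$ and $\M$ is essential, and it is the step I would write out most carefully. Everything else is formal manipulation of the group law together with $\delta_\lambda$-invariance of $\W$ and $\M$.
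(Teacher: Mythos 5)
The paper states this proposition with only a reference to Proposition 2.2.18 of \cite{biblio22} and supplies no proof of its own, so there is nothing internal to compare against; you have in effect filled in a deferred proof. Your argument is correct and is the natural direct verification: part (1) by $\delta_\lambda$-invariance of $\W$ and $\M$ together with $\delta_\lambda$ being an automorphism, and part (2) by re-splitting the translated point along $\W\cdot\M$ and invoking uniqueness of the components.

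On the point you yourself flag as delicate, the converse inclusion in (2), the tightest way to write it is: take $P=Q\cdot A\cdot\phi(A)\in Q\cdot\graph{\phi}$ with $A\in\E$, set $B:=\mathbf{P}_\W(P)$ and $M:=\mathbf{P}_\M(P)$, so $Q^{-1}P = Q^{-1}B\cdot M = \mathbf{P}_\W(Q^{-1}B)\cdot\big(\mathbf{P}_\M(Q^{-1}B)\,M\big)$. Since $\mathbf{P}_\M(Q^{-1}B)\,M\in\M$ and $Q^{-1}P=A\cdot\phi(A)$ is already split, uniqueness gives $\mathbf{P}_\W(Q^{-1}B)=A\in\E$ (hence $B\in\E_Q$) and $M=\big(\mathbf{P}_\M(Q^{-1}B)\big)^{-1}\phi(A)=\phi_Q(B)$, which is exactly the stated formula. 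Writing this out makes it clear that both the domain identification $B\in\E_Q$ and the value $\phi_Q(B)$ come from the single uniqueness step, rather than needing separate verification.
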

The following notion of intrinsic Lipschitz function appeared for the first time in \cite{biblio6} and was studied, more diffusely, in \cite{biblio17, biblio27, biblio21, biblio22,  biblio24, biblioPROF}. Intrinsic Lipschitz functions play the same role as Lipschitz functions in Euclidean context. 
\begin{defi}
Let $\W ,\M$ be complementary subgroups in $\G$, $\phi : \E \subset \W \to \M$. We say that $\phi $ is \emph{intrinsic $C_L$-Lipschitz  in $\E$}, or simply intrinsic Lipschitz, if there is $C_L\geq 0$ such that
\[
\|\mathbf{P}_\M (Q^{-1} Q') \|  \leq C_L  \|\mathbf{P}_\W (Q^{-1} Q')\|,  \qquad \text{for all $Q,Q' \in \graph {\phi} $.}
\]
$\phi : \E \to \M$ is locally intrinsic Lipschitz in $\E$ if $\phi $ is intrinsic Lipschitz in $\E '$ for every $\E ' \Subset \E$. 
\end{defi}

\begin{rem} \label{lip0}
In this paper we are interested mainly in the special case  when $\M$ is a horizontal subgroup and consequently  $\W$ is a normal subgroup. 
Under these assumptions, for all $P= A\phi (A),Q=B \phi (B) \in \graph {\phi} $ we have 
\[\mathbf{P}_\M (P^{-1} Q)= \phi (A)^{-1}\phi(B),\quad  \mathbf{P}_\W (P^{-1} Q)=\phi (A)^{-1} A^{-1}B\phi (A).
\]
 Hence, if $\M$ is a horizontal subgroup, $\phi : \E \subset \W \to \M$ is intrinsic Lipschitz if 
\[
\|\phi (A)^{-1}\phi(B)\| \leq  C_L \|\phi (A)^{-1} A^{-1}B\phi (A) \| \qquad \text{for all  $A,B \in \E$.}
\]
Moreover,  if $\phi$ is intrinsic Lipschitz then  $\|\phi (A)^{-1} A^{-1}B\phi (A) \|$ is comparable with $\Vert{P^{-1} Q}\Vert$. Indeed from \eqref{c_0}
\begin{equation*}
\begin{split}
c_0\|\phi (A)^{-1} A^{-1}B\phi (A) \| &\leq \Vert{P^{-1} Q}\Vert\\ &\leq \|\phi (A)^{-1} A^{-1}B\phi (A) \|+\|\phi (A)^{-1}\phi(B)\|\\
&\leq (1+C_L) \|\phi (A)^{-1} A^{-1}B\phi (A) \|.
\end{split}
\end{equation*}
The quantity $\|\phi (A)^{-1} A^{-1}B\phi (A) \|$, or better a symmetrized version of it, can play the role of a $\phi$ dependent, quasi distance on $\E$.  See e.g. \cite{biblio1}. 
\end{rem}



In Euclidean spaces, i.e. when $\G$ is $\R^N$ and the group operation is the usual Euclidean sum of vectors, intrinsic Lipschitz functions are the same as Lipschitz functions. On the contrary, when $\G$ is a general non commutative Carnot group and $\W$ and $\M$ are complementary subgroups, the class of intrinsic Lipschitz functions from $\W$ to $\M$ is different from the class Lipschitz functions (see Example 2.3.9 in \cite{biblio21}). More precisely,  if $\phi :\W \to \M $ is intrinsic Lipschitz then in general does not exists a constant $C$ such that
\[
\|\phi (A)^{-1}\phi (B)\|\leq C \|A^{-1}B\| \quad \mbox{for } A,B\in \W
\]
not even locally. Nevertheless   the following weaker result holds true:
\begin{prop}[see Proposition 3.1.8 in \cite{biblio22}]\label{lip84} 
Let $\W ,\M$ be complementary subgroups in a step $\kappa$ Carnot group $\G$.  Let  $\phi :\E \subset \W \to \M$ be an intrinsic $C_L$-Lipschitz function. Then, for all $r>0$,
\begin{enumerate}
\item there is $C_1= C_1(\phi, r)>0$ such that 
\[
\|\phi (A)\| \leq C_1 \quad \text{ for all $A\in \E$ with $\|A\|\leq r$}
\]
\item there is $C_2= C_2(C_L, r)>0$ such that  $\phi$ is locally $1/\kappa $-H\"older continuous i.e. 
\begin{equation*}
\|\phi (A)^{-1}\phi (B)\|\leq C_2 \|A^{-1}B\|^{1/\kappa }\quad \text{for all $A, B$ with $\|A\| ,  \|B\| \leq r$.}
\end{equation*}
\end{enumerate}
\end{prop}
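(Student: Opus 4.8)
The plan is to establish (1) first and to deduce (2) from it, using throughout two facts: the ``almost‑splitting'' estimate \eqref{c_0}, i.e. $c_0\big(\|\mathbf{P}_\W(g)\|+\|\mathbf{P}_\M(g)\|\big)\le\|g\|\le\|\mathbf{P}_\W(g)\|+\|\mathbf{P}_\M(g)\|$ for all $g\in\G$, and the right‑$\M$‑invariance of $\mathbf{P}_\W$, namely $\mathbf{P}_\W(g\,m)=\mathbf{P}_\W(g)$ and $\mathbf{P}_\M(g\,m)=\mathbf{P}_\M(g)\,m$ whenever $g\in\G$ and $m\in\M$ (immediate from $g=\mathbf{P}_\W(g)\cdot\mathbf{P}_\M(g)$ and uniqueness of the components). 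Besides these, everything rests on the following quantitative lemma, which encodes the step $\kappa$: if $\Psi:\M\times\W\to\G$ is a polynomial map with $\Psi(\delta_\lambda m,\delta_\lambda c)=\delta_\lambda\Psi(m,c)$ for all $\lambda>0$ and $\Psi(m,0)=0$ for all $m\in\M$, then for every $R>0$ and every bounded $K\subset\M$ there is $C=C(R,K)>0$ with $\|\Psi(m,c)\|\le C\,\|c\|^{1/\kappa}$ for all $m\in K$ and all $c\in\W$ with $\|c\|\le R$. Both maps $(m,c)\mapsto\mathbf{P}_\W(m\cdot c)$ and $(m,c)\mapsto m\cdot c\cdot m^{-1}$ satisfy these hypotheses, as $\mathbf{P}_\W$ and the group operation are polynomial and commute with $\delta_\lambda$ (since $\W,\M$ are homogeneous), and both vanish at $c=0$ because $\mathbf{P}_\W(m)=0$ for $m\in\M$.

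Granting the lemma, the argument runs as follows. For (1) we may assume $\E\neq\emptyset$; fix $A_0\in\E$ and put $Q_0:=A_0\phi(A_0)$. For $A\in\E$ with $\|A\|\le r$ set $Q:=A\phi(A)$. Intrinsic $C_L$‑Lipschitzianity of $\phi$ applied to $Q_0,Q\in\graph\phi$ together with \eqref{c_0} gives $\|Q_0^{-1}Q\|\le(1+C_L)\|\mathbf{P}_\W(Q_0^{-1}Q)\|$; since $Q_0^{-1}Q=\big(\phi(A_0)^{-1}A_0^{-1}A\big)\cdot\phi(A)$ with $\phi(A)\in\M$, right‑$\M$‑invariance and a second use of \eqref{c_0} give
\[
\|\mathbf{P}_\W(Q_0^{-1}Q)\|=\|\mathbf{P}_\W\big(\phi(A_0)^{-1}A_0^{-1}A\big)\|\le c_0^{-1}\big(\|\phi(A_0)\|+\|A_0\|+\|A\|\big).
\]
Hence $\|Q_0^{-1}Q\|$ is bounded, and so is $\|\phi(A)\|=\|A^{-1}Q\|\le\|A\|+\|Q_0\|+\|Q_0^{-1}Q\|$, by a constant $C_1=C_1(\phi,r)$ (the dependence on $\phi$ entering only through $A_0,\phi(A_0)$). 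For (2), fix $A,B\in\E$ with $\|A\|,\|B\|\le r$, put $P:=A\phi(A)$, $Q:=B\phi(B)$ and $c:=A^{-1}B\in\W$, so $\|c\|=\|A^{-1}B\|\le 2r$. A direct computation gives $P^{-1}Q=\phi(A)^{-1}\,c\,\phi(B)$, whence $\mathbf{P}_\W(P^{-1}Q)=\mathbf{P}_\W(\phi(A)^{-1}c)$ and, by \eqref{c_0} and intrinsic Lipschitzianity, $\|P^{-1}Q\|\le(1+C_L)\|\mathbf{P}_\W(\phi(A)^{-1}c)\|$; one also checks the identity $\phi(A)^{-1}\phi(B)=\big(\phi(A)^{-1}c^{-1}\phi(A)\big)\cdot(P^{-1}Q)$. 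Therefore
\[
\|\phi(A)^{-1}\phi(B)\|\le\|\phi(A)^{-1}c^{-1}\phi(A)\|+(1+C_L)\|\mathbf{P}_\W(\phi(A)^{-1}c)\|,
\]
and the lemma — applied with $m=\phi(A)^{-1}$, which lies in the bounded set $\{m\in\M:\|m\|\le C_1\}$ by (1), and with $R=2r$ — bounds the right‑hand side by $C_2\,\|A^{-1}B\|^{1/\kappa}$, with $C_2$ depending on $C_L$, $r$ and $C_1$; after the usual normalization $0\in\E$, $\phi(0)=0$ (reached by a left translation, which does not change $C_L$, cf. Proposition \ref{P2.2.18}), this reduces to a dependence on $C_L$ and $r$ only.

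It remains to prove the lemma, which is the only substantial point. Write $\Psi(m,c)=\big(\Psi^1(m,c),\dots,\Psi^\kappa(m,c)\big)$ in layers. The dilation relation forces $\Psi^s$ to be weighted‑homogeneous of degree $s$ in the Euclidean coordinates of $m$ and $c$ jointly (a coordinate of the $j$‑th layer carrying weight $j$), and the hypothesis $\Psi(m,0)=0$ forces every monomial of $\Psi^s$ to contain at least one coordinate of $c$. Since a coordinate of $c$ lying in the $j$‑th layer has modulus $\le(\|c\|/\epsilon_j)^j$ by \eqref{epsilkappa}, each monomial of $\Psi^s$ is bounded, for $m\in K$, by $C(K)\,\|c\|^{\beta}$ with $\beta\ge1$; when $\|c\|\le1$ this is $\le C(K)\|c\|$, so $|\Psi^s(m,c)|\le C'(K)\|c\|$ and hence $\epsilon_s|\Psi^s(m,c)|^{1/s}\le C''(K)\|c\|^{1/s}\le C''(K)\|c\|^{1/\kappa}$, while for $1<\|c\|\le R$ all quantities are bounded by constants depending on $R$ and $K$ and $\|c\|^{1/\kappa}\ge1$, so the estimate persists. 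Taking the maximum over $s$ in \eqref{epsilkappa} gives $\|\Psi(m,c)\|\le C\|c\|^{1/\kappa}$. The crux, and the reason for the loss of exponent, is precisely this passage from a first‑power control ``$\lesssim\|c\|$'' of the $s$‑th layer to a $1/s$‑power control of the homogeneous norm: projections and conjugations between complementary subgroups are genuinely not Lipschitz, only $1/\kappa$‑Hölder, and this loss is sharp in a general step $\kappa$ Carnot group. Everything else is bookkeeping with \eqref{c_0} and the right‑$\M$‑invariance of $\mathbf{P}_\W$.
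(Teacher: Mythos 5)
Your argument is correct in its essential structure and follows the route one would expect for this kind of statement: reduce everything to a quantitative estimate for a homogeneous polynomial map that vanishes when its second argument is $0$, and feed the two relevant maps (the projection $(m,c)\mapsto\mathbf{P}_\W(m\cdot c)$ and the conjugation $(m,c)\mapsto mcm^{-1}$) through it, together with the bookkeeping supplied by \eqref{c_0} and the right-$\M$-invariance of $\mathbf{P}_\W$. The proof of the lemma is correct: weighted homogeneity of degree $s$, plus the requirement that each monomial contain a $c$-coordinate, gives $|\Psi^s(m,c)|\lesssim_K\|c\|$ for $\|c\|\le 1$ and $m\in K$, hence $\epsilon_s|\Psi^s|^{1/s}\lesssim\|c\|^{1/s}\le\|c\|^{1/\kappa}$, and the range $1<\|c\|\le R$ is handled by continuity. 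The identity $\phi(A)^{-1}\phi(B)=\big(\phi(A)^{-1}c^{-1}\phi(A)\big)\cdot(P^{-1}Q)$ and the chain of uses of \eqref{c_0} are also correct.

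The one point that is a bit too quick is the last sentence about the constant in (2). As written, your argument produces $C_2=C_2(C_L,r,C_1)$, where $C_1$ is the bound from (1) and genuinely enters through the lemma (the conjugating element $\phi(A)^{-1}$ must live in a bounded set). You then claim this reduces to $C_2(C_L,r)$ by the normalization $0\in\E$, $\phi(0)=0$. If the proposition is read as stated \emph{with} this normalization already built in, then indeed $C_1\le C_L r$ (take $B=0$ in the intrinsic Lipschitz inequality) and everything depends only on $(C_L,r)$. But as a \emph{reduction} for general $\phi$ it does not quite go through: left translation by $(A_0\phi(A_0))^{-1}$ replaces $A^{-1}B$ by $\tilde A^{-1}\tilde B=\phi(A_0)^{-1}A^{-1}B\,\phi(A_0)$ and replaces the constraint $\|A\|,\|B\|\le r$ by a constraint on $\|\tilde A\|,\|\tilde B\|$ whose radius again depends on $\phi$. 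Since conjugation by $\phi(A_0)$ is itself only $1/\kappa$-H\"older (indeed $\|\phi(A_0)^{-1}A^{-1}B\phi(A_0)\|\lesssim\|A^{-1}B\|^{1/\kappa}$ for small $\|A^{-1}B\|$ by \eqref{17FS}), translating the $1/\kappa$-estimate back would degrade the exponent. So either state the conclusion with $C_2=C_2(C_L,r,C_1(\phi,r))$, or insert the normalization $0\in\E$, $\phi(0)=0$ into the hypotheses before the estimate, rather than invoking it at the end as a harmless reduction.
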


\section{Intrinsic differentiability}
The notion of $P$-differentiability makes sense and can be introduced also for functions acting between complementary subgroups of a Carnot group $\G$. Nevertheless $P$-differentiability  does not seem to be the right  notion in this context.  Indeed P-differentiability  is a property that can be lost after a function is shifted as in Proposition \ref{P2.2.18}.  Here we recall a different notion of differentiability, the so called \emph{intrinsic differentiability} that is, by its very definition, invariant under translations. 
A function is intrinsic differentiable  if it is locally well approximated by   \emph{intrinsic linear} functions that are functions whose graph is a homogeneous subgroup in $\G$.


\begin{defi}
Let $\W$ and $\M$ be complementary subgroups in $\G$. Then $\ell:\W\to \M$ is  \emph{intrinsic linear} if $\ell$ is defined on all of $\W$ and if $\graph {\ell} $ is a homogeneous subgroup of $\G$.
\end{defi}
Intrinsic linear functions can be algebraically caracterized as follows.
\begin{prop}[see Propositions 3.1.3 and 3.1.6 in \cite{biblio21}]\label{biblio21ddd}
Let $\W$ and $\M$ be complementary subgroups in $\G$. Then $\ell:\W \to \M$ is intrinsic linear if and only if
\begin{equation*}
\begin{split}
 \ell(\delta _\lambda A)&=\delta _\lambda (\ell(A)), \qquad \text{for all $A\in \W$ and $\lambda \geq 0$}\\
\ell(AB) &=(\mathbf{P}_\HH (\ell(A)^{-1} B))^{-1} \ell(\mathbf{P}_\W(\ell(A)^{-1}B)), \qquad \mbox{for all } A,B\in \W .
\end{split}
\end{equation*}
Moreover any intrinsic linear function 
$\ell$ is a polynomial function 
and it is intrinsic Lipschitz with Lipschitz constant $C_L:=\sup\{ \| \ell(A)\|\, : \, \|A\| =1 \}$.
Note that $C_L < +\infty $ because $\ell$ is continuous. Moreover 
\begin{equation*}
\|\ell(A)\| \leq C_L\| A\|, \qquad \mbox{for all } A\in \W. 
\end{equation*}
In particular, if $\W$ is normal in $\G$ then $\ell:\W \to \M$ is  intrinsic linear if and only if
\begin{equation}
\begin{split}\label{lineareB}
 \ell(\delta _\lambda A)&=\delta _\lambda (\ell(A)), \qquad \text{for all $A\in \W$ and $\lambda \geq 0$}\\
\ell(AB) &=\ell(A) \ell\big(\ell(A)^{-1} B\ell(A)  \big), \qquad \mbox{for all } A,B\in \W .
\end{split}
\end{equation}

\end{prop}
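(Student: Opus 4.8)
\emph{Proof strategy.} My plan is to translate, clause by clause, the defining property ``$\graph{\ell}$ is a homogeneous subgroup of $\G$'' into the two displayed functional equations, using only: uniqueness of the splitting $P=\mathbf{P}_\W(P)\cdot\mathbf{P}_\M(P)$, the fact that each dilation $\delta_\lambda$ is a group automorphism carrying $\W$ into $\W$ and $\M$ into $\M$, and the behaviour of intrinsic graphs under left translation from Proposition \ref{P2.2.18}. First I would note that the identity $\ell(\delta_\lambda A)=\delta_\lambda\ell(A)$ is exactly closure of $\graph{\ell}$ under $\delta_\lambda$: since $\delta_\lambda\big(A\cdot\ell(A)\big)=(\delta_\lambda A)\cdot(\delta_\lambda\ell(A))$ with $\delta_\lambda A\in\W$ and $\delta_\lambda\ell(A)\in\M$, uniqueness says that this point belongs to $\graph{\ell}$, and then sits over $\delta_\lambda A$, if and only if $\delta_\lambda\ell(A)=\ell(\delta_\lambda A)$.

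For the second equation I would invoke Proposition \ref{P2.2.18}(2): fixing $A\in\W$ and setting $Q:=A\,\ell(A)$, a generic point of $\W$ written as $AB$ satisfies $Q^{-1}(AB)=\ell(A)^{-1}A^{-1}AB=\ell(A)^{-1}B$, so $\ell_Q(AB)=\big(\mathbf{P}_\M(\ell(A)^{-1}B)\big)^{-1}\ell\big(\mathbf{P}_\W(\ell(A)^{-1}B)\big)$, and as $AB$ ranges over $\W$ the statement ``$\ell_Q=\ell$ on $\W$'' is verbatim the second displayed equation. Now: if $\graph{\ell}$ is a homogeneous subgroup then $0\in\graph{\ell}$ forces $\ell(0)=0$, the first equation holds by the remark above, and for each $Q\in\graph{\ell}$ the coset $Q\cdot\graph{\ell}$ equals $\graph{\ell}$, whence by uniqueness of the defining map of an intrinsic graph $\ell_Q=\ell$, i.e. the second equation. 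Conversely, assuming both equations: the first gives $\delta_\lambda\graph{\ell}\subseteq\graph{\ell}$, and the second says precisely $\ell_Q=\ell$ for every $Q\in\graph{\ell}$, hence $Q\cdot\graph{\ell}=\graph{\ell_Q}=\graph{\ell}$; thus left translation by any $Q\in\graph{\ell}$ maps $\graph{\ell}$ bijectively onto itself, which gives at once $QQ'\in\graph{\ell}$ for $Q,Q'\in\graph{\ell}$ and, using $0\in\graph{\ell}$, also $Q^{-1}\in\graph{\ell}$. So $\graph{\ell}$ is a subgroup closed under dilations; that it is moreover a (closed) Lie subgroup, equivalently a linear subspace of $\R^N$, is part of the regularity discussion below. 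One can also bypass Proposition \ref{P2.2.18} here and check $PR\in\graph{\ell}$ for $P=A\ell(A)$, $R\in\graph{\ell}$ by hand, decomposing $\ell(A)R$ and reading off $\mathbf{P}_\W(PR)$, $\mathbf{P}_\M(PR)$ by uniqueness.

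For the ``moreover'' I would proceed thus. As $\ell$ is intrinsic linear, $\graph{\ell}$ is a connected homogeneous subgroup of the simply connected nilpotent group $\G\equiv\R^N$, hence in exponential coordinates a graded linear subspace of $\R^N$; moreover $\mathbf{P}_\W|_{\graph{\ell}}$ is a continuous bijection of $\graph{\ell}$ onto $\W$ with inverse $A\mapsto A\ell(A)$, so by invariance of domain it is a homeomorphism and $\ell=\mathbf{P}_\M\circ\big(\mathbf{P}_\W|_{\graph{\ell}}\big)^{-1}$ is continuous; solving the constraint ``the point of $\graph{\ell}$ lying over $A$'' inductively through the layers of the stratification --- using that the group law \eqref{opgr} and the projections are polynomial and triangular in the layers --- then shows $\ell$ is a polynomial map. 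Continuity of $\ell$ and compactness of $\{A\in\W:\|A\|=1\}$ give $C_L:=\sup\{\|\ell(A)\|:\|A\|=1\}<+\infty$, and applying the first equation to $A=\delta_{\|A\|}\big(\delta_{1/\|A\|}A\big)$ yields $\|\ell(A)\|\le C_L\|A\|$ for all $A\in\W$. Intrinsic $C_L$-Lipschitzianity is then immediate since $\graph{\ell}$ is a subgroup: for $Q,Q'\in\graph{\ell}$ one has $Q^{-1}Q'\in\graph{\ell}$, so $\mathbf{P}_\M(Q^{-1}Q')=\ell\big(\mathbf{P}_\W(Q^{-1}Q')\big)$ and the bound gives $\|\mathbf{P}_\M(Q^{-1}Q')\|\le C_L\|\mathbf{P}_\W(Q^{-1}Q')\|$. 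Finally, when $\W$ is normal the element $\ell(A)^{-1}B\ell(A)$ lies in $\W$, so $\mathbf{P}_\W(\ell(A)^{-1}B)=\ell(A)^{-1}B\ell(A)$ and $\mathbf{P}_\M(\ell(A)^{-1}B)=\ell(A)^{-1}$, and the second equation collapses to $\ell(AB)=\ell(A)\,\ell\big(\ell(A)^{-1}B\ell(A)\big)$, which is \eqref{lineareB}.

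Everything above except one point is formal bookkeeping with the splitting maps; the main obstacle is the regularity of $\ell$ --- showing that the algebraic conditions force $\graph{\ell}$ to be a closed Lie subgroup and $\ell$ to be polynomial (hence continuous, so that $C_L<+\infty$) --- which genuinely requires the inductive, layer-by-layer analysis of the graded structure rather than a one-line argument, and is the real content of Propositions 3.1.3 and 3.1.6 of \cite{biblio21}. A minor point needing care: in Proposition \ref{P2.2.18}(2) the map $\ell_Q$ is defined on all of $\W$ (because $\ell$ is), so ``$\ell_Q=\ell$'' is a genuine equality of maps on $\W$, not on a proper subset.
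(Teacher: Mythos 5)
The paper does not actually prove this proposition; it is quoted from Propositions 3.1.3 and 3.1.6 of Franchi--Marchi--Serapioni \cite{biblio21}, so there is no in-paper proof to compare against. Read on its own merits, your reconstruction is the natural one and its algebraic part is correct: the first functional equation is precisely dilation-invariance of $\graph{\ell}$ (with $\lambda=0$ giving $\ell(0)=0$), the second is precisely $\ell_Q=\ell$ for $Q=A\ell(A)$ via Proposition~\ref{P2.2.18}(2) and the uniqueness of the function defining an intrinsic graph, and together these show $\graph{\ell}$ is an abstract subgroup closed under dilations (and conversely). The specialisation to normal $\W$, the bound $\|\ell(A)\|\le C_L\|A\|$ by homogeneity, and the intrinsic Lipschitz estimate from $Q^{-1}Q'\in\graph{\ell}$ are all correct. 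You are right, and commendably explicit, that the remaining step --- upgrading from ``abstract subgroup closed under dilations'' to ``Lie subgroup'' (equivalently, showing that the two functional equations force $\ell$ to be continuous and in fact polynomial, which is also what makes $C_L<+\infty$) --- is the genuine content of the cited FMS propositions; your invariance-of-domain remark presupposes that $\graph{\ell}$ is already a submanifold, so it cannot by itself close the ``if'' direction, and the inductive layer-by-layer argument you allude to is indeed needed. A small notational point worth flagging: the paper's second displayed identity in the proposition writes $\mathbf{P}_\HH$, where $\HH$ is not in scope; it should read $\mathbf{P}_\M$, as you implicitly corrected.
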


We use intrinsic linear functions to define intrinsic differentiability as in the usual definition of differentiability.
\begin{defi}\label{d3.2.1}
Let $\W$ and $\M$  be complementary subgroups in $\G$ and let $\phi :\mathcal O \subset \W \to \M$ with $\mathcal O$ open in $\W$. For $A\in \mathcal O$, let $P:=A\cdot \phi (A)$ and $\phi _{P^{-1}}: \mathcal O _{P^{-1}} \subset \W \to \M$ be the shifted function defined in Proposition $\ref{P2.2.18}$. We say that $\phi$ is \emph{intrinsic differentiable in $A$} if the shifted function $\phi_{P^{-1}}$ is intrinsic differentiable in $0$, i.e. if there is a intrinsic linear $d\phi_A:\W\to \M$ such that
 \begin{equation}\label{3.0}
\lim_{r\to 0^+}\sup_{0<\|B\|<r}\frac{\| d\phi_{A} (B)^{-1} \phi _{P^{-1}} (B) \|}{\|B\|} =0.
\end{equation}
The function $d\phi_A$ is the \emph{intrinsic differential of $\phi $ at $A$}.
\end{defi}


\begin{rem} Definition \ref{d3.2.1} is a natural one because of the following observations.

\emph{(i)} If $\phi$ is intrinsic differentiable in $A\in \mathcal O$, there is a unique  intrinsic linear function $d\phi_A$ satisfying $\eqref{3.0}$.  Moreover $\phi$ is continuous at $A$. (See Theorem 3.2.8 and Proposition 3.2.3 in \cite{biblio21}).

\emph{(ii)} The notion of intrinsic differentiability is invariant under group translations. Precisely, let $P:=A\phi (A), Q:=B\phi (B)$, then $\phi $ is intrinsic differentiable in $A$ if and only if $\phi _{QP^{-1}} := (\phi _{P^{-1}})_{Q}$ is intrinsic differentiable in $B$.

\emph{(iii)} The analytic definition of intrinsic differentiability of Definition $\ref{d3.2.1}$ has an equivalent geometric formulation. Indeed intrinsic differentiability in one point is equivalent to the existence of a tangent subgroup to the graph (see \cite{biblio21}, Theorem 3.2.8). Let $\phi :\W \to \M$ be such that $\phi(0)=0$. We say  that an homogeneous subgroup $\T$ of $\G$ is a tangent subgroup to $\graph \phi$ in $0$ if
\begin{enumerate}
\item $\T$ is a complementary subgroup of $\M$
\item in any compact subset of $\G$
\begin{equation*}
\lim_{\lambda \to \infty } \delta _\lambda \left(\graph \phi\right)  =\T
\end{equation*}
in the sense of Hausdorff convergence.
\end{enumerate}
Moreover in \cite{biblio21}, Theorem 3.2.8 the authors show that $\phi$ is intrinsic differentiable in $0$ if and only if $\graph \phi$ has a tangent subgroup $\T$ in $0$ and in this case $\T=\graph {d\phi_0}$.
\end{rem}

In addition to pointwise intrinsic differentiability, we are interested in an appropriate  notion of continuously intrinsic differentiable functions. For functions acting between complementary subgroups, one possible way is to introduce a stronger, i.e. uniform, notion of intrinsic differentiability in the general setting of Definition \ref{d3.2.1}. 

\begin{defi}\label{Ndef1.1}
Let $\W$ and $\M$  be complementary subgroups in $\G$ and $\phi :\mathcal O \subset \W \to \M$ with $\mathcal O$ open in $\W$. For any $A\in \mathcal O$, let $P:=A\cdot \phi (A)$ and $\phi _{P^{-1}}: \mathcal O _{P^{-1}} \subset \W \to \M$ be the shifted function defined in Proposition $\ref{P2.2.18}$. We say that $\phi$ is \emph{uniformly intrinsic differentiable in $A_0\in \mathcal O$} or $\phi$ is \emph{u.i.d. in $A_0$} if  there exist a intrinsic linear function $d\phi_{A_0}: \W \to \M$ such that
\begin{equation}\label{3.0.1}
\lim_{r\to 0^+}\sup_{\|A_0^{-1}A\|<r}\sup_{0<\|B\|<r}\frac{\| d\phi_{A_0} (B)^{-1} \phi _{P^{-1}} (B) \|}{\|B\|} =0.
\end{equation}
Analogously, $\phi$ is u.i.d. in $\mathcal O$ if it is u.i.d. in every point of $\mathcal O$. 
\end{defi} 

\begin{rem} We recall that in Definition 3.16 in \cite{biblio2} the authors give another notion of uniformly intrinsic differentiable map. It is possible compare these two notions; indeed, for example, Proposition \ref{prop3.6cont} (3) is in the definition of u.i.d. for \cite{biblio2}. Moreover, it is clear, taking $A=A_0$ in \eqref{3.0.1}, that if $\phi$ is uniformly intrinsic differentiable in $A_0$ then it is intrinsic differentiable in $A_0$ and $d\phi_{A_0}$ is the intrinsic differential of $\phi $ at $A_0$ (that is the first point of the definition for \cite{biblio2}). 
\end{rem}
\medskip


From now on we restrict our setting  studying the notions of intrinsic differentiability and of uniform intrinsic differentiability for  functions $\phi: \W\to \HH$ when $\HH$ is a horizontal subgroup.  When $\HH$ is horizontal, $\W$ is always a normal subgroup since, as observed in Remark $\ref{rem2.2.1}$, it contains the whole strata $\G^2, \dots , \G^\kappa$.
In this case,   the more explicit form of the shifted function $\phi_{P^{-1}} $ allows  a more explicit form of equations \eqref{3.0} and \eqref{3.0.1}. 

First we observe that, when the target space is horizontal, intrinsic linear functions are Euclidean linear functions from the first layer of $\W$ to $\HH$. The analogous of the following proposition  is Proposition 3.23, \cite{biblio2} in the Heisenberg groups.




\begin{prop}\label{p2.5}
Let $\W$ and $\HH$ be complementary subgroups in $\G$ with $\HH$ horizontal.  Then a intrinsic linear function $\ell:\W \to \HH$ depends only on the variables in the first layer $\W^1:= \W\cap \G^1$ of $\W$. 
That is
\begin{equation}\label{lineare0}
\ell(A)=\ell(A^1,0,\dots,0), \qquad \text{ for all  $A=(A^1,\dots, A^\kappa)\in \W$}.
\end{equation}
Moreover there is $C_L\geq 0$ such that,  for all $A\in \W$, 
\begin{equation}\label{linea2.0}
\|\ell(A)\|\leq C_L \|(A^1,0,\dots,0)\|
\end{equation}
 and $\ell_{\vert \W^1}: \W^1\to \HH$ is Euclidean linear.

Finally if $k<m_1$ is the dimension of $\HH$ and if, without loss of generality, we assume that 
\[
\HH=\{P=(p_1,\dots,p_N): p_{k+1}=\dots =p_N=0\},\quad \W=\{P=(p_1,\dots,p_N): p_{1}=\dots =p_k=0\}\
\]  then there is a $k\times (m_1-k)$ matrix $\mathcal L$  such that 
\begin{equation}\label{DISSUdifferential}
\begin{split}
\ell(A)=\left(\mathcal L (a_{k+1},\dots,a_{m_1})^T, 0,\dots,0\right)
\end{split}
\end{equation}
  for all $A=(a_1,\dots,a_N)\in \W$ .
\end{prop}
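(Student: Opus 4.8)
The plan is to establish Proposition \ref{p2.5} by exploiting the algebraic characterization of intrinsic linear functions given in Proposition \ref{biblio21ddd}, specialized to the case where $\M=\HH$ is horizontal and hence $\W$ is normal. First I would use the homogeneity identity $\ell(\delta_\lambda A)=\delta_\lambda(\ell(A))$ together with the fact that $\HH\subset\G^1$, so that $\delta_\lambda$ acts on $\ell(A)$ simply by scalar multiplication by $\lambda$. Writing $A=(A^1,\dots,A^\kappa)$ with $A^i\in\W^i$ and recalling that the homogeneity weight of the $i$-th layer variable is $i$, the scaling $\ell(\delta_\lambda(A^1,\dots,A^\kappa))=\ell(\lambda A^1,\lambda^2 A^2,\dots,\lambda^\kappa A^\kappa)=\lambda\,\ell(A^1,\dots,A^\kappa)$ forces $\ell$ to be unaffected by the higher-weight variables: comparing the leading behaviour as $\lambda\to 0^+$ (or a homogeneity degree count on the polynomial $\ell$, which is polynomial by Proposition \ref{biblio21ddd}) shows every monomial in $\ell$ must have total weight $1$, hence can only involve the first-layer coordinates $a_{k+1},\dots,a_{m_1}$. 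This gives \eqref{lineare0}.

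Next, for \eqref{linea2.0}, I would just invoke the estimate $\|\ell(A)\|\le C_L\|A\|$ already recorded in Proposition \ref{biblio21ddd} with $C_L=\sup\{\|\ell(A)\|:\|A\|=1\}$, and combine it with \eqref{lineare0}: since $\ell(A)=\ell(A^1,0,\dots,0)$, we get $\|\ell(A)\|=\|\ell(A^1,0,\dots,0)\|\le C_L\|(A^1,0,\dots,0)\|$. For the linearity of $\ell_{|\W^1}$ on $\W^1$, I would feed the second identity in \eqref{lineareB}, namely $\ell(AB)=\ell(A)\,\ell(\ell(A)^{-1}B\ell(A))$, with $A,B\in\W^1$: because $\HH$ is horizontal, $\ell(A)\in\G^1$, and the group product of two first-layer elements is just the Euclidean sum (the polynomials $\Q_i$ vanish on the first layer, cf. \eqref{opgr} and \eqref{precedenti}); hence the conjugation $\ell(A)^{-1}B\ell(A)$ reduces to $B$ modulo higher strata, which by \eqref{lineare0} do not affect $\ell$, so $\ell(A+B)=\ell(A)+\ell(B)$ on $\W^1$, and combined with the $\delta_\lambda$-homogeneity (which on $\W^1$ is ordinary scalar multiplication) this yields $\R$-linearity of $\ell_{|\W^1}$.

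Finally, with the normalization $\HH=\{p_{k+1}=\dots=p_N=0\}$ and $\W=\{p_1=\dots=p_k=0\}$, so that $\W^1$ is coordinatized by $(a_{k+1},\dots,a_{m_1})$ and $\HH$ by $(p_1,\dots,p_k)$, the Euclidean linear map $\ell_{|\W^1}:\R^{m_1-k}\to\R^k$ is represented by a $k\times(m_1-k)$ matrix $\mathcal L$, which together with \eqref{lineare0} gives \eqref{DISSUdifferential}. I expect the only genuinely delicate point to be the homogeneity/degree argument for \eqref{lineare0}: one must be careful that $\ell$, although polynomial, could a priori contain monomials of weighted degree $1$ that mix a first-layer variable with nothing else — which is exactly what we want — but must rule out, e.g., a linear dependence on a weight-$2$ coordinate $a_j$ (which scales like $\lambda^2$, incompatible with the overall $\lambda^1$ scaling); this is immediate once one writes the scaling relation coordinate-by-coordinate, but it is the step where the stratification is really used. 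The conjugation-reduces-to-sum claim on the first layer is routine given \eqref{precedenti} but should be stated carefully since $\ell(A)^{-1}B\ell(A)$ a priori lands in all of $\W$, not just $\W^1$.
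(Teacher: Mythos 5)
Your proof is correct, but the route you take for \eqref{lineare0} is genuinely different from the paper's. The paper establishes \eqref{lineare0} by an iterative ``layer-peeling'' argument: first, for $A_\kappa = (0,\dots,0,P^\kappa)\in\W$ it combines the conjugation identity $\ell(A_\kappa)^{-1}A_\kappa\ell(A_\kappa)=A_\kappa$ (from \eqref{precedenti}), the factorization $\ell(A_\kappa A_\kappa)=2\ell(A_\kappa)$ from \eqref{lineareB}, and the observation $A_\kappa A_\kappa=\delta_{2^{1/\kappa}}A_\kappa$ to force $2\ell(A_\kappa)=2^{1/\kappa}\ell(A_\kappa)$, hence $\ell(A_\kappa)=0$; then it writes $(P^1,\dots,P^\kappa)=A_\kappa\cdot(P^1,\dots,P^{\kappa-1},0)$ and iterates on the $(\kappa-1)$-st layer, and so on. You instead invoke the polynomial nature of $\ell$ (recorded in Proposition \ref{biblio21ddd}) and observe that the dilation identity, read componentwise on the horizontal target $\HH$, makes each coordinate function of $\ell$ a weighted-homogeneous polynomial of degree one; since the weights are positive integers, such a polynomial can only be a Euclidean linear form in the weight-one coordinates. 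This degree count delivers \eqref{lineare0}, \eqref{DISSUdifferential}, and the Euclidean linearity of $\ell_{|\W^1}$ in one stroke, so your separate conjugation-based argument for linearity (which coincides with the paper's) is not even needed. The trade-off: the paper's argument is purely algebraic and proceeds from the multiplicativity identity \eqref{lineareB} alone, whereas yours is shorter but leans harder on the a priori polynomial structure of $\ell$. Your proof of \eqref{linea2.0} matches the paper's. Both proofs are valid.
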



\begin{proof}
In order to prove \eqref{lineare0}
first we prove that for all  $P=(P^1, \dots , P^\kappa )\in \W$, where $P^i\in \R^{n_i}$,
\begin{equation}\label{lineareA1}
\ell (P^1, \dots , P^\kappa )= \ell(P^1 ,  \dots , P^{\kappa -1},0).
\end{equation}
Let $A_\kappa :=(0,\dots ,0,P^\kappa )\in \W$. From $\eqref{precedenti}$ we have $\ell(A_\kappa)^{-1} \cdot A_\kappa \cdot \ell(A_\kappa) =A_\kappa$ and by $\eqref{lineareB}$ 
\begin{equation*}
\ell(A_\kappa \cdot A_\kappa)= \ell( A_\kappa) \cdot \ell\big(\ell(A_\kappa)^{-1}\cdot A_\kappa \cdot \ell(A_\kappa)  \big)
 =\ell( A_\kappa) \cdot  \ell( A_\kappa  )= 2\ell(A_\kappa).
\end{equation*}
Because $A_\kappa \cdot A_\kappa =(0,\dots ,0,2P^\kappa )=\delta _{2 ^{1/\kappa }}A_\kappa$, then 
$
2 \ell(A_\kappa ) = \ell(A_\kappa  \cdot A_\kappa )= \ell(\delta _{2 ^{1/\kappa }}A_\kappa  ) = 2 ^{1/\kappa } \ell(A_\kappa );
$
hence
\begin{equation}\label{lineareC}
\ell(A_\kappa)=0. 
\end{equation}
Because  $(P^1,  \dots , P^{\kappa})= A_\kappa \cdot (P^1 ,  \dots , P^{\kappa -1},0)$, from  $\eqref{lineareB}$ and $\eqref{lineareC}$ we get
\begin{equation*}
\begin{aligned}
\ell (P^1,  \dots , P^{\kappa})&=\ell\Big(A_\kappa  \cdot (P^1 ,  \dots , P^{\kappa -1},0)\Big)=
\ell(A_\kappa ) \cdot \ell\Big(\ell(A_\kappa)^{-1}  \cdot (P^1 ,  \dots , P^{\kappa -1},0) \cdot \ell(A_\kappa)  \Big)\\
& = \ell(P^1 ,  \dots , P^{\kappa -1},0),
\end{aligned}
\end{equation*}
and $\eqref{lineareA1}$ is proved.

In the next step we prove that 
\begin{equation}\label{lineare1}
\ell(P^1, \dots , P^\kappa )=\ell(P^1,\dots , P^{\kappa -2},0,0) 
\end{equation}
Let $A_{\kappa -1}:=(0,\dots , 0 ,P^{\kappa -1},0) \in \W$. From $\eqref{opgr}$, there is $\hat P^\kappa $, depending on $\ell(A_{\kappa -1})$ and $A_{\kappa -1}$, such that
\begin{equation*}
\ell(A_{\kappa -1})^{-1} \cdot A_{\kappa -1} \cdot \ell(A_{\kappa -1}) = (0,\dots , 0,P^{\kappa -1}, \hat P^\kappa).
\end{equation*}
From $\eqref{lineareB}$, $\eqref{lineareC}$ and the fact that $ (0,\dots , 0,P^{\kappa -1}, \hat P^\kappa) = (0,\dots , 0, \hat P^\kappa) \cdot A_{\kappa -1}$ we get
 \begin{equation*}
\begin{aligned}
\ell(0,\dots , 0,P^{\kappa -1}, \hat P^\kappa)&=\ell((0,\dots , 0, \hat P^\kappa)\cdot  A_{\kappa -1})\\
&=\ell(0,\dots , 0, \hat P^\kappa)\cdot  \ell\Big(\ell(0,\dots , 0, \hat P^\kappa)^{-1}  \cdot A_{\kappa -1} \cdot \ell(0,\dots , 0, \hat P^\kappa)  \Big)\\
&=\ell(A_{\kappa -1})
\end{aligned}
\end{equation*}
and consequently 
\begin{equation*}
\ell(A_{\kappa -1}\cdot A_{\kappa -1})=\ell(A_{\kappa -1})\cdot  \ell \Big( \ell(A_{\kappa -1})^{-1}\cdot  A_{\kappa -1} \cdot \ell(A_{\kappa -1}) \Big) =\ell(A_{\kappa -1}) \cdot \ell(A_{\kappa -1}).
\end{equation*}
Because $A_{\kappa -1}\cdot A_{\kappa -1} =(0,\dots ,0,2P^{\kappa -1},0 )=\delta _{2 ^{1/(\kappa -1) }}A_{\kappa -1}$   we have
\begin{equation*}
2 \ell(A_{\kappa -1}) = \ell(A_{\kappa -1})\cdot \ell(A_{\kappa -1})= \ell(A_{\kappa -1}\cdot A_{\kappa -1})= \ell(\delta _{2 ^{1/{(\kappa -1)}}}A_{\kappa -1}) = 2 ^{1/{(\kappa -1)}} \ell(A_{\kappa -1}).
\end{equation*}
Then
\begin{equation}\label{equationlineare4}
 \ell(A_{\kappa -1})=\ell(0,\dots , 0 ,P^{\kappa -1}, \hat P^{\kappa })=0.
\end{equation}
Because  $(P^1, \dots , P^\kappa )=(P^1,\dots , P^{\kappa -2},0,0)\cdot (0,\dots , 0 ,P^{\kappa -1}, \bar P^{\kappa })$ for appropriate $\bar P^{\kappa }$ and\\ $\ell(0,\dots , 0 ,P^{\kappa -1}, \bar P^{\kappa })=0,$ \footnote{ Because $(0,\dots , 0 ,P^{\kappa -1}, \bar P^{\kappa })=(0,\dots , 0 , \tilde P^{\kappa }) (0,\dots , 0 ,P^{\kappa -1}, \hat P^{\kappa })$ for some $\tilde P^{\kappa }$, from  $\eqref{lineareB}$, $\eqref{lineareC}$ and \eqref{equationlineare4} we get
\begin{equation*}
\begin{aligned}
\ell (0,\dots , 0 ,P^{\kappa -1}, \bar P^{\kappa })&=\ell\Big((0,\dots , 0 , \tilde P^{\kappa }) (0,\dots , 0 ,P^{\kappa -1}, \hat P^{\kappa })\Big)\\
&=\ell(0,\dots , 0, \tilde P^\kappa)\cdot  \ell\Big(\ell(0,\dots , 0, \tilde P^\kappa)^{-1}  \cdot (0,\dots , 0 ,P^{\kappa -1}, \hat P^{\kappa }) \cdot \ell(0,\dots , 0, \tilde P^\kappa)  \Big)\\
& = \ell (0,\dots , 0 ,P^{\kappa -1}, \hat P^{\kappa }) =0.
\end{aligned}
\end{equation*}} we obtain $\eqref{lineare1}$ from $\eqref{lineareB}$.
This procedure can be iterated to get \eqref{lineare0}.

Now it is easy to see that $\ell$ is Euclidean linear. Indeed for all $A,B\in \W$ and $\lambda>0$
\[
 \ell((\lambda A^1,0\dots,0))=\ell((\delta_\lambda A)^1,0\dots,0)=\ell (\delta_\lambda A)=\delta_\lambda \ell(A)=\lambda \ell(A^1,0\dots,0)
\]
and
\begin{equation*}
\begin{split}
& \ell(( A^1,0\dots,0)+(B^1,0\dots,0))= \ell( (AB)^1,0\dots,0)=\ell(AB)\\
 &=\ell(A)\ell\left(\ell(A)^{-1}B\ell(A)\right)\\
 &=\ell( A^1,0\dots,0)\ell\left((\ell(A)^{-1}B\ell(A))^{1},0,\dots,0)\right)\\
  &=\ell(A^1,0\dots,0)\ell(B^{1},0,\dots,0)=\ell(A^1,0\dots,0)+\ell(B^{1},0,\dots,0).
\end{split}
\end{equation*}

%
\end{proof}
Keeping in mind this special form of intrinsic linear functions we obtain the following special form of intrinsic differentiability. The reader can see Proposition 3.25 (ii) and Proposition 3.26 (ii) in \cite{biblio2} for the Heisenberg groups.
\begin{prop}\label{prop1.1.1}
 Let $\HH$ and $\W$ be complementary subgroups of $\G$, $\mathcal O$ open in $\W$ and $\HH$ horizontal. Then $\phi :\mathcal O \subset \W\to \HH$ is intrinsic differentiable in $A_0\in \mathcal O$ if and only if there is a intrinsic linear  $d\phi_{A_0}:\W\to \HH$ such that
\[
\lim_{r\to 0^+}\sup_{0<\|A_0^{-1}B\|<r} \frac{\|     \phi (B) -\phi (A_0)- d\phi_{A_0}( A_0^{-1}B )\|} {\|  \phi (A_0)^{-1} A_0^{-1}B \phi (A_0)\|} =0.
\]
Analogously,  $\phi$ is \emph{uniformly intrinsic differentiable in $A_0\in \mathcal O$}  if  there is a intrinsic linear  $d\phi_{A_0}:\W\to \HH$ such that
\[
\lim_{r \to 0^+}\sup_{\|A_0^{-1}A\|<r}\sup_{0<\|A^{-1}B\|<r}     \frac { \|  \phi ( B) - \phi (A)  - d\phi_{A_0}(A^{-1} B) \|}{\|\phi(A)^{-1}A^{-1}B\phi(A)  \|}  =0
\]
where $r$ is small enough so that $\mcal U(A_0,2r)\subset \mcal O$.

\end{prop}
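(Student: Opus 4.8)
The plan is to derive both equivalences by rewriting Definitions \ref{d3.2.1} and \ref{Ndef1.1} once the shifted function $\phi_{P^{-1}}$ is made explicit, using that $\HH$ horizontal forces $\W$ normal (Remark \ref{rem2.2.1}). First I would fix a base point $A\in\mathcal O$, set $P:=A\cdot\phi(A)$, and for $B'\in\W$ put $B:=A\,\phi(A)\,B'\,\phi(A)^{-1}$, so that $B\mapsto B'=\phi(A)^{-1}A^{-1}B\phi(A)$ is the inverse map; since $\W$ is normal both are homeomorphisms of $\W$ onto itself taking $A\leftrightarrow 0$. Using Proposition \ref{P2.2.18}(2) one computes $PB'=B\,\phi(A)$, hence $\mathbf P_\W(PB')=B$ and $\mathbf P_\HH(PB')=\phi(A)$, so that $\phi_{P^{-1}}(B')=\phi(A)^{-1}\phi(B)$. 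Since first layers add under \eqref{opgr} we have $(B')^1=(A^{-1}B)^1$, whence $d\phi_{A_0}(B')=d\phi_{A_0}(A^{-1}B)$ by \eqref{lineare0}; and since the group operation on the horizontal subgroup $\HH$ is ordinary addition, this gives
\[
\frac{\|\,d\phi_{A_0}(B')^{-1}\,\phi_{P^{-1}}(B')\,\|}{\|B'\|}=\frac{\|\,\phi(B)-\phi(A)-d\phi_{A_0}(A^{-1}B)\,\|}{\|\,\phi(A)^{-1}A^{-1}B\,\phi(A)\,\|},
\]
the argument $B'$ of the shifted function corresponding to the point $B$ of the domain of $\phi$ under the bijection above.

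For the pointwise part I would take $A=A_0$. Through the bijection $B'\leftrightarrow B$, condition \eqref{3.0} then says exactly that $\lim_{r\to0^+}$ of the supremum of the right-hand ratio over $\{B:0<\|\phi(A_0)^{-1}A_0^{-1}B\phi(A_0)\|<r\}$ vanishes, while the asserted condition is the same limit with the supremum taken over $\{B:0<\|A_0^{-1}B\|<r\}$. I would then observe that both are merely reformulations of ``the ratio tends to $0$ as $B\to A_0$'': the families $\{\,\|\phi(A_0)^{-1}A_0^{-1}B\phi(A_0)\|<r\,\}_{r>0}$ and $\{\,\|A_0^{-1}B\|<r\,\}_{r>0}$ are decreasing neighbourhood bases of $A_0$ — the first because $B\mapsto\phi(A_0)^{-1}A_0^{-1}B\phi(A_0)$ is a homeomorphism of $\W$ fixing $A_0\mapsto 0$ and $d$ induces the Euclidean topology — so the $\limsup$ as $B\to A_0$ of the ratio is computed identically along either. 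This yields the ``if and only if''.

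For the uniform part I would run the same computation letting $A$ vary over $\|A_0^{-1}A\|<r$: the identity above turns the inner supremum in \eqref{3.0.1} into a supremum of the right-hand ratio over $\{B:0<\|\phi(A)^{-1}A^{-1}B\phi(A)\|<r\}$, whereas the asserted condition takes it over $\{B:0<\|A^{-1}B\|<r\}$. The main work — and the step I expect to be the real obstacle — is to compare these two regions \emph{uniformly in $A$}, since unlike the one-point case they are not comparable through a single fixed constant. Writing $w=A^{-1}B$ and $h=\phi(A)$ and using the polynomial, layer-by-layer form of the group law \eqref{opgr}--\eqref{precedenti}, I would show that conjugation $v\mapsto hvh^{-1}$ maps the homogeneous ball $\{\|\cdot\|<r\}$ into $\{\|\cdot\|<Cr^{1/\kappa}\}$, with $C$ depending only on an upper bound for $\|\phi(A)^1\|$ for $A$ near $A_0$ — a bound available because intrinsic differentiability of $\phi$ at $A_0$ forces continuity there, hence local boundedness. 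Consequently $\{\|\phi(A)^{-1}A^{-1}B\phi(A)\|<r\}\subseteq\{\|A^{-1}B\|<Cr^{1/\kappa}\}$, and symmetrically the reverse inclusion holds with $r$ replaced by $Cr^{1/\kappa}$; feeding either inclusion into the double supremum and letting $r\to0^+$ converts the asserted condition into \eqref{3.0.1}, and conversely.

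To summarise: the argument is essentially bookkeeping once $\phi_{P^{-1}}$ is written out, and the only genuinely delicate point is quantifying, uniformly for $A$ near $A_0$, how conjugation by $\phi(A)$ distorts homogeneous-norm balls — which is exactly where horizontality of $\HH$ (normality of $\W$ and local boundedness of $\phi$) and the stratification of $\G$ are used.
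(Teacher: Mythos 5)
Your proof follows the paper's own route step for step: you compute $\phi_{P^{-1}}(B')=\phi(A)^{-1}\phi\bigl(A\phi(A)B'\phi(A)^{-1}\bigr)$ using normality of $\W$, invoke horizontality of $\HH$ and Proposition \ref{p2.5} to pass to Euclidean differences and to drop the inner conjugation inside $d\phi_{A_0}$, and arrive at exactly the paper's identity between the two ratios. The one place you go beyond the written proof is the passage from the supremum over $\{0<\|B'\|<r\}$ to the supremum over $\{0<\|A^{-1}B\|<r\}$ uniformly in $A$: the paper states this as an equivalence without comment, whereas you correctly supply the required uniform comparison of conjugated homogeneous balls via the distortion estimate (in effect inequality \eqref{17FS}) together with local boundedness of $\phi$ near $A_0$, thereby making explicit a step the paper treats as routine.
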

\begin{proof}
First notice that $\phi (B) -\phi (A)- d\phi_{A_0} ( A^{-1}B )=d\phi_{A_0} (A^{-1}B)^{-1} \phi (A)^{-1} \phi (B)$ because both $d\phi_{A_0}$ and $\phi$ are valued in the horizontal subgroup $\HH$. 
Because $\W$ is normal in $\G$, then for $P:=A\phi(A)$ and for all $B'\in \mathcal O_{P^{-1}}$ 
$$\phi _{P^{-1}}(B')= \phi (A)^{-1} \phi (A\phi (A) B' \phi (A)^{-1}).$$  
Then $\eqref{3.0}$ yields that  $\phi : \mathcal O \to \HH$ is intrinsic differentiable in $A_0\in \mathcal O$ if there is an intrinsic linear map $d\phi_{A_0} :\W\to \HH$ such that
 \begin{equation*}
\| d\phi_{A_0} (B')^{-1} \phi (A)^{-1} \phi (A\phi (A) B' \phi (A)^{-1}) \| =o(\|B'\|) \quad \mbox{ as } \|B'\|\to 0,
\end{equation*}
that, setting  $B:=A\phi (A) B' \phi (A)^{-1}$ is equivalent to 
 \begin{equation*}
\|d\phi_{A_0}\left(\phi(A)^{-1}A^{-1}B\phi(A)\right)^{-1} \phi (A)^{-1} \phi (B) \| =o(\|\phi(A)^{-1}A^{-1}B\phi(A)\|) \quad \mbox{ as } \|A^{-1}B\|\to 0.
\end{equation*}
Finally, from Proposition \ref{p2.5} we know that $d\phi_{A_0}$ depends only on the variables in the first layer of $\W$. The group operation on the first layer is commutative hence $d\phi_{A_0} \left(\phi(A)^{-1}A^{-1}B\phi(A)\right)= d\phi_{A_0}\left(A^{-1}B\right)$.


\end{proof}

\begin{rem}
If $k<m_1$ is the dimension of $\HH$, and if, without loss of generality, we assume that
\[
\HH=\{P: p_{k+1}=\dots =p_N=0\}\qquad \W=\{P: p_{1}=\dots =p_k=0\}\
\]  then,  from \eqref{DISSUdifferential}, there is a $k\times (m_1-k)$ matrix, here denoted as   $\nabla^\phi\phi(A_0)$, such that 
\begin{equation}\label{comefattoequation}
d\phi_{A_0} (B)= \left(\nabla^\phi\phi(A_0) (b_{k+1},\dots,b_{m_1})^T,0,\dots ,0\right),
\end{equation}
for all $B=(b_1,\dots,b_N)\in \W$. The matrix $\nabla^\phi\phi(A_0)$ is called the \emph{intrinsic horizontal Jacobian} of $\phi$ in $A_0$ or the \emph{intrinsic horizontal gradient} or even the \emph{intrinsic gradient} if $k=1$.
\end{rem}

The point (1) of the following proposition states precisely a natural relation between uniform intrinsic differentiability and intrinsic Lipschitz continuity (see Proposition 3.30  \cite{biblio2} for the Heisenberg groups). The point (2) is a generalization of what was previously known for u.i.d. functions in Heisenberg groups (see  \cite{biblio1}, Theorem 1.3).

\begin{prop}\label{prop3.6cont}
Let $\HH$, $\W$ be complementary subgroups of $\G$ with $\HH$ horizontal. Let  $\mathcal O$ be open in $\W$ and $\phi :\mathcal O\to \HH$ be u.i.d. in $\mathcal O$. Then
\begin{enumerate}
\item $\phi$ is intrinsic Lipschitz continuous in every relatively compact subset of $\mathcal O$. 
\item $\phi \in h^{1/\kappa}_{loc}(\mathcal O)$, that is
$\phi\in \C(\mathcal{O},\R )$ and for all $\mathcal F\Subset \mathcal O$ and $A, B\in \mathcal{F}$
\begin{equation}\label{big3.3.11}
\lim_{r\to 0^+} \sup_{0<\|A^{-1} B\| <r}  \, \frac{\|\phi (B)-\phi (A)\|}{\|A^{ -1}B \|^{1/\kappa}}  =0.
\end{equation}

\item the function $A \mapsto d\phi_{A}$ is continuous in $\mathcal O$.
\end{enumerate}

\end{prop}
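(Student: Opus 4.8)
The three assertions should be handled in the order (1), (3), (2), since the Lipschitz bound in (1) feeds the H\"older estimate in (2), and the continuity of $A\mapsto d\phi_A$ is most naturally extracted from the uniform estimate \eqref{3.0.1} directly. Throughout I would work with the concrete form of uniform intrinsic differentiability given in Proposition \ref{prop1.1.1}: for each $A_0\in\mathcal O$ there is an intrinsic linear $d\phi_{A_0}$ with
\[
\sup_{\|A_0^{-1}A\|<r}\ \sup_{0<\|A^{-1}B\|<r}\ \frac{\|\phi(B)-\phi(A)-d\phi_{A_0}(A^{-1}B)\|}{\|\phi(A)^{-1}A^{-1}B\phi(A)\|}\ \xrightarrow[r\to 0^+]{}\ 0,
\]
and keep in mind from Proposition \ref{p2.5} and \eqref{linea2.0} that $\|d\phi_{A_0}(A^{-1}B)\|\le C_L\|(A^1,\dots)-(B^1,\dots)\|\le C_L\|A^{-1}B\|$ for a constant $C_L$ which, since $A\mapsto d\phi_A$ is a priori only defined pointwise, must first be shown locally bounded.

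\textbf{Step (1): intrinsic Lipschitz continuity.} Fix $\mathcal F\Subset\mathcal O$. I would first show that $\{\,\|d\phi_{A_0}\| : A_0\in\mathcal F\,\}$ is bounded: if not, one extracts a sequence $A_0^{(j)}$ with $\|d\phi_{A_0^{(j)}}\|\to\infty$, and evaluating \eqref{3.0.1} at $A=A_0^{(j)}$ with $B$ ranging over a fixed small sphere forces, via the triangle inequality $\|d\phi_{A_0}(B)\|\le \|\phi(B)-\phi(A_0)\|+o(\|B\|)$ together with local boundedness of $\phi$ (which follows from its continuity, Remark after Definition \ref{d3.2.1}), a contradiction with the homogeneity $\|d\phi_{A_0}(\delta_\lambda B)\|=\lambda\|d\phi_{A_0}(B)\|$. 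Call the bound $M$. Now pick $r_0$ small so that the double supremum in \eqref{3.0.1} is $<1$ for every $A_0$ in a slightly larger relatively compact set and all radii $\le r_0$; then for $A,B\in\mathcal F$ with $\|A^{-1}B\|<r_0$,
\[
\|\phi(A)^{-1}\phi(B)\|\ \le\ \|d\phi_{A}(A^{-1}B)\|+\|\phi(A)^{-1}A^{-1}B\phi(A)\|\ \le\ (M+1)\,\|\phi(A)^{-1}A^{-1}B\phi(A)\|,
\]
using $A_0=A$ in the estimate. This is exactly the intrinsic Lipschitz inequality of Remark \ref{lip0} for pairs that are close; the passage from "close" to "all pairs in $\mathcal F$" is the standard covering/chaining argument for intrinsic Lipschitz graphs (one may quote that $\mathcal F$ is covered by finitely many balls of radius $r_0/C$ and patch, as in \cite{biblio22}).

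\textbf{Step (3): continuity of $A\mapsto d\phi_A$.} Since each $d\phi_A$ is represented by the matrix $\nabla^\phi\phi(A)$ via \eqref{comefattoequation}, continuity of $A\mapsto d\phi_A$ is continuity of $A\mapsto\nabla^\phi\phi(A)$ as a matrix-valued map. I would fix $A_0$ and $A$ near $A_0$, subtract the two defining relations, and get for $0<\|A^{-1}B\|<r$:
\[
\|d\phi_{A_0}(A^{-1}B)-d\phi_A(A^{-1}B)\|\ \le\ \|\phi(B)-\phi(A)-d\phi_{A_0}(A^{-1}B)\|+\|\phi(B)-\phi(A)-d\phi_A(A^{-1}B)\|.
\]
The second term is $o(\|\phi(A)^{-1}A^{-1}B\phi(A)\|)$ by intrinsic differentiability at $A$; the first is bounded by $\varepsilon\,\|\phi(A)^{-1}A^{-1}B\phi(A)\|$ once $\|A_0^{-1}A\|<r$ and $r$ is small, by \eqref{3.0.1}. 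Since $d\phi_{A_0}-d\phi_A$ is (Euclidean) linear on $\W^1$ and $\|\phi(A)^{-1}A^{-1}B\phi(A)\|$ is comparable to $\|A^{-1}B\|$ for the intrinsic Lipschitz graph (Remark \ref{lip0}, using Step (1)), dividing by $\|A^{-1}B\|$ and letting $B$ run over a fixed sphere in $\W^1$ gives $\|d\phi_{A_0}-d\phi_A\|\le C\varepsilon$ whenever $\|A_0^{-1}A\|<r(\varepsilon)$, which is the claim.

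\textbf{Step (2): local $1/\kappa$-little-H\"older continuity.} Continuity of $\phi$ is already recorded. For the H\"older decay, fix $\mathcal F\Subset\mathcal O$ and $\varepsilon>0$. Using Step (1), $\phi$ is intrinsic $C_L$-Lipschitz on a neighbourhood of $\mathcal F$, so by Proposition \ref{lip84}(2) there is $C_2$ with $\|\phi(A)^{-1}\phi(B)\|\le C_2\|A^{-1}B\|^{1/\kappa}$. That alone gives boundedness, not decay; to get decay I combine the differentiability estimate with the $\kappa$-th-root gap between the Euclidean and homogeneous metrics. Write $\phi(B)-\phi(A)=d\phi_A(A^{-1}B)+R$ with $\|R\|\le\varepsilon\|\phi(A)^{-1}A^{-1}B\phi(A)\|\le C\varepsilon\|A^{-1}B\|$ once $\|A^{-1}B\|<r(\varepsilon)$, by \eqref{3.0.1} with $A_0=A$ and the comparability from Remark \ref{lip0}. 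For the main term, $d\phi_A(A^{-1}B)$ depends only on the first-layer component of $A^{-1}B$, i.e. on $\mathbf P_{\W^1}(A^{-1}B)$, and $\|d\phi_A(A^{-1}B)\|\le C_L\,|(A^{-1}B)^1|_{\R^{m_1-k}}$; but for $A,B$ on the intrinsic Lipschitz graph the horizontal increment satisfies $|(A^{-1}B)^1|\le C\|A^{-1}B\|$, and more to the point $|(A^{-1}B)^1|$ is controlled by $d(P,Q)$ and hence, on a bounded set, by $\|A^{-1}B\|$ — so the main term is $O(\|A^{-1}B\|)$. Since $\|A^{-1}B\|=\|A^{-1}B\|^{1-1/\kappa}\cdot\|A^{-1}B\|^{1/\kappa}$ and $\|A^{-1}B\|^{1-1/\kappa}\to 0$, dividing the whole estimate by $\|A^{-1}B\|^{1/\kappa}$ yields $\|\phi(B)-\phi(A)\|/\|A^{-1}B\|^{1/\kappa}\le C\|A^{-1}B\|^{1-1/\kappa}+C\varepsilon\|A^{-1}B\|^{1-1/\kappa}\to 0$, uniformly for $\|A^{-1}B\|<r$ small; letting $r\to 0^+$ gives \eqref{big3.3.11}.

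\textbf{Main obstacle.} The delicate point is Step (2): the naive bound $\|\phi(B)-\phi(A)\|\le C\|A^{-1}B\|^{1/\kappa}$ from Proposition \ref{lip84} is merely \emph{bounded}, not \emph{little}-$o$, so one genuinely needs to exploit that the defining difference quotient in \eqref{3.0.1} has \emph{Euclidean} numerator ($\phi$ valued in the horizontal $\HH$, increments measured in $\R^{m_1-k}$) while the denominator is the homogeneous quantity $\|\phi(A)^{-1}A^{-1}B\phi(A)\|\asymp\|A^{-1}B\|$, and that on a bounded set $\|A^{-1}B\|\ge c\,|(A^{-1}B)^1|$. Making the comparison $|(A^{-1}B)^1|_{\R^{m_1-k}}\lesssim\|A^{-1}B\|$ and $\|\phi(A)^{-1}A^{-1}B\phi(A)\|\asymp\|A^{-1}B\|$ uniform over $\mathcal F$ — the latter requiring the intrinsic Lipschitz bound from Step (1) and the quantitative estimates \eqref{c_0} and the Remark \ref{lip0} chain — is where the real work lies; the rest is bookkeeping with the definitions.
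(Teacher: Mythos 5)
There is a genuine gap in your Step (3), and it contaminates the sketchy uniform-boundedness claim at the start of Step (1) as well. The gap originates in a misreading of Remark \ref{lip0}: you assert that ``$\|\phi(A)^{-1}A^{-1}B\phi(A)\|$ is comparable to $\|A^{-1}B\|$ for the intrinsic Lipschitz graph.'' But the Remark states that $\|\phi(A)^{-1}A^{-1}B\phi(A)\|$ is comparable to $\|P^{-1}Q\|$ with $P=A\phi(A)$, $Q=B\phi(B)$ the \emph{graph} points, and neither quantity is comparable to $\|A^{-1}B\|$ once $\kappa\ge 2$. Indeed, from the estimate \eqref{17FS} (which the paper itself invokes in its proof of (2)) one only gets $\|\phi(A)^{-1}A^{-1}B\phi(A)\|\le C\|A^{-1}B\|^{1/\kappa}$ for bounded $\phi(A)$, and this exponent is sharp: already in $\HH^1$, conjugating a small purely horizontal increment $P$ by a fixed nonzero horizontal $\phi(A)$ produces a vertical component of order $|P^1|$, so $\|\phi(A)^{-1}P\phi(A)\|\sim |P^1|^{1/2}\gg |P^1|$. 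Consequently, after the (correct) subtraction step you reach
\[
\|d\phi_{A_1}(P)-d\phi_{A_0}(P)\|\;\le\; 2\eps\,\|\phi(A_1)^{-1}P\phi(A_1)\|,
\]
and if you now take $P\in\W^1$ on a sphere and divide by $\|P\|=|P^1|$, the right-hand side is of order $\eps\,|P^1|^{1/\kappa-1}$, which diverges as $|P^1|\to 0$ and is at best $\eps\,r_1^{\,1/\kappa-1}$ at the boundary of the ball where the estimate holds --- not a clean $C\eps$. So the argument as written does not deliver the operator-norm bound you want. The paper closes exactly this gap with a device you do not use: since $d\phi_{A_1}-d\phi_{A_0}$ depends only on the first-layer component of its argument (Proposition \ref{p2.5}), one may freely modify the higher layers of $P$. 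The paper sets $\tilde P^1:=P^1$ and then defines $\tilde P^2,\dots,\tilde P^\kappa$ iteratively so as to cancel the conjugation polynomials $\mathcal P^i$, achieving $\phi(A_1)^{-1}\tilde P\phi(A_1)=(\tilde P^1,0,\dots,0)$ exactly, hence $\|\phi(A_1)^{-1}\tilde P\phi(A_1)\|=|P^1|_{\R^{m_1}}$. Only then does dividing give $\|d\phi_{A_1}-d\phi_{A_0}\|\le 2\eps$ on the first layer, and by Proposition \ref{biblio21ddd} on all of $\W$. This cancellation is the essential new idea of the proof of (3), and your plan omits it.

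Two smaller remarks. The same comparability slip appears in your Step (2), but there it is harmless: the correct bound $\|\phi(A)^{-1}A^{-1}B\phi(A)\|\le C\|A^{-1}B\|^{1/\kappa}$ is precisely what the normalizer $\|A^{-1}B\|^{1/\kappa}$ absorbs, and your scheme then reduces to the paper's (difference of the $\rho(r)$ term and the linear term estimated by $C_2 r^{1-1/\kappa}$). Finally, your preliminary uniform-boundedness argument for $\{\|d\phi_{A_0}\|:A_0\in\mathcal F\}$ in Step (1) is unnecessary and, as sketched, runs into the very same divergence $\lambda^{1/\kappa-1}$; the paper sidesteps it entirely by keeping the \emph{same} intrinsic linear map $d\phi_{A_0}$ for all pairs $A,B$ near $A_0$ (so that the local Lipschitz constant is simply $1+C_L(A_0)$) and then invoking a covering argument, rather than comparing $d\phi_A$ across different base points at this stage.
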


\begin{proof} 
(Proof of 1)  
For each $A_0\in \mathcal O$ there is $r=r(A_0)>0$ s.t. for all $A, B\in \mathcal{U}(A_0,r)\cap \mathcal O$ 
\[
\|  \phi ( B) - \phi (A)  - d\phi_{A_0}(A^{-1} B) \|\leq \|\phi(A)^{-1}A^{-1}B\phi(A)  \|.
\]
Moreover from Proposition \ref{p2.5}, because the intrinsic linear function $d\phi_{A_0}$ depends only on the variables on the first layer of $\W$, we have
\[
\Vert{d\phi_{A_0}(A^{-1} B)}\Vert\leq C_L | (A^{-1} B)^1|_{\R^{m_1}} \leq C_L \Vert \phi(A)^{-1}A^{-1}B\phi(A) \Vert,
\]
where $C_L$ is the intrinsic Lipschitz constant of $d\phi_{A_0}$. Finally
\begin{equation*}
\begin{split}
\|  \phi ( B) - \phi (A)\|&\leq \|  \phi ( B) - \phi (A)  - d\phi_{A_0}(A^{-1} B) \|+\Vert{d\phi_{A_0}(A^{-1} B)}\Vert\\
&\leq(1+C_L)  \|\phi(A)^{-1}A^{-1}B\phi(A)  \|.
\end{split}
\end{equation*}
Then (1) follows by a standard covering argument.

\medskip
(Proof of 2)    For $A_0\in \mathcal O$, $A,B \in \mathcal U(A_0,r)\cap \mathcal O$  and $r>0$
let
\[
\rho( r):= \sup_{0<\|A^{-1} B\| <r}  \frac{ \|  \phi (B)- \phi (A)-d \phi _{ A_0}(A^{-1} B)\|}{\|\phi(A)^{-1}A^{-1}B\phi(A)  \| } 
\]
then $\lim_{ r \to 0}\rho( r)=0$ because $\phi $ is u.i.d. at $A_0$. Moreover, by (1) of this Proposition, we know that $\phi $ is intrinsic Lipschitz in $ \mathcal U(A_0,r)\cap \mathcal O$ and by Proposition \ref{lip84} (1) we have that $\| \phi (A)\|<C_1$ for all $A\in \mathcal U(A_0,r)\cap \mathcal O$. 

We recall (see \cite{biblio22}, Lemma 2.2.10) that in any Carnot group $\G$ of step $\kappa$   there is $ C= C(\G)>0$ such that
\begin{equation}\label{17FS}
\| Q^{-1} PQ\| \leq \|P\| +  C\left( \|P\|^{\frac{1}{\kappa}} \|Q\|^{\frac{\kappa -1}{\kappa}}+  \|P\|^{\frac{\kappa -1}{\kappa}} \|Q\|^{\frac{1}{\kappa}} \right) \quad \mbox{for all } P, Q \in \G.
\end{equation}
From \eqref{17FS} with $P=A^{-1}B$ and $Q=\phi (A)$ we deduce the existence of $C=C(C_1)>0$ such that
\[
\|\phi(A)^{-1}A^{-1}B \phi(A)\|  \leq C \|A^{-1}B\|^{1/\kappa } \qquad \mbox{for all }  A, B \in  \mathcal U(A_0,r)\cap \mathcal O.
\]
Therefore using  \eqref{linea2.0}
 \begin{equation*}
\begin{aligned}
& \frac{\| \phi (B)-\phi (A) \|}{ \|A^{-1}B \|^{1/\kappa } }\\
  & \leq  \frac{\|\phi (B)-\phi (A)-d\phi_{A_0} (A^{-1} B) \|}{\|\phi(A)^{-1}A^{-1}B \phi(A)\| } \,   \frac{ \|\phi(A)^{-1}A^{-1}B \phi(A)\|}{ \|A^{-1}B\|^{1/\kappa } }  + \frac{\| d\phi_{A_0} (A^{-1} B)\|}{ \|A^{-1}B\|^{1/\kappa } } \\
            & \leq C \rho(r) + C_2 r^{1-1/\kappa }
\end{aligned}
\end{equation*}
for all $A, B \in  \mathcal U(A_0,r)\cap \mathcal O$ with $A\ne B$. Hence $ \frac{\| \phi (B)-\phi (A) \|}{ \|A^{-1}B \|^{1/\kappa } } \to 0$ for $r\to 0$ and the proof of (2) is complete.
\medskip

(Proof of 3) In order to prove that $A\mapsto d\phi_A$ is continuous in $A_0\in \mathcal O$ we prove that for all $\eps >0$ there is $r=r(\eps, A_0)>0$ such that 
\[
\Vert d\phi_{A_1}(P)- d\phi_{A_0}(P)\Vert <\eps \Vert P\Vert 
\]
for $A_1\in \mathcal U(A_0,r)\cap \W$ and for all $P\in \W$. 

Indeed, because $\phi $ is u.i.d. in $A_0$, for all $\eps>0$ there is $r_0=r_0(\eps, A_0)>0$ such that 
\[
\Vert \phi(B)-\phi(A)-d\phi_{A_0}(A^{-1}B)\Vert <\eps \Vert \phi(A)^{-1} A^{-1}B \phi(A)\Vert \qquad \text{for all $A,B\in \mathcal U(A_0,r_0)\cap \W$}.
\]
Let $A_1\in\mathcal U(A_0,r_0)\cap \W$.  By assumption $\phi$ is u.i.d. also in $A_1$ hence there is $r_1=r_1(\eps, A_1)>0$ such that $\mathcal U(A_1,r_1)\subset \mathcal U(A_0,r_0)$ and
\[
\Vert \phi(B)-\phi(A)-d\phi_{A_1}(A^{-1}B)\Vert <\eps \Vert \phi(A)^{-1} A^{-1}B \phi(A)\Vert \qquad \text{for all $A,B\in \mathcal U(A_1,r_1)\cap \W$}.
\] 
Hence 
\[
\Vert d\phi_{A_1}(A^{-1}B)- d\phi_{A_0}(A^{-1}B)\Vert <2\eps \Vert \phi(A)^{-1} A^{-1}B \phi(A)\Vert \qquad \text{for all $A,B\in \mathcal U(A_1,r_1)\cap \W$}.
\]
Let $\delta\in(0,1)$ be a constant to be chosen at the end.  For all $P\in \mathcal U(0,\delta r_1)\cap \W$ let $B:= A_1P$. Then $B\in  \mathcal U(A_1, r_1)$ and 
\[
\Vert d\phi_{A_1}(P)- d\phi_{A_0}(P)\Vert <2\eps \Vert \phi(A_1)^{-1} P \phi(A_1)\Vert.
\]
The proof is completed if we can put on the right hand side $\Vert  P\Vert$ instead of $\Vert \phi(A_1)^{-1} P \phi(A_1)\Vert$.

Observe that $d\phi_{A_1}$ and $d\phi_{A_0}$ depend only on the components in the first layer of $\W$. Consequently  we can change  the components $P^2, \dots , P^\kappa$ of $P$ without changing $d\phi_{A_1}(P)- d\phi_{A_0}(P)$.  

An explicit computation shows that
\[
\phi(A_1)^{-1} P \phi(A_1)=\left(P^1, P^2+\mathcal P^2(P^1, \phi(A_1)), \dots, P^\kappa+\mathcal P^\kappa(P^1,\dots,P^{\kappa -1}, \phi(A_1))\right)
\]
where $\mathcal P^2, \dots, \mathcal P^{\kappa}$ are polynomials. Moreover each polynomial $\mathcal P^i$ depends only on the variables in the layers from 1 to $i-1$. 

Now we can conclude. Given $P=(P^1,\dots, P^\kappa)\in \mathcal U(0,\delta r_1)\cap \W $, we define $\tilde P= (\tilde P^1,\dots, \tilde P^\kappa)$ putting iteratively
\begin{equation*}
\begin{split}
& \tilde P^1:= P^1\\
& \tilde P^2:= -\mathcal P^2(\tilde P^1, \phi(A_1))\\
&\dots\\
& \tilde P^\kappa:= -\mathcal P^\kappa(\tilde P^1,\dots,\tilde P^{\kappa -1}, \phi(A_1)).
\end{split}
\end{equation*}
Observe that, if $\delta$ is sufficiently small,  $\tilde P \in \mathcal U(0, r_1)$, moreover 
$\phi(A_1)^{-1} \tilde P \phi(A_1)=(\tilde P^1, 0,\dots,0)$   and $\Vert \phi(A_1)^{-1} \tilde P \phi(A_1)\Vert = \vert \tilde P^1 \vert =\vert P^1 \vert $. Finally 
\[
\Vert d\phi_{A_1}(P)- d\phi_{A_0}(P)\Vert=\Vert d\phi_{A_1}(\tilde P)- d\phi_{A_0}(\tilde P)\Vert\leq 2\eps \vert P^1|_{\R^{m_1}}
\]
for all $P\in \mathcal U(0,\delta r_1)\cap \W$. By Proposition \ref{biblio21ddd}, it holds for all $P\in \W$ and the proof is completed.

\end{proof}

\bigskip


\section{$\G$-regular surfaces}
The main result of this section and of the first part of this paper is Theorem \ref{teo4.1}. In it we prove that, if $\HH$ is a horizontal subgroup, the intrinsic graph of $\phi:\mathcal O \subset \W \to \HH$ is a $\G$-regular $k$-codimensional surface  if and only if $\phi$ is uniformly intrinsic differentiable in $\mathcal O$.

\begin{theorem}\label{teo4.1}
Let $\W$ and $\HH$ be complementary subgroups of a Carnot group $\G$ with $\HH$  horizontal and $k$ dimensional. 
Let $\mathcal O$ be open in $\W$, $\phi :\mathcal O \subset \W \to \HH $ and $S:= \graph{\phi}$. Then for every $A_0\in \mathcal O$ the following are equivalent:
\begin{enumerate}
\item there are a neighbourhood $ \mathcal U$ of $A_0\cdot \phi (A_0)$ and $f\in \C_\G^1(  \mathcal U; \R^k)$ such that 
\begin{equation*}
\begin{split}
& S \cap  \mathcal U=\{P\in  \mathcal  U: f(P)=0\}\\
& d_{\bf P}f(Q)_{\vert \HH}:\HH\to \R^k\quad \text{is bijective for all $Q\in \mathcal U$}
\end{split}
\end{equation*}
and $Q\mapsto \left(d_{\bf P}f(Q)_{\vert \HH}\right)^{-1}$ is continuous.
\item $\phi $ is u.i.d. in a neighbourhood $\mathcal O ' \subset \mathcal O$ of $A_0$. 
\end{enumerate}
Moreover, if {\rm(1)} or equivalently {\rm (2)}, hold then, for all $A\in \mathcal O$ the intrinsic differential $d\phi_A$ is
\[
d\phi_A=- \left(d_{\bf P}f(A\phi(A))_{\vert \HH} \right)^{-1}\circ d_{\bf P}f(A\phi(A))_{\vert \W}.
\]
\end{theorem}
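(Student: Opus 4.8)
The plan is to prove the two implications $(2)\Rightarrow(1)$ and $(1)\Rightarrow(2)$ separately, exploiting the Implicit Function Theorem of \cite{biblio7} in one direction and a careful expansion of the defining function along the graph in the other. Throughout I use the normalization $\HH=\{p_{k+1}=\dots=p_N=0\}$, $\W=\{p_1=\dots=p_k=0\}$, so that a point $P\in\G$ splits as $P=(P_\HH,P_\W)$ with $P_\HH=(p_1,\dots,p_k)$ and $P_\W=(p_{k+1},\dots,p_N)$, and $\phi$ is identified (via Remark after Proposition \ref{prop1.1.1}) with a continuous $\R^k$-valued map on $\mathcal O$.

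\medskip
\textbf{Proof of $(1)\Rightarrow(2)$.} Assume $S\cap\mathcal U=\{f=0\}$ with $f\in\C^1_\G(\mathcal U;\R^k)$, $d_{\bf P}f(Q)_{\vert\HH}$ invertible and $Q\mapsto(d_{\bf P}f(Q)_{\vert\HH})^{-1}$ continuous. Fix $A_0\in\mathcal O$ and write $P_0=A_0\phi(A_0)$. For $A,B$ near $A_0$ set $P=A\phi(A)$, $Q=B\phi(B)$; both lie on $S$, so $f(P)=f(Q)=0$. Writing $P^{-1}Q=(C_\HH,C_\W)$ with $C_\W=\mathbf P_\W(P^{-1}Q)=\phi(A)^{-1}A^{-1}B\phi(A)$ and $C_\HH=\mathbf P_\HH(P^{-1}Q)=\phi(A)^{-1}\phi(B)$ (Remark \ref{lip0}), Pansu differentiability of $f$ at $P$ gives
\[
0=f(Q)-f(P)=d_{\bf P}f(P)(P^{-1}Q)+o(\|P^{-1}Q\|)=d_{\bf P}f(P)_{\vert\HH}(C_\HH^1)+d_{\bf P}f(P)_{\vert\W}(C_\W^1)+o(\|P^{-1}Q\|),
\]
using Proposition \ref{propP-differential2} (the differential sees only first-layer components). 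Since $C_\HH\in\HH$ is horizontal, $C_\HH^1=C_\HH=\phi(A)^{-1}\phi(B)=\phi(B)-\phi(A)$; and $d_{\bf P}f(P)_{\vert\W}(C_\W^1)=d_{\bf P}f(P)_{\vert\W}((A^{-1}B)^1)$ because conjugation by $\phi(A)\in\G^1$ leaves the first layer fixed. Solving for $\phi(B)-\phi(A)$ and applying the (uniformly continuous, hence locally bounded) inverse $(d_{\bf P}f(P)_{\vert\HH})^{-1}$,
\[
\phi(B)-\phi(A)=-\big(d_{\bf P}f(P)_{\vert\HH}\big)^{-1}\circ d_{\bf P}f(P)_{\vert\W}\big((A^{-1}B)^1\big)+o(\|P^{-1}Q\|).
\]
By Remark \ref{lip0} (or directly \eqref{c_0}) the error term $o(\|P^{-1}Q\|)$ is $o(\|\phi(A)^{-1}A^{-1}B\phi(A)\|)$, and by continuity of $Q\mapsto(d_{\bf P}f(Q)_{\vert\HH})^{-1}$ and $Q\mapsto d_{\bf P}f(Q)_{\vert\W}$ the linear map $\mathcal L_P:=-(d_{\bf P}f(P)_{\vert\HH})^{-1}\circ d_{\bf P}f(P)_{\vert\W}$ depends continuously on $A$; setting $d\phi_{A_0}(B):=(\mathcal L_{P_0}((A_0^{-1}B)^1),0,\dots,0)$ (intrinsic linear by Proposition \ref{p2.5}), the uniformity of the little-$o$ over $A$ near $A_0$ — which follows since $d_{\bf P}f$ is continuous and the remainder in Pansu differentiability is uniform on compacta for $\C^1_\G$ functions — yields exactly the u.i.d. estimate of Proposition \ref{prop1.1.1}. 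This also establishes the displayed formula for $d\phi_A$.

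\medskip
\textbf{Proof of $(2)\Rightarrow(1)$.} Conversely suppose $\phi$ is u.i.d. in a neighbourhood of $A_0$. By Proposition \ref{prop3.6cont}(1) $\phi$ is intrinsic Lipschitz near $A_0$, so by the Implicit Function Theorem for $\G$-regular surfaces (\cite{biblio7}; see Theorem 1.3 of \cite{biblioMAGNANI}) one might hope to read off a defining $f$ — but that theorem runs the other way, so instead I construct $f$ directly. Define $F:\mathcal U\to\R^k$ near $P_0$ by $F(P):=\phi(\mathbf P_\W(P))\cdot$(first $k$ coordinates of $P$)$^{-1}$, i.e. $F(P)=(p_1,\dots,p_k)-\phi(p_{k+1},\dots,p_N)$ in the chosen coordinates, so that $\{F=0\}=S$ by construction and $F$ is continuous. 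The task is to show $F\in\C^1_\G$, i.e. that the distributional horizontal derivatives $X_iF_j$ exist and are continuous. This is the heart of the matter: $\phi$ itself is only $1/\kappa$-Hölder, so $F$ is certainly not Euclidean $\C^1$, and the horizontal vector fields $X_i$ must be shown to differentiate $F$ in a way controlled by $\nabla^\phi\phi$. The strategy is to verify Pansu differentiability of $F$ at each $Q=B\phi(B)\in S$ by running the computation of the first part in reverse: the u.i.d. estimate of Proposition \ref{prop1.1.1} gives $\phi(B')-\phi(B)=d\phi_B((B^{-1}B')^1)+o(\|\phi(B)^{-1}B^{-1}B'\phi(B)\|)$ for $B'$ near $B$, and since every point $P$ near $Q$ can be written $P=B'\phi(B')\cdot(0,\dots,0,C_\HH)$ with $B'=\mathbf P_\W(P)$ and $C_\HH\in\R^k$ small, a direct substitution shows $F(P)=C_\HH+$ (linear in first-layer coordinates of $Q^{-1}P$) $+o(\|Q^{-1}P\|)$, the linear part being exactly $\big((A^1\text{-part}),\dots\big)\mapsto (C_\HH) - d\phi_B((\cdot)^1)$; off $S$ one extends by the same formula and checks the estimate persists by the intrinsic Lipschitz bound and Proposition \ref{prop3.6cont}(2). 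Thus $F$ is P-differentiable at every point of $\mathcal U$ with $d_{\bf P}F(P)$ given by that affine-linear-in-first-layer expression, and continuity of $P\mapsto d_{\bf P}F(P)$ follows from Proposition \ref{prop3.6cont}(3) (continuity of $A\mapsto d\phi_A$) together with continuity of the projection $\mathbf P_\W$. Hence $f:=F\in\C^1_\G(\mathcal U;\R^k)$; by construction $d_{\bf P}f(Q)_{\vert\HH}$ is the identity-type block, invertible with continuous inverse, and $d\phi_A=-(d_{\bf P}f(A\phi(A))_{\vert\HH})^{-1}\circ d_{\bf P}f(A\phi(A))_{\vert\W}$ by comparing the first-layer blocks of $d_{\bf P}F$.

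\medskip
\textbf{Main obstacle.} The delicate point is the direction $(2)\Rightarrow(1)$: showing that the natural defining function $F$ lies in $\C^1_\G$, which amounts to converting the \emph{intrinsic} (graph-quasi-distance) estimate \eqref{3.0.1} into an \emph{extrinsic} Pansu-differentiability estimate uniformly in a full neighbourhood of $S$, both on and off the surface. One must control the passage from the quasi-distance $\|\phi(A)^{-1}A^{-1}B\phi(A)\|$ to $\|Q^{-1}P\|$ for $P\notin S$ — this is where inequality \eqref{17FS} and the local $1/\kappa$-Hölder bound of Proposition \ref{prop3.6cont}(2), together with \eqref{c_0}, are essential — and one must make the remainder uniform in the base point, which is precisely what u.i.d. (as opposed to mere pointwise intrinsic differentiability) and the continuity statement Proposition \ref{prop3.6cont}(3) are designed to supply.
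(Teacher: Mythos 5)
Your direction $(1)\Rightarrow(2)$ follows essentially the same route as the paper: expand $f$ around a graph point using Pansu differentiability and solve for $\phi(B)-\phi(A)$. The uniformity you appeal to ("the remainder in Pansu differentiability is uniform on compacta for $\C^1_\G$ functions") is exactly the content of the paper's Morrey-type Lemma \ref{lem4.2}, so that is fine as an ingredient. There is, however, one inaccuracy: you claim that $o(\|P^{-1}Q\|)$ is automatically $o(\|\phi(A)^{-1}A^{-1}B\phi(A)\|)$ "by Remark \ref{lip0} (or directly \eqref{c_0})." Inequality \eqref{c_0} alone only gives $\|\phi(A)^{-1}A^{-1}B\phi(A)\|\leq c_0^{-1}\|P^{-1}Q\|$, which is the \emph{wrong} direction for converting the little-$o$. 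What you actually need is the reverse comparison $\|P^{-1}Q\|\leq C\|\phi(A)^{-1}A^{-1}B\phi(A)\|$, i.e.\ the \emph{intrinsic Lipschitz} bound \eqref{4.10} for $\phi$, and Remark \ref{lip0} gives the two-sided comparability only \emph{under the hypothesis} that $\phi$ is intrinsic Lipschitz — something that must be proved, not assumed, here. The paper establishes \eqref{4.10} by a smallness/absorption argument (choose $r$ so that $C\rho(\delta_r)\|d_{\bf P}f(\Phi(A))_{\vert\HH}^{-1}\|\leq 1/2$ and absorb the $\|\phi(A)^{-1}\phi(B)\|$ term), and this bootstrap is a necessary step you skipped.

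The serious problem is in $(2)\Rightarrow(1)$. Your plan is to define $F(P)=P_\HH-\phi(\mathbf P_\W(P))$ on a full neighbourhood of $P_0$ and verify that $F\in\C^1_\G$ by checking Pansu differentiability of $F$ at each point $Q$, including $Q\notin S$. You correctly compute that the Pansu remainder of $F$ at $Q$ applied to $Q^{-1}P$ reduces (up to sign) to the u.i.d.\ remainder $\phi(B')-\phi(B)-d\phi_B((B^{-1}B')^1)$ with $B=\mathbf P_\W(Q)$, $B'=\mathbf P_\W(P)$. But this quantity is $o(\|\phi(B)^{-1}B^{-1}B'\phi(B)\|)$, whereas what you need is $o(\|Q^{-1}P\|)$, and for $Q\notin S$ one has $(Q^{-1}P)_\W=H_Q^{-1}\phi(B)^{-1}B^{-1}B'\phi(B)H_Q$ with $H_Q\neq 0$. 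The conjugation by $H_Q$ does \emph{not} leave the homogeneous norm comparable in the small-increment limit: by \eqref{17FS}, writing $W:=\phi(B)^{-1}B^{-1}B'\phi(B)$ and $w:=\|H_Q^{-1}WH_Q\|$, one gets $\|W\|\leq w+C\big(w^{1/\kappa}\|H_Q\|^{(\kappa-1)/\kappa}+w^{(\kappa-1)/\kappa}\|H_Q\|^{1/\kappa}\big)$, so $\|W\|/w$ blows up like $w^{-(\kappa-1)/\kappa}$ as $w\to 0$ at fixed $H_Q\neq 0$. Thus the u.i.d.\ control at the base point $B$ does \emph{not} translate into a Pansu-remainder bound $o(\|Q^{-1}P\|)$ at $Q\notin S$, and your phrase "checks the estimate persists by the intrinsic Lipschitz bound and Proposition \ref{prop3.6cont}(2)" does not address this mismatch of scales. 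Note also that $F$ is genuinely only $1/\kappa$-Hölder (it inherits the regularity of $\phi$), so one cannot fall back on Euclidean regularity either. This is precisely why the paper does not attempt a direct construction of $F$ but instead invokes Whitney's Extension Theorem (Theorem \ref{white}): the Whitney condition only needs to be verified for pairs of points \emph{both on the graph} (where the quasi-distance is controlled by $\|Q^{-1}Q'\|$ via \eqref{c_0} and the normality of $\W$), and the extension theorem then produces a genuine $\C^1_\G$ function on all of $\G$. After that, one still has to show the zero level set of the Whitney extension agrees with $S$ locally — the paper handles this with the Implicit Function Theorem and uniqueness of the parametrization. Your proposal would sidestep that last issue if it worked, but the main obstruction is the missing Whitney step, and without it the argument does not close.
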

\medskip

 \begin{rem}
If, without loss of generality, we choose a base $X_1,\dots, X_N$ of $\mfrak g$ such that $X_1,\dots, X_k$ are  horizontal vector fields, $\HH=\exp(\text{\rm span} \{X_1,\dots, X_k\})$ and $\W=\exp(\text{\rm span} \{X_{k+1},\dots, X_N\})$ then 
\[
\HH=\{P: p_{k+1}=\dots =p_N=0\}\qquad \W=\{P: p_{1}=\dots =p_k=0\},
\] 
$
\mbox{and, if } f=(f_1,\dots, f_k), \mbox{ then }  \nabla _\G f= \left(
\, \mathcal{M}_1 \, \, | \,\, \mathcal{M}_2 \,
\right)
$
where 
 \begin{equation*}
\mathcal{M}_1 := \begin{pmatrix}
X_1f_1 \dots  X_kf_1 \\
\vdots \qquad \ddots \qquad \vdots \\
X_1f_k \dots  X_kf_k
\end{pmatrix},\qquad\mathcal{M}_2 := \begin{pmatrix}
X_{k+1}f_1\dots  X_{m_1}f_1 \\
\vdots \qquad \ddots \qquad \vdots \\
X_{k+1}f_k \dots  X_{m_1}f_k
\end{pmatrix}.
\end{equation*}
Moreover, for all $Q\in \mathcal U$, for all $A\in \mathcal O$ and for all $P\in \G$ 
\[
\left(d_{\bf P}f(Q)\right)(P)= \left(\nabla _\G f\right(Q))P^1
\]
and the intrinsic differential is 
\begin{equation}\label{teo4.1.1}
\begin{split}
d\phi_A(B)&= \left(\left(\nabla^\phi \phi(A)\right)(b_{k+1},\dots,b_{m_1})^T,0,\dots, 0\right)\\
&=\left( \left( - \mathcal M_1(A\phi(A))^{-1}\mathcal M_2(A\phi(A))\right)(b_{k+1},\dots,b_{m_1})^T ,0,\dots, 0\right),
\end{split}
\end{equation}
for all $B=(b_1,\dots,b_N)\in \W$ $($see \eqref{comefattoequation}$)$.

\end{rem}

 The proof of Theorem \ref{teo4.1} requires both Whitney's Extension Theorem and Implicit Function Theorem in Carnot groups. The proof of Whitney's Extension Theorem   can be found in \cite{biblio8} for Carnot groups of step two only, but it is identical for general Carnot groups (see  \cite{biblioDDD}, Theorem 2.3.8). 
 
 \begin{theorem}[Implicit Function Theorem, see \cite{biblioMAGNANI}, Theorem 1.3] Let $ \mathcal U$ be an open subset of $\G$.  Let $f\in \C^1_\G( \mathcal U, \R^{k})$ and assume that  $\nabla _\G f (Q)$ has rank $k$ for all $Q\in  \mathcal U$. 
We assume that for a fixed $P\in  \mathcal U$ there are complementary subgroups  $\W$ and  $ \HH$ of $\G$ where $\W= \mbox{ker}(\nabla _\G f (P))$. Then there are $\mathcal I\subset \W$ and $\mathcal J \subset \HH$, open and such that  $P_\W\in\mathcal I$ and $P_\HH\in \mathcal J$ and a unique continuous function $\phi :\mathcal I \to \mathcal J$ such that
\[
\left\{Q\in \mathcal I \mathcal J:\; f(Q)=f(P) \right\}= \{ A \phi (A)\, :\, A\in \mathcal I \}
\]
where $\mathcal I\mathcal J= \{A B \, :\, A\in\mathcal  I , B\in \mathcal J\}$.
\end{theorem}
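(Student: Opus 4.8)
The plan is to reduce the statement to the classical Euclidean implicit function theorem, by describing the level set $\{Q:f(Q)=f(P)\}$ in ``exponential coordinates'' adapted to the splitting $\G=\W\cdot\HH$. First I would fix a basis $X_1,\dots,X_N$ of $\mfrak g$ adapted to the datum: since $\nabla_\G f(P)$ involves only the horizontal derivatives, $\W=\ker(\nabla_\G f(P))$ must contain all the higher strata $\G^2\oplus\dots\oplus\G^\kappa$, so by Remark \ref{rem2.2.1} the complementary subgroup $\HH=\HH^1$ is horizontal, hence abelian; choose $X_1,\dots,X_k$ spanning $\HH$, $X_{k+1},\dots,X_{m_1}$ spanning $\W\cap\G^1$, and $X_{m_1+1},\dots,X_N$ spanning $V_2\oplus\dots\oplus V_\kappa\subset\W$. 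With this choice the hypothesis on $\W$ says precisely that the $k\times k$ block $\mathcal M_1(P)=\left[X_jf_l(P)\right]_{l,j=1}^{k}$ of $\nabla_\G f(P)$ is invertible.

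Next I would introduce the coordinate map $E(A,v):=A\cdot\exp(v_1X_1+\dots+v_kX_k)$, defined for $A\in\W$ near $P_\W$ and $v\in\R^k$ near the point $v_P$ with $\exp(\sum_iv_{P,i}X_i)=P_\HH$. Since $(A,B)\mapsto AB$ is a homeomorphism $\W\times\HH\to\G$ with polynomial inverse (the projections $\mathbf P_\W,\mathbf P_\HH$) and $v\mapsto\exp(\sum_iv_iX_i)$ parametrizes $\HH$, the map $E$ restricts to a diffeomorphism of a neighbourhood of $(P_\W,v_P)$ onto a neighbourhood of $P$ inside $\mathcal U$. Put $g:=f\circ E$, so that $g(P_\W,v_P)=f(P)$. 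The only genuinely non-Euclidean point, which I expect to be the main obstacle, is to check that $v\mapsto g(A,v)$ is classically $\C^1$ with $\partial_{v_j}g_l(A,v)=(X_jf_l)(E(A,v))$: because $\HH$ is abelian one may write $A\exp(\sum_iv_iX_i)=\big(A\exp(\sum_{i\neq j}v_iX_i)\big)\cdot\exp(v_jX_j)$, with the first factor independent of $v_j$, so differentiating in $v_j$ amounts to differentiating $f$ along the flow $t\mapsto(\,\cdot\,)\exp(tX_j)$ of the left-invariant field $X_j$, which reproduces the horizontal derivative $X_jf_l$; the latter is continuous since $f\in\C^1_\G$. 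Hence the partial Jacobian $\partial_vg(A,v)=\mathcal M_1(E(A,v))$ is jointly continuous in $(A,v)$ and invertible at $(P_\W,v_P)$, so invertible on a whole neighbourhood.

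Then I would apply the classical implicit function theorem in the variables $v$ with $A$ as a parameter, in the form valid under exactly these hypotheses ($g$ continuous, $\partial_vg$ continuous and invertible at the base point): it produces open sets $\mathcal I'\ni P_\W$ in $\W$ and $\mathcal J'\ni v_P$ in $\R^k$ and a unique continuous $v:\mathcal I'\to\mathcal J'$ with $g(A,v(A))=f(P)$ and $\{(A,v)\in\mathcal I'\times\mathcal J':g(A,v)=f(P)\}=\{(A,v(A)):A\in\mathcal I'\}$. Finally I would transport this back to $\G$: set $\phi(A):=\exp(\sum_iv_i(A)X_i)\in\HH$ and take $\mathcal I:=\mathcal I'$, $\mathcal J:=\{\exp(\sum_iv_iX_i):v\in\mathcal J'\}\subset\HH$; these are open, $P_\W\in\mathcal I$, $P_\HH\in\mathcal J$, and $\mathcal I\mathcal J$ is open in $\G$ (again because $(A,B)\mapsto AB$ is a homeomorphism $\W\times\HH\to\G$). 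Every $Q\in\mathcal I\mathcal J$ is uniquely $Q=AB$ with $A\in\mathcal I$ and $B=\exp(\sum_iv_iX_i)$, $v\in\mathcal J'$, and $f(Q)=g(A,v)$; thus $f(Q)=f(P)$ iff $v=v(A)$ iff $Q=A\phi(A)$, which is precisely the asserted identity $\{Q\in\mathcal I\mathcal J:f(Q)=f(P)\}=\{A\phi(A):A\in\mathcal I\}$. Continuity of $\phi$ follows from that of $v(\cdot)$ and of $\exp$, and uniqueness of $\phi$ is the uniqueness of $v(\cdot)$ (equivalently, the uniqueness of components along $\W$ and $\HH$ already recorded above). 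Apart from the flow computation, all of this is routine bookkeeping with the group splitting.
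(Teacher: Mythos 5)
Your argument is essentially correct, but note that the paper itself does not prove this statement: it is imported verbatim from Magnani (\cite{biblioMAGNANI}, Theorem 1.3), so there is no in-paper proof to match; what you have written is a self-contained alternative. Your route — observe that $\W=\ker(\nabla_\G f(P))$ contains all higher strata, so any complement $\HH$ is horizontal and hence abelian; parametrize a neighbourhood of $P$ by $(A,v)\mapsto A\exp(\sum_i v_iX_i)$ and apply the Euclidean implicit function theorem in $v$ with $A$ as a mere topological parameter — does work, and it is more elementary than the arguments in the literature (Franchi--Serapioni--Serra Cassano prove the hypersurface case by smooth approximation of $f$ plus a monotonicity/degree argument along the flow of $X_1$, and Magnani's proof is set in the greater generality of maps between stratified groups). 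Two points carry the weight and deserve to be made explicit. First, the claim $\partial_{v_j}g_l(A,v)=X_jf_l(E(A,v))$ needs pointwise, not just distributional, differentiability of $f$ along the integral curves of $X_j$; this is exactly P-differentiability of $f$ at every point, which is how $\C^1_\G$ is defined in this paper (together with Proposition \ref{propP-differential2}, giving $d_{\mathbf P}f(Q)(\exp(tX_j))=t\,X_jf(Q)$), so your "flow computation" is legitimate but should quote that definition rather than only the continuity of $X_jf_l$. Second, the version of the implicit function theorem you invoke — $g$ continuous in $(A,v)$, $\partial_vg$ jointly continuous and invertible at the base point, no differentiability in $A$ — is true but is not the textbook statement; it should be justified, e.g.\ by the uniform contraction argument applied to $T_A(v)=v-\mathcal M_1(P)^{-1}\bigl(g(A,v)-f(P)\bigr)$, which also yields the continuity of $A\mapsto v(A)$ and the local uniqueness needed for the set equality. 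With those two justifications spelled out, the remaining bookkeeping (openness of $\mathcal I\mathcal J$, uniqueness of $\phi$ from uniqueness of components along $\W$ and $\HH$) is correct as you state it.
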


\begin{theorem}[Whitney's Extension Theorem]\label{white} 
Let $\mathcal{F} \subset \G$ be a closed set and let $f$ and $g$ be continuous functions where $f:\mathcal{F}\to \R^k$ and   $g:\mathcal{F}\to \mathbf M_{k\times m_1}$ the space of $k\times m_1$ matrices. 
For $\mathcal{K} \subset \mathcal{F}$, $P$ and $Q\in \mathcal K$, $\delta >0$ let
\[
\rho _\mathcal{K}(\delta ):= \sup_{0<\|Q^{-1} P \|<\delta} \frac{|f(P)-f(Q)- g(Q)  (Q^{-1} P)^1|_{\R^k}}{\|Q^{-1} P \|}
\]
where $g(Q)  (Q^{-1} P)^1$ is the usual product between matrix and vector. 
If,  for all compact set $\mathcal{K} \subset \mathcal{F}$,
\[
\lim_{\delta \to 0} \rho _\mathcal{K}(\delta )= 0
\]
then there exists $\hat{f}\in \mathbb{C}^1_\G(\G , \R^k)$ such that
\[
\hat{f}_{|\mathcal{F}} = f, \hspace{0,5 cm   } \nabla _\G \hat{f}_{|\mathcal{F}} = g.
\]
\end{theorem}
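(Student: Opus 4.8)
The plan is to carry out the classical Whitney extension construction inside the metric group $(\G,d)$. We may assume $\emptyset\neq\mathcal F\neq\G$: if $\mathcal F=\emptyset$ take $\hat f\equiv 0$, and if $\mathcal F=\G$ the compatibility hypothesis says precisely that $f$ is uniformly P-differentiable with differential $v\mapsto g(\cdot)v^1$, hence $f\in\C^1_\G(\G,\R^k)$ already. Write $\Omega:=\G\setminus\mathcal F$. Three group-specific facts are used. First, $(\G,d)$ is doubling, so $\Omega$ admits a Whitney-type covering by balls $\mathcal U(x_i,r_i)$ with $\Omega=\bigcup_i\mathcal U(x_i,r_i)$, with enlarged balls $\mathcal U(x_i,2r_i)\subset\Omega$ of uniformly bounded overlap, and with $r_i\approx d(x_i,\mathcal F)\approx d(x,\mathcal F)$ for every $x\in\mathcal U(x_i,2r_i)$. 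Second, the horizontal fields $X_j$ are left invariant and $\delta_\lambda$-homogeneous of degree $1$ and $d(\delta_\lambda P,\delta_\lambda Q)=\lambda d(P,Q)$; so from a fixed smooth bump $\psi$ with $\psi\equiv1$ on $\mathcal U(0,1)$ and $\mathrm{supp}\,\psi\subset\mathcal U(0,2)$, the functions $\psi_i(x):=\psi(\delta_{1/r_i}(x_i^{-1}x))$ satisfy $|X_j\psi_i|\le\|X_j\psi\|_\infty/r_i$, and $\varphi_i:=\psi_i/\sum_l\psi_l$ is a smooth partition of unity on $\Omega$ subordinate to $\{\mathcal U(x_i,2r_i)\}$ with $|X_j\varphi_i|\le C/r_i$. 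Third, on the first layer the group law is addition, so for the function $x\mapsto g(p)(p^{-1}x)^1$ one computes $X_j[(p^{-1}x)^1]=e_j$, i.e. its horizontal gradient is the constant matrix $g(p)$ — this is the $\C^1_\G$-analogue of the first-order Taylor polynomial, and is exactly why $g$ is required to be a $k\times m_1$ (not $k\times N$) matrix.

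Pick for each $i$ a point $p_i\in\mathcal F$ with $d(x_i,p_i)\le 2d(x_i,\mathcal F)$ and define
\[
\hat f(x):=f(x)\quad(x\in\mathcal F),\qquad \hat f(x):=\sum_i\varphi_i(x)\big[f(p_i)+g(p_i)(p_i^{-1}x)^1\big]\quad(x\in\Omega),
\]
which is $C^\infty$ on $\Omega$ with $\nabla_\G\hat f$ there given by a locally finite smooth sum. Since $\C^1_\G(\G,\R^k)$ is, by definition, the set of maps that are P-differentiable at every point with continuous P-differential, it remains to show that $\hat f$ is continuous on $\G$, P-differentiable at each $p\in\mathcal F$ with $d_{\bf P}\hat f(p)(v)=g(p)v^1$, and that $\nabla_\G\hat f(x)\to g(p)$ as $\Omega\ni x\to p\in\mathcal F$.

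These are the two Whitney estimates. Fix $p\in\mathcal F$ and a compact neighbourhood $\mathcal K$ of $p$ in $\mathcal F$. For $x\in\Omega$ near $p$ and $i$ with $\varphi_i(x)\neq0$ we have $r_i\approx d(x_i,\mathcal F)\lesssim d(x,p)$ and hence $d(p_i,p)\lesssim d(x,p)$, so all relevant $p_i$ lie in $\mathcal K$. Using $\sum_i\varphi_i\equiv1$, the identities $(p_i^{-1}x)^1=(p_i^{-1}p)^1+(p^{-1}x)^1$ and $(p_i^{-1}p)^1=-(p^{-1}p_i)^1$, the inequality $|v^1|_{\R^{m_1}}\le\|v\|$, the monotonicity of $\rho_{\mathcal K}$, and the continuity of $g$ we obtain
\[
\big|\hat f(x)-f(p)-g(p)(p^{-1}x)^1\big|\le\sum_i\varphi_i(x)\Big(\rho_{\mathcal K}\!\big(d(p,p_i)\big)d(p,p_i)+\big|g(p_i)-g(p)\big|\big(d(p,p_i)+d(p,x)\big)\Big)=o\big(d(p,x)\big),
\]
which, together with $\big|f(B)-f(p)-g(p)(p^{-1}B)^1\big|\le\rho_{\mathcal K}(d(p,B))d(p,B)$ for $B\in\mathcal F$, yields P-differentiability of $\hat f$ at $p$ with differential $v\mapsto g(p)v^1$. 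For the gradient, on $\Omega$ we have, using $X_j[(p_i^{-1}x)^1]=e_j$,
\[
X_j\hat f(x)=\sum_i(X_j\varphi_i)(x)\big[f(p_i)+g(p_i)(p_i^{-1}x)^1\big]+\sum_i\varphi_i(x)\,g(p_i)e_j,
\]
and since $\sum_iX_j\varphi_i\equiv0$ the first sum equals $\sum_i(X_j\varphi_i)(x)\big[\big(f(p_i)+g(p_i)(p_i^{-1}x)^1\big)-\big(f(p_{i_0})+g(p_{i_0})(p_{i_0}^{-1}x)^1\big)\big]$ for a fixed $i_0$ with $\varphi_{i_0}(x)\neq0$; estimating the bracket exactly as above (now around $p_{i_0}$), using $|X_j\varphi_i|\lesssim 1/r_i\approx 1/d(x,\mathcal F)$, bounded overlap ($O(1)$ terms with comparable $r_i$ and $d(p_i,p_{i_0})\lesssim d(x,\mathcal F)$) and the hypothesis $\rho_{\mathcal K}(\delta)\to0$, the first sum is $o(1)$ as $x\to\mathcal F$; the second sum tends to $g_j(p)$ as $\Omega\ni x\to p$ because the relevant $p_i\to p$, $g$ is continuous and $\sum_i\varphi_i\equiv1$. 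Hence $\nabla_\G\hat f(x)\to g(p)$; with the first estimate this also gives the continuity of $\hat f$ across $\mathcal F$, and combining with P-differentiability on $\Omega$ (smoothness) and on $\mathcal F$ we conclude $\hat f\in\C^1_\G(\G,\R^k)$ with $\hat f_{|\mathcal F}=f$ and $\nabla_\G\hat f_{|\mathcal F}=g$.

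The main obstacle is the control of the partition-of-unity sum $\sum_i(X_j\varphi_i)(x)[\cdots]$ as $x\to\mathcal F$: this is the step that genuinely uses the compatibility hypothesis $\rho_{\mathcal K}(\delta)\to0$, and it forces the two preliminary group-geometric facts — a Whitney covering of $\Omega$ with a subordinate partition of unity satisfying the scale-correct bound $|X_j\varphi_i|\le C/r_i$ (resting on the $1$-homogeneity and left invariance of the $X_j$ and the scaling of $d$), and the observation that only first-layer increments $(q^{-1}p)^1$ enter the ``affine'' building blocks, so that their horizontal gradients are the constant matrices $g(p_i)$. Everything else is the verbatim Euclidean argument, and the passage from ``P-differentiable everywhere with continuous P-differential'' to ``$\C^1_\G$'' is immediate from the definition of $\C^1_\G$.
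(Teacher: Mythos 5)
Your proof is correct and takes the same route as the references the paper cites for this theorem (\cite{biblio8} for step-two groups, \cite{biblioDDD} for arbitrary step); the paper itself does not reproduce the argument and only points to those sources. You have correctly isolated the three group-specific inputs that make the classical Whitney scheme run verbatim: a Whitney covering of $\G\setminus\mathcal F$ coming from the doubling of $(\G,d)$, the scale-correct bound $|X_j\varphi_i|\lesssim 1/r_i$ obtained from left-invariance and degree-$1$ $\delta_\lambda$-homogeneity of the horizontal fields, and the additivity of the first layer, which turns $x\mapsto g(p_i)(p_i^{-1}x)^1$ into a genuine first-order affine block with constant horizontal gradient $g(p_i)$; the two Whitney estimates then reduce, via $(p_i^{-1}x)^1=(p_i^{-1}q)^1+(q^{-1}x)^1$ and $\sum_i X_j\varphi_i\equiv 0$, to the compatibility hypothesis $\rho_{\mathcal K}(\delta)\to 0$ plus uniform continuity of $g$ on compacts, exactly as in the Euclidean argument.
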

A proof is in \cite{biblio8} when $\G$ is a Carnot groups of step two.  The general case is in \cite{biblioDDD}.
\medskip

Before proving Theorem \ref{teo4.1} we state a Morrey type inequality for functions in $\C_\G^1(\G, \R^k)$ (see also Lemma 3.2.2 in \cite{biblio12}).
\begin{lem}\label{lem4.2}
Let $P \in \G$, $r_0>0$  and $f\in \C_\G^1 (\mathcal U(P,r_0),\R^k)$. Then there is $C=C(P,r_0)>0$ such that, for each $\bar Q\in \mathcal U(P,r_0/2)$ and $r\in (0, r_0/4)$, 
\begin{equation*}
\vert f(Q)-f(\bar Q)- \nabla _\G f(\bar Q ) (\bar Q^{-1} Q )^1 \vert_{\R^k} \leq Cr\|\nabla _\G f- \nabla_\G f(\bar Q)\|_{\mathcal{L}^\infty (\mathcal U(\bar Q,2r))}
\end{equation*}
for all  $Q \in \mathcal U(\bar Q,r)$.
\end{lem}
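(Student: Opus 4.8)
The plan is to mimic the classical Euclidean proof of the Morrey inequality: integrate the horizontal gradient along a controlled curve joining $\bar Q$ to $Q$, subtract the constant vector $\nabla_\G f(\bar Q)$, and estimate the resulting integral by the oscillation $\|\nabla_\G f-\nabla_\G f(\bar Q)\|_{\mathcal L^\infty(\mathcal U(\bar Q,2r))}$. First I would reduce, by left-translating and using the left-invariance of $d$ and of the horizontal vector fields, to the case $\bar Q=0$ (one checks that $f\circ\tau_{\bar Q}$ is again $\C^1_\G$ with horizontal gradient $(\nabla_\G f)\circ\tau_{\bar Q}$, and the quantities $(\bar Q^{-1}Q)^1$, $d(\bar Q,Q)$ are unchanged). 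So it suffices to prove
\[
\bigl|f(Q)-f(0)-\nabla_\G f(0)\,Q^1\bigr|_{\R^k}\le C r\,\|\nabla_\G f-\nabla_\G f(0)\|_{\mathcal L^\infty(\mathcal U(0,2r))}
\]
for $Q\in\mathcal U(0,r)$, where the constant $C$ may depend on $P,r_0$ through the local structure of the vector fields.

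Next I would connect $0$ to $Q$ by a horizontal path. The cleanest choice is a \emph{horizontal broken line} or, better, the exponential-coordinate curve: since $d$ is comparable to $d_{cc}$, there is a horizontal curve $\gamma:[0,1]\to\mathcal U(0,2r)$ with $\gamma(0)=0$, $\gamma(1)=Q$, whose control (horizontal) length is $\lesssim d(0,Q)\le r$, and which stays in $\mathcal U(0,2r)$ (here one uses that $d_{cc}$ geodesics from $0$ to a point at distance $\le r$ stay within distance $\le r$, hence well inside $\mathcal U(0,2r)$). Writing $\dot\gamma(t)=\sum_{i=1}^{m_1}h_i(t)X_i(\gamma(t))$ with $\int_0^1|h(t)|\,dt\lesssim r$, the fundamental theorem of calculus along $\gamma$ gives, for each component $f_j$,
\[
f_j(Q)-f_j(0)=\int_0^1\sum_{i=1}^{m_1}h_i(t)\,X_if_j(\gamma(t))\,dt.
\]
Because $X_1,\dots,X_{m_1}$ are the \emph{horizontal} fields, the only components of $Q$ that enter linearly at first order are the first-layer ones, and the term $\nabla_\G f(0)\,Q^1$ is precisely $\int_0^1\sum_i h_i(t)\,(X_if_j)(0)\,dt$ \emph{provided} one parametrizes $\gamma$ so that $\int_0^1 h_i(t)\,dt=q_i^1$ for $i=1,\dots,m_1$; a straight-line-in-exponential-coordinates choice of $\gamma$ achieves exactly this. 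Subtracting,
\[
\bigl|f_j(Q)-f_j(0)-(\nabla_\G f(0)Q^1)_j\bigr|
\le\int_0^1|h(t)|\,\bigl|\nabla_\G f(\gamma(t))-\nabla_\G f(0)\bigr|\,dt
\le\Bigl(\int_0^1|h(t)|\,dt\Bigr)\,\|\nabla_\G f-\nabla_\G f(0)\|_{\mathcal L^\infty(\mathcal U(0,2r))},
\]
and since $\int_0^1|h|\lesssim r$ this is $\le C r\,\|\nabla_\G f-\nabla_\G f(0)\|_{\mathcal L^\infty(\mathcal U(0,2r))}$, as desired. Summing over $j=1,\dots,k$ (or passing to the Euclidean norm on $\R^k$) only changes $C$ by a dimensional factor.

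The step I expect to be the main obstacle is the construction of the connecting curve $\gamma$ with the two simultaneous properties: (a) it stays inside $\mathcal U(\bar Q,2r)$, and (b) its horizontal velocity has $\int_0^1 h_i(t)\,dt$ equal to the first-layer increment $(\bar Q^{-1}Q)^1_i$ while having total variation $\lesssim d(\bar Q,Q)$. One has to reconcile the sub-Riemannian geodesic (which gives the length bound and the containment, via equivalence of $d$ and $d_{cc}$ and the fact that minimizing curves do not leave the ball of radius equal to their length) with the algebraic requirement on the first-layer integral; the latter is automatic if instead one uses the canonical horizontal lift of the straight segment $t\mapsto t\,\mathbf{P}_{\G^1}(\bar Q^{-1}Q)$ in the first layer, and then one must separately check that this explicit curve remains in $\mathcal U(\bar Q,2r)$ — which follows from the estimate $\|\gamma(t)\|\le C\,r$ obtained from the homogeneity of $\mathcal Q$ and the polynomial form \eqref{opgr} of the group law, after possibly shrinking the admissible range of $r$ relative to $r_0$. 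Once the curve is in hand the rest is the routine integration above.
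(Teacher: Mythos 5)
Your plan takes a genuinely different route from the paper. The paper's proof is much shorter: it sets $\hat f(Q):=f(Q)-\nabla_\G f(\bar Q)(\bar Q^{-1}Q)^1$, observes that $\nabla_\G\hat f=\nabla_\G f-\nabla_\G f(\bar Q)$ (since the subtracted term is a Euclidean-linear function of the first-layer variables only), and then applies the Poincar\'e--Morrey inequality of Lu (Theorem 1.1 of \cite{biblio13}) to $\hat f$, which directly yields $\vert\hat f(Q)-\hat f(\bar Q)\vert\le\hat Cr\bigl(\ave_{\mathcal U(\bar Q,2r)}\vert\nabla_\G\hat f\vert^p\bigr)^{1/p}$. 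No connecting curve is needed and the $2r$-ball comes for free from the cited theorem. Your approach is the classical direct integration-along-horizontal-curves argument, which is a reasonable alternative strategy but requires more care in the sub-Riemannian setting.

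Two concrete problems with your execution. First, and most seriously, your option (ii) fails: the ``canonical horizontal lift of the straight segment $t\mapsto t\,\mathbf P_{\G^1}(\bar Q^{-1}Q)$'' is (after translating $\bar Q$ to $0$) the one-parameter subgroup $t\mapsto\exp\bigl(t\sum_{i\le m_1}q^1_iX_i\bigr)$, whose endpoint in exponential coordinates is $(Q^1,0,\dots,0)$, \emph{not} $Q$, whenever $Q$ has nonzero higher-layer components. So the fundamental theorem of calculus along this curve does not produce $f(Q)-f(0)$. Conversely, the dichotomy you set up is spurious: for \emph{any} horizontal curve $\gamma$ from $0$ to $Q$ with $\dot\gamma=\sum_{i\le m_1}h_iX_i\circ\gamma$, the first-layer components satisfy $\dot\gamma^1_i=h_i$ (because $X_i$ for $i\le m_1$ equals $\partial_{p_i}$ plus derivatives in higher layers), so $\int_0^1h_i\,dt=q^1_i$ is automatic; the geodesic already satisfies your algebraic requirement and there is nothing to ``reconcile.'' Second, the containment issue is not removable by ``shrinking the admissible range of $r$'': a $d_{cc}$-geodesic of length $\le C_1d(\bar Q,Q)\le C_1r$ stays only in the $d$-ball $\mathcal U(\bar Q,C_2C_1r)$, and if $C_1C_2>2$ this gives the oscillation over a \emph{larger} ball than the one asserted in the statement. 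Shrinking $r$ keeps the curve inside $\mathcal U(P,r_0)$ but does not shrink it into $\mathcal U(\bar Q,2r)$; you would instead have to enlarge the ball over which the $\mathcal L^\infty$ norm is taken (harmless for the application in Theorem \ref{teo4.1}, but not what the lemma states). To make the curve argument give exactly the stated bound one would need the geodesic to remain within $d$-distance $2r$ of $\bar Q$, which is not guaranteed for an arbitrary equivalent homogeneous norm.
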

\begin{proof}
Let  $\hat{f}: \mathcal U(\bar Q,r) \to \R^k$ be defined as 
\[
\hat{f}(Q):=f(Q)- \nabla _\G f(\bar Q) (\bar Q^{-1} Q )^1  .
\]
Then (see  Theorem 1.1. in \cite{biblio13})  there are $p>1 $ and $\hat C>0$ such that for all $ Q \in \mathcal U(\bar Q,r)$
\[
\vert \hat{f}(Q)-\hat{f}(\bar Q)\vert_{\R^k} \leq \hat{C}r \left( \ave_{ \mathcal U(\bar Q,2r)} |\nabla _\G \hat{f}|^p \, d\mathcal L^N \right)^{1/p} 
\]
where $\ave_{ \mathcal U} \cdot \, d\mathcal L^N:= \frac{1}{\mathcal L^N(\mathcal U)}\int_{\mathcal U}\cdot \, d\mathcal L^N$. 
Then, from $\hat{f}(\bar Q)=f(\bar Q)$ and $\nabla _\G \hat{f} = \nabla _\G f - \nabla _\G f(\bar Q)$, we have
\begin{equation*}
\begin{split}
&\vert f( Q)-f(\bar Q)-  \nabla _\G f( \bar Q) (\bar Q^{-1}  Q )^1 \vert_{\R^k}  ={\vert \hat{f}(Q)-\hat{f}(\bar Q) \vert}_{\R^k} \\
&\qquad\qquad \leq 2\hat{C}\,r\, \left( \ave _{ \mathcal U(\bar Q,2r)} |\nabla _\G \hat{f}|^p \, d\mathcal L^N \right)^{1/p} \\
&\qquad\qquad= 2\hat{C}\,r\, \left( \ave _{ \mathcal U(\bar Q,2r)} | \nabla _\G f - \nabla _\G f(\bar Q) |^p \, d\mathcal L^N \right)^{1/p} \\
&\qquad\qquad\leq Cr\|\nabla _\G f- \nabla_\G f(\bar Q)\|_{\mathcal{L}^\infty ( \mathcal U(\bar Q,2r ))}.
\end{split}
\end{equation*}
\end{proof}

\begin{proof}[Proof of Theorem $\ref{teo4.1}$.]

$(1) \Rightarrow (2)$. 

Let $A_0 \in \mathcal O$,
 $r>0$ such that $\mathcal I(A_0, r):=\mathcal U (A_0,r)\cap \W \subset \mathcal O$ and define $\Phi:\mathcal O\to \G$ as $\Phi(A):=A\phi(A)$ for all $A\in \mathcal O$. 

The function $\phi$ is continuous in $\mathcal O$ as follows from a well known elementary argument in Implicit Function Theorem. Hence there is $\delta_r=\delta(A_0,r)>0$ such that  $\norm {\Phi(A)^{-1}\cdot \Phi(B)} \leq \delta_r$ for all  $A, B\in \mathcal I(A_0, r)$. Then, recalling that $d_{\mathbf P}f=\nabla_{\mathbb G} f$ acting on the first layer,  for all $A, B\in \mathcal I(A_0, r)$
\begin{equation}\label{4.7}
\begin{split}
\vert d_{\mathbf P}f(\Phi(A))&\big(\Phi (A)^{-1} \Phi (B)\big)\vert_{\R^k}\\
&= \vert f(\Phi (B))-f(\Phi (A))- d_{\mathbf P}f(\Phi(A))\big(\Phi (A)^{-1} \Phi (B)\big)  \vert_{\R^k}\\
   & \leq C \rho(\delta _r) \, \|\Phi (A)^{-1} \Phi (B) \| \\
    & \leq C \rho(\delta _r) \, \left(\|\mathbf P_\HH(\Phi (A)^{-1} \Phi (B))\|+  \|\mathbf P_\W(\Phi (A)^{-1} \Phi (B))\|  \right )
\end{split}
\end{equation}
where $C$ is the constant in Lemma $\ref{lem4.2}$, and 
\begin{equation*}
\rho(\delta_r):= \| \nabla_\G f -\nabla _\G f(\Phi(A_0))\|_{\mathcal{L}^\infty ( \mcal U(\Phi(A_0),2\delta_r ))} . 
\end{equation*}
Observe also that  
\[
\lim _{r \to 0} \rho(\delta _r)=0.
\]
The function $\phi$, beyond being simply continuous, is also intrinsic Lipschitz continuous i.e. (see Remark \ref{lip0})  there is a constant $C_L>0$ such that 
\begin{equation}\label{4.10}
\| \phi (A)^{-1}\cdot \phi (B)\| \leq C_L\|\phi(A)^{-1}A^{-1}B\phi(A)  \|.
\end{equation}
Indeed, because $\mathbf P_\HH(\Phi (A)^{-1} \Phi (B))= \phi (A)^{-1}\cdot \phi (B)$ and $\mathbf P_\W(\Phi (A)^{-1} \Phi (B))=\phi(A)^{-1}A^{-1}B\phi(A) $, from $\eqref{4.7}$, 
 we get for all $A,B\in \mathcal I(A_0, \delta_r)$
\begin{equation}\label{4.9}
\begin{split}
&\norm{\left(d_{\mathbf P}f(\Phi(A))_{\vert\HH}^{-1}\circ d_{\mathbf P}f(\Phi(A))_{\vert\W}\right)(A^{-1} B)\cdot \phi(A)^{-1}\cdot \phi(B)}\\
&= \norm{d_{\mathbf P}f(\Phi(A))_{\vert\HH}^{-1}\left( d_{\mathbf P}f(\Phi(A))_{\vert\W}(A^{-1} B)+ d_{\mathbf P}f(\Phi(A))_{\vert\HH}(\phi(A)^{-1}\cdot \phi(B))\right)}\\
&  \mbox{(because $d_{\mathbf P} f=\nabla_{\mathbb G} f$ which acts only on the first components, $d_{\mathbf P} f(A^{-1}B)=d_{\mathbf P}f(\phi(A)^{-1}A^{-1}B\phi(A))$}\\
 & \mbox{ and then we use the fact that $d_{\mathbf P} f$ is an homomorphism}) \\
&= \norm{d_{\mathbf P}f(\Phi(A))_{\vert\HH}^{-1}\left( d_{\mathbf P}f(\Phi(A))(\Phi(A)^{-1}\cdot \Phi(B))\right)}\\
&\leq C \rho(\delta _r) \, \norm{d_{\mathbf P}f(\Phi(A))_{\vert\HH}^{-1}} \left(\|\mathbf P_\HH(\Phi (A)^{-1} \Phi (B))\|+  \|\mathbf P_\W(\Phi (A)^{-1} \Phi (B))\|  \right )\\
&= C \rho(\delta _r) \, \norm{d_{\mathbf P}f(\Phi(A))_{\vert\HH}^{-1}} \left(\|\phi (A)^{-1}\cdot \phi (B)\|+  \|\phi(A)^{-1}A^{-1}B\phi(A))\|  \right ).
\end{split}
\end{equation}
Now we choose $r$ small enough such that for all $A\in \mathcal I(A_0, r)$,
\begin{equation*}
C \rho(\delta _r) \, \norm{d_{\mathbf P}f(\Phi(A))_{\vert\HH}^{-1}}   \leq \frac{1 }{2}.
\end{equation*}
Moreover, because $d_{\mathbf P}f(\Phi(A))_{\vert\W}=d_{\mathbf P}f(\Phi(A))_{\vert\W^1}$ and the homogeneous norm defined in \eqref{epsilkappa} restricted on the horizontal layer is the Euclidean norm, there is $C_2=C_2(A_0,r)>0$ such that for all $A, B\in \mathcal I(A_0, \delta_r)$,
\[
\norm{\left(d_{\mathbf P}f(\Phi(A))_{\vert\HH}^{-1}\circ d_{\mathbf P}f(\Phi(A))_{\vert\W}\right)(A^{-1} B)}\leq C_2\,\vert (A^{-1} B)^1\vert_{\R^{m_1}}.
\]
Putting all this together and by the homogeneous norm defined in \eqref{epsilkappa} is symmetric, 
for all $A,B\in \mathcal I(A_0, \delta_r)$ 
\begin{equation*}
\begin{aligned}
\norm{\phi (A)^{-1}\cdot \phi (B)}& \leq \norm{\left(d_{\mathbf P}f(\Phi(A))_{\vert\HH}^{-1}\circ d_{\mathbf P}f(\Phi(A))_{\vert\W}\right)(A^{-1} B)\cdot \phi(A)^{-1}\cdot \phi(B)}\\
& \qquad + \norm{\left(d_{\mathbf P}f(\Phi(A))_{\vert\HH}^{-1}\circ d_{\mathbf P}f(\Phi(A))_{\vert\W}\right)(A^{-1} B)}\\
     &\leq 1/2 \left(\|\phi (A)^{-1}\cdot \phi (B)\|+  \|\phi(A)^{-1}A^{-1}B\phi(A))\|  \right ) + C_2 |(A^{-1}  B)^1| _{\R^{m_1}}\\
     &\leq 1/2\, \|\phi (A)^{-1}\cdot \phi (B)\|+ (1/2 +C_2) \|\phi(A)^{-1}A^{-1}B\phi(A))\|  .
\end{aligned}
\end{equation*}
This inequality implies that $\eqref{4.10}$ holds. As a consequence from \eqref{4.9} there is $C_3=C_3(A_0,r)>0$ such that for all $A,B\in \mathcal I(A_0,\delta_ r)$
\begin{equation*}
\norm{\left(d_{\mathbf P}f(\Phi(A))_{\vert\HH}^{-1}\circ d_{\mathbf P}f(\Phi(A))_{\vert\W}\right)(A^{-1} B)\cdot \phi(A)^{-1}\cdot \phi(B)}\leq C_3 \rho(\delta_r) \norm{\phi(A)^{-1}A^{-1}B\phi(A)}.
\end{equation*}
Now the proof that $\phi $ is u.i.d. at $A_0$ follows readily. Indeed, for all $A,B\in \mathcal I(A_0, \delta _r)$,
\begin{equation*}
\begin{split}
&\norm{\left(d_{\mathbf P}f(\Phi(A_0))_{\vert\HH}^{-1}\circ d_{\mathbf P}f(\Phi(A_0))_{\vert\W}\right)(A^{-1} B)\cdot \phi(A)^{-1}\cdot \phi(B)}\\
& \mbox{(using again the fact } \|P\| = \|P^{-1}\| )\\
&\leq \norm{\left(d_{\mathbf P}f(\Phi(A))_{\vert\HH}^{-1}\circ d_{\mathbf P}f(\Phi(A))_{\vert\W}\right)(A^{-1} B)\cdot \phi(A)^{-1}\cdot \phi(B)}\\
&\qquad + \norm {\left(d_{\mathbf P}f(\Phi(A_0))_{\vert\HH}^{-1}\circ d_{\mathbf P}f(\Phi(A_0))_{\vert\W}\right)(A^{-1} B)\cdot\left( \left(d_{\mathbf P}f(\Phi(A))_{\vert\HH}^{-1}\circ d_{\mathbf P}f(\Phi(A))_{\vert\W}\right)(A^{-1} B)\right)^{-1}}\\
&\leq \left(C_3 \rho(\delta _r)+o(1)\right){{ \|\phi(A)^{-1}A^{-1}B\phi(A)  \| }}
\end{split}
\end{equation*}
That is $\phi $ is uniformly intrinsic differentiable at $A_0$ and moreover 
\[
d\phi_{A_0}=-d_{\mathbf P}f(\Phi(A_0))_{\vert\HH}^{-1}\circ d_{\mathbf P}f(\Phi(A_0))_{\vert\W}.
\]
This completes the proof of $(1) \Rightarrow (2)$.
\medskip

$(2) \Rightarrow (1)$  The proof of this second part uses Whitney's Extension Theorem to prove the existence of an appropriate function $f\in \mathbb C^1_\G(\G, \R^k).$

%

Let $\mathcal F\subset \mathcal O'$ be a closed set such that $S\subset \mathcal F$. Let $f: \mathcal F \to \R^k$ and $g: \mathcal F \to \mathbf M_{k \times m_1}$  be given by
\begin{equation*}
f(Q):=0, \qquad g(Q):=\left(\,\, \mathcal{I}_k \,\, | \,\,  - \nabla^\phi \phi (\Phi ^{-1}( Q)) \, \right)
\end{equation*}
for all $Q\in \mathcal F$, where $\mathcal{I}_k$ is the $k\times k$ identity matrix and $\nabla^\phi \phi (\Phi ^{-1}( Q))$ is the unique $k\times (m_1-k)$ matrix associated to the intrinsic differential $d\phi _{(\Phi ^{-1}( Q))} $ of $\phi$ at $\Phi ^{-1}( Q)$. 

For  any  $\mathcal{K}$ compact in $\mathcal F$, $Q,Q' \in \mathcal{K}$ and $\delta>0$ let 
\begin{equation*}
\rho _\mathcal{K}(\delta ):= \sup_{0< \|Q^{-1} Q' \| <\delta} \frac{ \left\vert g(Q) (Q^{-1} Q')^1 \right\vert_{\R^k}}{\|Q^{-1} Q' \|}.
\end{equation*}
Whenever $Q=A\cdot \phi(A)$ and $Q'=B\cdot \phi(B)$, because the homogeneous norm on the first layer is exactly the Euclidean norm we have
\[
 \left\vert g(Q) (Q^{-1} Q')^1 \right\vert_{\R^k}=\norm{\phi (B)- \phi (A) -  \nabla^\phi \phi (A) (A^{-1} B)^1}
\]
 and, from \eqref{c_0} and the fact that $\W$ is a normal subgroup,
\[
c_0 \norm{\phi(A)^{-1}A^{-1}B\phi(A) }=c_0 \norm{\mathbf P_\W(Q^{-1}\cdot Q')}\leq \norm{Q^{-1}\cdot Q'}.
\]
Hence for any $A_0\in \Phi ^{-1}(\mathcal{K})$
\begin{equation*}
\begin{aligned}
&\frac{ \left\vert g(Q) (Q^{-1} Q')^1 \right\vert_{\R^k}}{\|Q^{-1} Q' \|}\\
 &  \leq \frac 1{c_0}\, \frac{\|\phi (B)- \phi (A) -  \nabla^\phi \phi (A) (A^{-1} B)^1 \|}{\|\phi(A)^{-1}A^{-1}B\phi(A)  \|  } \\
& \leq \frac 1{c_0}\, \left( \frac{\|\phi (B)- \phi (A) -  \nabla^\phi \phi (A_0) (A^{-1} B)^1 \|}{\|\phi(A)^{-1}A^{-1}B\phi(A)  \|  }  + \frac{ \| (\nabla^\phi \phi (A_0)- \nabla^\phi \phi (A)) (A^{-1} B)^1 \|}{\|\phi(A)^{-1}A^{-1}B\phi(A)  \|  }  \right) \\
& \leq \frac 1{c_0}\,  \left( \frac{\|\phi (B)- \phi (A) -  \nabla^\phi \phi (A_0) (A^{-1} B)^1 \|}{\|\phi(A)^{-1}A^{-1}B\phi(A)  \|  }  +  \| \nabla^\phi \phi (A_0)- \nabla^\phi \phi (A)\|  \right) \\
\end{aligned}
\end{equation*}
By the uniformly intrinsic differentiability of $\phi$ in $A_0 \in \mathcal O'$, for all $A, B\in  \Phi^{-1} (\mathcal{K})$ 
\begin{equation*}
\lim_{\delta \to 0^+} \sup_{\|A_0^{-1}A\|<\delta } \sup _{0<\|A^{-1}B\|<\delta} \frac{\|\phi (B)- \phi (A) -  \nabla^\phi \phi (A_0) (A^{-1} B)^1 \|}{\|\phi(A)^{-1}A^{-1}B\phi(A)  \|  }   = 0
\end{equation*}
and, consequently using also the continuity of the intrinsic differential $d\phi _{(\Phi ^{-1}( Q))} $ (see Proposition \ref{prop3.6cont} (3)) we have
\begin{equation*}
\lim_{ \delta \to 0^+} \rho _\mathcal{K}(\delta ) =0.
\end{equation*}
Hence it is possible to apply Theorem $\ref{white}$ and we obtain the existence of   $\hat f \in \C^1_\G (\G ,\R^k)$ such that, for all $Q \in  \mathcal F$
\[
\hat f(Q)= f(Q)= 0 
\]
\[
\nabla_\G \hat f (Q)=g(Q)= \big(\,\, \mathcal{I}_k \,\, | \,\,  -\nabla^\phi \phi (\Phi ^{-1}( Q)) \, \big)
\]
and, in particular, rank$\nabla_\G \hat f (Q) =k$ for all $Q \in \mathcal F$. 

Let $P=A\phi (A)\in S$ and $S':=\{ Q\in \G : \hat f(Q)=0 $ and rank$( \nabla_\G \hat f(Q))=k  \}$. We  just prove that there is a neighbourhood $\mathcal{U}$ of $P$ such that $\mathcal{U} \cap S \subset S'$ and rank$\nabla_\G \hat f (Q) =k$ for all $Q \in S$. 

 In order to get the implication $(2) \Rightarrow (1)$ we need to find a certain neighbourhood $\mathcal{U}(P,r_0)$ of $P$ such that 
\begin{equation}\label{4.13}
\mathcal{U}(P,r_0) \cap S= \mathcal{U}(P,r_0) \cap S'.
\end{equation}
As $P \in \mathcal{U} \cap S \subset S'$ we get
\begin{equation*}
\hat f(P)=0, \quad \nabla_\G \hat f (P)=  \big(\,\, \mathcal{I}_k \,\, | \,\, -\nabla^\phi \phi (\Phi ^{-1}( P)) \, \big)
\end{equation*}   
and by Implicit Function Theorem there exist an open neighborhood $\mathcal {U}'$ of $P$ and a continuous function $\phi ' :\mathcal I(A,\delta ')\to \HH$ such that 
\begin{equation*}
\Phi ' :\mathcal I (A,\delta ') \to S' \cap \overline{\mathcal {U'}}
\end{equation*}
is an homeomorphism given by
\begin{equation*}
\Phi ' (B)= B \phi ' (B), \quad \mbox{for all } B \in \mathcal I (A,\delta ').
\end{equation*}

Moreover $\Phi ^{'-1} (S' \cap \overline{\mathcal {U'}})$ is an open subset of $\mathcal I (A,\delta ')$, with $A \in \Phi ^{'-1} (S' \cap \overline{\mathcal {U'}})$ because $P \in S'\cap \mathcal {U'}$. So there is $\delta '' \in (0,\delta ')$ such that $\mathcal I(A, \delta '') \subset \Phi ^{'-1} (S' \cap \mathcal {U'})$ and, by the uniqueness of the parametrization, we obtain that $\Phi '  \equiv \Phi $ on $\mathcal I(A, \delta '')$.

Now, set $\mathcal {U''}$ and $\mathcal {U'''}$ be an open neighborhood of $P$ in $\G$ such that 
\begin{equation*}
S \cap \mathcal {U''} = \Phi (\mathcal I(A, \delta ''))= \Phi '(\mathcal I (A, \delta ''))= S' \cap \mathcal {U'''}
\end{equation*}
and set $r_0>0$ be such that $ \mathcal{U}(P,r_0) \subset \mathcal {U''} \cap \mathcal {U'''}$. Therefore, since the last equality holds, we observe that $ \mathcal{U}(P,r_0) \cap S= \mathcal{U}(P,r_0) \cap S' $, i.e. $\eqref{4.13}$ holds.

This completes the proof of the theorem.
\end{proof}

We notice that the proof of $(1) \Rightarrow (2)$ and Implicit Function Theorem give the following general fact:
\begin{theorem}\label{theoremNUOVO1}
Let $S$ be a $k$-codimensional $\G$-regular surface and $\mathcal U$ be a neighbourhood  of $P\in S$. We have, for $f\in \C_\G^1(  \mathcal U; \R^k)$, that
\begin{equation*}
S\cap \mathcal U=\{ Q\in \mathcal U \, : \, f(Q)=0\},
\end{equation*}
and  $\W= ker\left(d_\mathbf Pf(P)\right)$ has a complementary horizontal $k$-dimensional subgroup $\HH$ in $\G$. Then there exist $\mathcal O$ open in $\W$, $\phi :\mathcal O  \to \HH$ u.i.d. and a smaller  neighbourhood  of $P$, $\mathcal U' \subset \mathcal U$,  such that
\begin{equation*}
S\cap \mathcal U' =\graph \phi \cap \mathcal U'.
\end{equation*}
\end{theorem}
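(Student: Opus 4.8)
The plan is to derive this statement from the Implicit Function Theorem together with the implication $(1)\Rightarrow(2)$ already established in the proof of Theorem \ref{teo4.1}; in particular no new analytic input (no Whitney extension) is needed. First I would shrink the neighbourhood of $P$ so that everything becomes non-degenerate there. Since $\W=\ker(d_{\mathbf P}f(P))$ admits the $k$-dimensional horizontal subgroup $\HH$ as complement, $\HH\cap\ker(d_{\mathbf P}f(P))=\{0\}$, so $d_{\mathbf P}f(P)_{\vert\HH}\colon\HH\to\R^k$ is an injective linear map between $k$-dimensional spaces, hence an isomorphism. Writing $d_{\mathbf P}f(P)(Q)=\nabla_\G f(P)Q^1$ as in Proposition \ref{propP-differential2} and splitting $\nabla_\G f=(\mathcal M_1\mid\mathcal M_2)$ as in the remark following Theorem \ref{teo4.1}, this says precisely that the $k\times k$ block $\mathcal M_1(P)$ is invertible. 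Because $Q\mapsto\nabla_\G f(Q)$ is continuous ($f\in\C^1_\G$), there is an open neighbourhood $\mathcal U_0\subset\mathcal U$ of $P$ on which $\mathcal M_1(Q)$ stays invertible; hence on $\mathcal U_0$ the matrix $\nabla_\G f(Q)$ has rank $k$, the map $d_{\mathbf P}f(Q)_{\vert\HH}$ is bijective, and $Q\mapsto(d_{\mathbf P}f(Q)_{\vert\HH})^{-1}$ is continuous.

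Next I would apply the Implicit Function Theorem on $\mathcal U_0$ with the complementary pair $\W,\HH$ --- legitimate since $\W=\ker(\nabla_\G f(P))$ and $\nabla_\G f$ has rank $k$ throughout $\mathcal U_0$ --- at the level $f(P)=0$, which holds because $P\in S$. This yields open sets $\mathcal I\subset\W$ and $\mathcal J\subset\HH$ with $P_\W\in\mathcal I$, $P_\HH\in\mathcal J$, and a unique continuous $\phi\colon\mathcal I\to\mathcal J$; shrinking $\mathcal I,\mathcal J$ if necessary so that $\mathcal I\mathcal J\subset\mathcal U_0$, we get $S\cap(\mathcal I\mathcal J)=\{Q\in\mathcal I\mathcal J:f(Q)=0\}=\graph\phi$. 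I then set $\mathcal O:=\mathcal I$ and $\mathcal U':=\mathcal I\mathcal J$, so that already $S\cap\mathcal U'=\graph\phi=\graph\phi\cap\mathcal U'$.

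It remains to upgrade ``$\phi$ continuous'' to ``$\phi$ u.i.d. on $\mathcal O$''. Fix $A_0\in\mathcal O=\mathcal I$. Then $\mathcal U'$ is an open neighbourhood of $A_0\phi(A_0)$, and on it the function $f$ fulfils exactly condition $(1)$ of Theorem \ref{teo4.1}: $S\cap\mathcal U'=\{f=0\}$, $d_{\mathbf P}f(Q)_{\vert\HH}$ is bijective for all $Q\in\mathcal U'$, and $Q\mapsto(d_{\mathbf P}f(Q)_{\vert\HH})^{-1}$ is continuous, all by the first paragraph. Hence the implication $(1)\Rightarrow(2)$ of Theorem \ref{teo4.1} --- whose proof uses only these facts together with the Morrey inequality of Lemma \ref{lem4.2} --- shows that $\phi$ is u.i.d. in a neighbourhood of $A_0$. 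Since $A_0\in\mathcal O$ was arbitrary, $\phi$ is u.i.d. in all of $\mathcal O$, and the conclusion $S\cap\mathcal U'=\graph\phi\cap\mathcal U'$ has already been recorded.

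I do not anticipate a serious obstacle: the statement is essentially a repackaging of the forward direction of Theorem \ref{teo4.1}, with the Implicit Function Theorem supplying the graph map $\phi$. The only point requiring care is the initial passage to $\mathcal U_0$, where one must invoke continuity of the horizontal gradient to promote the rank-$k$ condition, the bijectivity of $d_{\mathbf P}f(\cdot)_{\vert\HH}$, and the continuity of its inverse --- all assumed a priori only at $P$ --- to a full neighbourhood, so that both the Implicit Function Theorem and hypothesis $(1)$ of Theorem \ref{teo4.1} become applicable at every nearby point.
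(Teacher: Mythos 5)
Your proposal is correct and follows essentially the same route as the paper's proof: use linear algebra plus continuity of $\nabla_\G f$ to upgrade the pointwise bijectivity of $d_\mathbf P f(P)_{\vert\HH}$ to a full neighbourhood, invoke the Implicit Function Theorem to produce $\phi$, and then feed condition $(1)$ of Theorem \ref{teo4.1} into the already-proved implication $(1)\Rightarrow(2)$ to obtain uniform intrinsic differentiability. The paper states this more tersely (it asserts the u.i.d. conclusion before spelling out the verification of the hypotheses), but the underlying argument is identical.
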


\begin{proof}
The existence of $\phi:\mcal O \subset \W\to \V$ follows from Implicit Function Theorem. Moreover $\phi$ is uniformly intrinsic differentiable in $\mcal O$ by Theorem \ref{teo4.1}. By the definition of $\W$ and $\HH$ we have that $d_\mathbf Pf(P) :\HH \to \R^k$ is bijective and by continuity this holds also in some neighbourhood of $P$. The same holds for the map $Q \mapsto (d_\mathbf Pf(Q)_{|\HH})^{-1}$ because the map $Q \mapsto d_\mathbf Pf(Q)_{|\HH}$ is a linear isomorphism between $\HH$ and $\R^k$ which depends continuously on $Q$ in a proper neighbourhood of $P$. Then we can argue as in $(1) \Rightarrow (2)$ of Theorem \ref{teo4.1} and we obtain the thesis.
\end{proof}

As a corollary of Theorem $\ref{teo4.1}$ and Theorem $\ref{biblioKOZHEVNIKOV}$, we get a comparison between the Reifenberg vanishing flat set and the uniformly intrinsic differentiable map:
\begin{coroll}
Let $\W$ and $\HH$ be complementary subgroups of a Carnot group $\G$ with $\HH$  horizontal and $k$ dimensional. 
Let $\mathcal O$ be open in $\W$, $\phi :\mathcal O \subset \W \to \HH $ and $S:= \graph{\phi}$. 
Moreover we assume that there is a family $\{ \W_P\, :\, P\in S\}$ of vertical subgroup complementary to $\HH \, ($hence of codimension $k)$ and for every relatively compact subset $S'\Subset S$ there is an increasing function $\beta :(0,\infty ) \to (0,\infty ), \beta (t)\to 0^+$ when $t\to 0^+,$ such that 
\begin{equation*}
\mbox{dist$_d$} \left( \mcal U(P,r)\cap S, \mcal U(P,r)\cap (P\cdot \W_P) \right) \leq \beta (r)r, \quad r>0
\end{equation*}
for any $P\in S'$. Then,  for every $A_0\in \mathcal O$,
\begin{enumerate}
\item there are a neighbourhood $ \mathcal U$ of $A_0\cdot \phi (A_0)$ and $f\in \C_\G^1(  \mathcal U; \R^k)$ such that 
\begin{equation*}
\begin{split}
& S \cap  \mathcal U=\{P\in  \mathcal  U: f(P)=0\}\\
& d_{\bf P}f(Q)_{\vert \HH}:\HH\to \R^k\quad \text{is bijective for all $Q\in \mathcal U$}
\end{split}
\end{equation*}
and $Q\mapsto \left(d_{\bf P}f(Q)_{\vert \HH}\right)^{-1}$ is continuous. 
\item $\phi $ is u.i.d. in a neighbourhood $\mathcal O ' \subset \mathcal O$ of $A_0$. 
\end{enumerate}

\end{coroll}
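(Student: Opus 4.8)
The plan is to obtain the Corollary by concatenating Theorem~\ref{biblioKOZHEVNIKOV} with the equivalence in Theorem~\ref{teo4.1}. The vanishing flatness hypothesis is exactly condition~(2) of Theorem~\ref{biblioKOZHEVNIKOV}; in fact it is stronger, since here \emph{every} $\W_P$ (not merely one of them) is required to be a vertical subgroup complementary to the horizontal $k$-dimensional subgroup $\HH$, hence of codimension $k$. Applying Theorem~\ref{biblioKOZHEVNIKOV} therefore gives that $S$ is a $k$-codimensional $\G$-regular surface, and that for a defining map $f$ one has $\W_P=\ker\!\big(d_{\mathbf P}f(P)\big)$ for every $P\in S$. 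Since $\G$-regularity and the conclusions (1)--(2) are local, it will suffice to work near a single point.

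Next I would fix $A_0\in\mathcal O$ and set $P_0:=A_0\phi(A_0)\in S$. By the previous step there are a neighbourhood $\mathcal U$ of $P_0$ and $f\in\C^1_\G(\mathcal U,\R^k)$ with $S\cap\mathcal U=\{Q\in\mathcal U:\ f(Q)=0\}$, with $\nabla_\G f$ of rank $k$ throughout $\mathcal U$, and with $\ker\!\big(d_{\mathbf P}f(P_0)\big)=\W_{P_0}$. By hypothesis $\W_{P_0}$ and $\HH$ are complementary, so $\W_{P_0}\cap\HH=\{0\}$; since $d_{\mathbf P}f(P_0)$ depends only on the first layer (Proposition~\ref{propP-differential2}) and $\HH$ lies in the first layer, the restriction $d_{\mathbf P}f(P_0)_{|\HH}:\HH\to\R^k$ is injective, and being an injective linear map between spaces of equal dimension $k$ it is bijective. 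Because $f\in\C^1_\G$, the map $Q\mapsto d_{\mathbf P}f(Q)$ is continuous on $\mathcal U$, the linear isomorphisms $\HH\to\R^k$ form an open subset of the linear maps $\HH\to\R^k$, and inversion is continuous there; hence, after shrinking $\mathcal U$, $d_{\mathbf P}f(Q)_{|\HH}$ is bijective for every $Q\in\mathcal U$ and $Q\mapsto\big(d_{\mathbf P}f(Q)_{|\HH}\big)^{-1}$ is continuous. This is precisely assertion~(1). Assertion~(2), that $\phi$ is u.i.d.\ in a neighbourhood $\mathcal O'\subset\mathcal O$ of $A_0$, then follows at once from the implication $(1)\Rightarrow(2)$ of Theorem~\ref{teo4.1}, which moreover yields $d\phi_{A_0}=-\big(d_{\mathbf P}f(P_0)_{|\HH}\big)^{-1}\circ d_{\mathbf P}f(P_0)_{|\W}$.

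The point that needs care is the invocation of Theorem~\ref{biblioKOZHEVNIKOV}: it is stated for a closed connected set $S$, whereas here $S=\graph{\phi}$ is only a graph over an open set and $\phi$ is not assumed continuous a priori. Since being a $\G$-regular surface is local, it is enough to run the argument on a closed connected portion of $S$ contained in a small closed ball around $P_0$; one should check that the vanishing-flatness estimate — which traps $S$ near $P_0$ between shrinking neighbourhoods of the coset $P_0\cdot\W_{P_0}$ — forces such a portion to be closed and connected, so that Theorem~\ref{biblioKOZHEVNIKOV} really applies. Granting this localization, the rest is the routine continuity/open-mapping computation above, so I expect this reduction to the hypotheses of Theorem~\ref{biblioKOZHEVNIKOV} to be essentially the only obstacle.
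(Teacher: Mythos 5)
Your argument is the same as the paper's: apply Theorem~\ref{biblioKOZHEVNIKOV} to obtain $\G$-regularity and the identity $\W_P=\ker\!\big(d_{\mathbf P}f(P)\big)$, deduce bijectivity of $d_{\mathbf P}f(Q)_{|\HH}$ from the complementarity of $\W_P$ and $\HH$ together with continuity of $Q\mapsto d_{\mathbf P}f(Q)$, and then pass to (2) via Theorem~\ref{teo4.1}. Your remark about the closedness/connectedness hypothesis of Theorem~\ref{biblioKOZHEVNIKOV} is a fair point that the paper's very terse proof leaves implicit; the intended reading is the local one you sketch.
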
 

\begin{proof}
{$\mathbf {(1)}$} Theorem $\ref{biblioKOZHEVNIKOV}$ states there are a neighbourhood $ \mathcal U$ of $A_0\cdot \phi (A_0)$ and $f\in \C_\G^1(  \mathcal U; \R^k)$ such that $S \cap  \mathcal U=\{P\in  \mathcal  U: f(P)=0\}.$ Using again Theorem $\ref{biblioKOZHEVNIKOV}$ we know that $\W_P=ker(d_\mathbf Pf(P))$ and so the thesis follows from the fact that $\W_P$ is complementary subgroup of $\HH$. 

The condition {$\mathbf {(2)}$} is equivalent to the condition (1) thanks to Theorem \ref{teo4.1}.
\end{proof}

\begin{coroll}
Under the same assumptions of Theorem $\ref{teo4.1}$, if $S:= \graph{\phi}$ satisfies the condition $(1)$ of Theorem $\ref{teo4.1}$, then
 \begin{enumerate}
\item the  function $B\mapsto d\phi(B)$ is continuous in $\mathcal O$.
\item  $\displaystyle \phi \in h^{1/\kappa }_{loc}( \mathcal O )$, that is
$\phi\in \C(\mathcal{O},\R )$ and for all $\mathcal F\Subset \mathcal O$ and $A, B\in \mathcal{F}$
\begin{equation*}
\lim_{r\to 0^+} \sup_{0<\|A^{-1} B\| <r}  \, \frac{\|\phi (B)-\phi (A)\|}{\|A^{ -1}B \|^{1/\kappa}}  =0.
\end{equation*}

\end{enumerate}

\end{coroll}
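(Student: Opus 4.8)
The plan is to reduce the corollary entirely to results already available. First I would observe that the hypothesis ``$S=\graph{\phi}$ satisfies condition $(1)$ of Theorem \ref{teo4.1}'' means that, for every $A_0\in\mathcal O$, item $(1)$ in the list of Theorem \ref{teo4.1} holds; by the equivalence $(1)\Leftrightarrow(2)$ established there, this gives that $\phi$ is uniformly intrinsic differentiable in some neighbourhood $\mathcal O'\subset\mathcal O$ of $A_0$. Since, by Definition \ref{Ndef1.1}, being u.i.d.\ in $\mathcal O$ is by definition the same as being u.i.d.\ at every point of $\mathcal O$, I would conclude that $\phi$ is u.i.d.\ on all of $\mathcal O$.

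Once this global u.i.d.\ property is in hand, both conclusions follow at once from Proposition \ref{prop3.6cont}, whose hypotheses are exactly ``$\HH$ horizontal, $\mathcal O$ open in $\W$, $\phi:\mathcal O\to\HH$ u.i.d.\ in $\mathcal O$''. Indeed, part $(3)$ of that proposition states that $A\mapsto d\phi_A$ is continuous in $\mathcal O$, which is conclusion $(1)$; and part $(2)$ states that $\phi\in h^{1/\kappa}_{loc}(\mathcal O)$, i.e.\ $\phi\in\C(\mathcal O,\R)$ together with the vanishing little-H\"older quotient on every $\mathcal F\Subset\mathcal O$, which is conclusion $(2)$.

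I do not expect any real obstacle here: the corollary is a purely formal combination of the equivalence $(1)\Leftrightarrow(2)$ of Theorem \ref{teo4.1} with Proposition \ref{prop3.6cont}. The only point that deserves to be made explicit is the passage from the \emph{pointwise-local} statement supplied by Theorem \ref{teo4.1} (u.i.d.\ in a neighbourhood of each $A_0$) to the \emph{global} hypothesis needed by Proposition \ref{prop3.6cont} (u.i.d.\ on the whole open set $\mathcal O$), and this passage is immediate from the definition of u.i.d.\ on an open set.
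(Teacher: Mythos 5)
Your proof is correct and is exactly the intended one: the paper leaves this corollary without a written proof precisely because it is the formal combination of the equivalence $(1)\Leftrightarrow(2)$ in Theorem \ref{teo4.1} with parts $(2)$ and $(3)$ of Proposition \ref{prop3.6cont}, as you describe. Your remark that local u.i.d.\ at every point gives u.i.d.\ on all of $\mathcal O$ (since u.i.d.\ on an open set is by definition pointwise) is the right small bookkeeping step to make the application of Proposition \ref{prop3.6cont} legitimate.
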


Observe that u.i.d. functions do exist. In particular, when $\HH$ is a horizontal subgroup,  $\HH$ valued Euclidean $\C^1$ functions are u.i.d.

\begin{theorem}
If $\W$ and $\HH$ are complementary subgroups of a Carnot group $\G$ with $\HH$  horizontal and $k$ dimensional. If
 $\mathcal O$ is open in $\W$ and  $\phi :\mathcal O \subset \W \to \HH $ is such that $\phi  \in \C^1( \mathcal O, \HH)$ then $\phi$ is u.i.d. in $ \mathcal O$. 
\end{theorem}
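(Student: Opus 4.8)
The plan is to reduce the statement to Theorem~\ref{teo4.1}: for a fixed $A_0\in\mathcal O$ I will produce an explicit $f\in\C^1_\G$ near $P_0:=A_0\phi(A_0)$ realising condition~(1) of that theorem. Since $A_0$ is arbitrary and uniform intrinsic differentiability is a pointwise (hence local) notion, this yields u.i.d.\ on all of $\mathcal O$.

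I would start by normalising coordinates as in the Remark after Theorem~\ref{teo4.1}, choosing the basis $X_1,\dots,X_N$ of $\mathfrak g$ so that $\HH=\exp(\mathrm{span}\{X_1,\dots,X_k\})$, $\HH=\{p_{k+1}=\dots=p_N=0\}$ and $\W=\{p_1=\dots=p_k=0\}$. Using $\G^1=\HH\oplus\W^1$ together with $\Q_i\equiv0$ for $i\le m_1$, comparing first-layer components in $P=\mathbf P_\W(P)\cdot\mathbf P_\HH(P)$ gives the elementary identity $\mathbf P_\HH(P)=(p_1,\dots,p_k,0,\dots,0)$. Since $\mathbf P_\W$ is continuous and $\mathbf P_\W(P_0)=A_0\in\mathcal O$, the set $\mathcal U:=\mathbf P_\W^{-1}(\mathcal O)$ is an open neighbourhood of $P_0$, and on it I would set
\[
f(P):=(p_1,\dots,p_k)^T-\phi(\mathbf P_\W(P))\in\R^k.
\]
By the identification of $\mathbf P_\HH$ above, $f(P)=0$ is equivalent to $\mathbf P_\HH(P)=\phi(\mathbf P_\W(P))$, i.e.\ to $P=\mathbf P_\W(P)\,\phi(\mathbf P_\W(P))\in\graph{\phi}$, so $S\cap\mathcal U=\{P\in\mathcal U:f(P)=0\}$. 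Also $f$ is Euclidean $\C^1$ on $\mathcal U$, being the difference of the linear map $P\mapsto(p_1,\dots,p_k)$ and the composition of the polynomial projection $\mathbf P_\W$ with the $\C^1$ map $\phi$; hence $f\in\C^1(\mathcal U,\R^k)\subset\C^1_\G(\mathcal U,\R^k)$.

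The main step is then to compute $\nabla_\G f$ and check the bijectivity requirement. In the block notation $\nabla_\G f=(\mathcal M_1\mid\mathcal M_2)$ of the Remark after Theorem~\ref{teo4.1}, the entries of $\mathcal M_1$ are $X_if_j=X_i(p_j)-X_i(\phi_j\circ\mathbf P_\W)$ with $i,j\in\{1,\dots,k\}$. By the standard form of the horizontal vector fields in exponential coordinates, $X_i(p_j)=\delta_{ij}$ for $i,j\le m_1$. For the remaining term I would use that the integral curve of the left-invariant field $X_i$ through any $Q$ is $t\mapsto Q\exp(tX_i)$, which lies in the left coset $Q\HH=\mathbf P_\W(Q)\HH$ because $\exp(tX_i)\in\HH$ for $i\le k$; since $\mathbf P_\W$ is constant on $\mathbf P_\W(Q)\HH$ by uniqueness of the $\W$-$\HH$ components, $X_i(\phi_j\circ\mathbf P_\W)(Q)=0$. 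Therefore $\mathcal M_1(Q)=\mathcal I_k$ for every $Q\in\mathcal U$, so $\nabla_\G f(Q)$ has rank $k$, and through $d_{\mathbf P}f(Q)(V)=\nabla_\G f(Q)V^1$ the map $d_{\mathbf P}f(Q)_{|\HH}:\HH\to\R^k$ is the identity, hence bijective with constant (so continuous) inverse. Thus condition~(1) of Theorem~\ref{teo4.1} holds at $A_0$, and that theorem delivers u.i.d.\ of $\phi$ near $A_0$; letting $A_0$ vary proves u.i.d.\ on $\mathcal O$.

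The one genuinely structural point, and the step I would treat most carefully, is the tangency fact $X_i(\psi\circ\mathbf P_\W)\equiv0$ for $i\le k$ and any $\C^1$ function $\psi$ — equivalently, that the left-invariant fields spanning the horizontal subgroup $\HH$ are tangent to the cosets $A\cdot\HH$, which are exactly the fibres of $\mathbf P_\W$. Everything else (the identification of $\mathbf P_\HH$, the inclusion $\C^1\subset\C^1_\G$, and the invocation of Theorem~\ref{teo4.1}) is routine bookkeeping.
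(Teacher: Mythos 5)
Your proposal is correct and follows essentially the same route as the paper: both construct the explicit Euclidean $\C^1$ (hence $\C^1_\G$) defining function $f(P)=(p_1,\dots,p_k)^T-\phi(\mathbf P_\W(P))$ on $\mathcal O\cdot\HH$, verify that $\mathcal M_1=\mathcal I_k$ by observing that $\mathbf P_\W$ is constant along the integral curves $s\mapsto P\exp(sX_l)$ for $l\le k$, and then invoke Theorem~\ref{teo4.1}. Your phrasing of the tangency argument (left-invariant fields spanning $\HH$ are tangent to the $\HH$-cosets, which are the fibres of $\mathbf P_\W$) is just a slightly more geometric rendering of the paper's direct computation $X_lf_j(P)=\frac{d}{ds}f_j(P\exp(sX_l))\big|_{s=0}=\delta_{jl}$.
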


\begin{proof}
Assume without loss of generality that $X_1,\dots,X_k$ are horizontal vector fields such that $\HH=\exp ({\rm span}\{X_1,\dots,X_k\})$. Then $\phi=\exp \sum_{i=1}^k\phi_i X_i$, where $\phi_i:\mathcal O\to \R$ are $\C^1$ functions.  Let $\mcal U:= \mcal O\cdot \HH$. If $P\in \mcal U$ then $P=P_\W\cdot \exp \sum_{i=1}^k x_i X_i$ with $P_\W\in \mathcal O$ and we define $f:\mathcal U \to \R^k$ as 
\[
f(P)=(x_1-\phi _1(P_\W), \dots, x_k-\phi _k(P_\W)).
\]
With this definition $f\in \C^1(\mathcal U,\R^k)$, hence $f\in \C^1_\G(\mathcal U,\R^k)$. Moreover 
\begin{equation*}
X_l f_j (P) = \frac{d}{ds} f_j (P\exp (sX_l)) _{|s=0}
= \left\{
\begin{array}{l}
1  \quad \mbox{if } j= l \\ 0 \quad \mbox{if } j\ne l
\end{array}
\right.
\end{equation*}
hence ${\rm rank}( \nabla_\G f(P))=k$.

By construction 
$\{Q\in \mathcal U:  f(Q)=0\}= \graph \phi$
then $\graph \phi$ is a non critical level set of $f\in \C^1_\G(\mathcal U,\R^k)$ hence is a $\G$-regular surface. 
From Theorem \ref{teo4.1} it follows that $\phi$ is u.i.d. in $\mathcal O $.
\end{proof}

\section{1-Codimensional Intrinsic graphs in  Carnot groups of step 2}

In this section we characterize uniformly intrinsic differentiable functions $\phi : \mathcal O \subset \W \to \V$, when $\V$ is one dimensional and horizontal, in terms of existence and continuity of suitable  intrinsic derivatives of $\phi$.  Intrinsic derivatives are first order non linear differential operators depending on the structure of the ambient space $\G$ and on the two complementary subgroups $\W$ and $\V$.

In order to do this we have to restrict the ambient space $\G$ under consideration to a subclass of Carnot groups of step two. These groups, denoted here as groups of class $\mathcal B$, are described  in the next subsection where we follow the notations of Chapter 3 of \cite{biblio3}.

\subsection{Carnot groups of class $\mathcal B$} 

\begin{defi}\label{def5.1.1} We say that $\G := (\R^{m+n}, \cdot  , \delta_\lambda )$ is a Carnot group of class $\mathcal B$ if 
there are $n$ linearly independent, skew-symmetric $m\times m$ real matrices $\mathcal{B}^{(1)}, \dots , \mathcal{B}^{(n)}$ such that  
 for all $P=(P^1,P^2)$ and $Q= (Q^1,Q^2)\in \R^{m} \times \R^{n} $ and for all $\lambda >0$
\begin{equation}\label{1}
P\cdot Q = (P^1+Q^1 , P^2+Q^2+ \frac{1}{2}  \langle \mathcal{B}P^1,Q^1 \rangle )
\end{equation}
where $\langle \mathcal{B}P^1,Q^1 \rangle := (\langle \mathcal{B}^{(1)}P^1,Q^1 \rangle, \dots , \langle \mathcal{B}^{(n)}P^1,Q^1 \rangle)$ 
and $\langle \cdot , \cdot \rangle$ is the inner product in $\R^m$ and 
\begin{equation*}
\delta_\lambda P  := (\lambda P^1 , \lambda^2 P^2).
\end{equation*}
Under these assumptions $\G$ is a Carnot group of step 2 with $\R^m$  the  horizontal layer  and $\R^n$  the vertical layer.
\end{defi}
We recall also that, by Proposition 3.4. \cite{biblio2}, for any homogeneous norm in $\G$ there is $c_1>1$ such that for all $P=(P^1,P^2)\in \G$
\begin{equation}\label{deps}
c_1^{-1}\left(\vert P^1\vert_{\R^m}+\vert P^2\vert_{\R^n}^{1/2} \right)\leq\Vert P\Vert\leq c_1\left(\vert P^1\vert_{\R^m}+\vert P^2\vert_{\R^n}^{1/2} \right)
\end{equation}

From now on we will depart slightly from the notations of the previous sections. Precisely, instead of writing $P=(p_1,\dots, p_{m+n})$ we will write 
\[
P=(x_1,\dots,x_m,y_1,\dots, y_n).
\]
With this notation, when $\mathcal{B}^{(s)}:=(b_{ij}^{s})_{i, j=1}^m$,  a basis of  the Lie algebra $\mathfrak g$ of $\G$, is given by the $m+n$ left invariant vector fields
\begin{equation*}
X_j (P) = \partial _{x_j }  +\frac{1}{2 } \sum_ {s=1 }^{n} \sum_ {i=1 }^{m} b_ {ji }^{s} x_i   \partial _{y_s },  \qquad\qquad
Y_s(P)  = \partial _{y_s } ,
\end{equation*}
where $j=1,\dots ,m, $ and $s=1,\dots , n.$


\begin{rem} 
The space of skew-symmetric $m\times m$ matrices has dimension $\frac{m(m-1)}{2}$. Hence in any group $\G$ of class $\mathcal B$ the dimensions $m$ of the horizontal layer and $n$ of the vertical layer are related by the inequality 
\[
n \leq \frac{m(m-1)}{2}.
\]
\end{rem}

\begin{rem}
Heisenberg groups are groups of class $\B$. Indeed $\mathbb H^k= \R^{2k}\times \R$ and 
the group law  is of the form $\eqref{1}$ with  
 \[
\mathcal{B}^{(1)}=  \begin{pmatrix}
0 &  \mathcal{I}_k\\
-\mathcal{I}_k & 0
\end{pmatrix}
\]
where $\mathcal I_k$ is the $k\times k$ identity matrix.

More generally, H-type groups are examples of groups of class $\B$ (see Definition 3.6.1 and Remark 3.6.7 in \cite{biblio3}). In this case  $ \G= \R^m\times\R^{n}$ with 
\begin{equation*}
n<8p+q ,\quad \mbox{with } m = \text{(odd)\,} 2^{4p+q}\, \mbox{ and } \, 0\leq q\leq 3.
\end{equation*}
Observe that if $m$ is odd then $n=0$, hence in the non trivial cases $m$ is even.
The composition law is of the form $\eqref{1}$ where the matrices $\mathcal{B}^{(1)},\dots ,  \mathcal{B}^{(n)}$ have the following additional properties:
\begin{enumerate}
\item $ \mathcal{B}^{(s)}$ is an $m\times m $ orthogonal matrix for all $s=1,\dots n$
\item  $ \mathcal{B}^{(s)}\mathcal{B}^{(l)}=- \mathcal{B}^{(l)} \mathcal{B}^{(s)}$ for every $s,l=1,\dots ,n$ with $s\ne l$.
\end{enumerate}
 Another example of Carnot groups of class $\mathcal B$ is provided by the class $\mathbb F _{m,2}$ of free groups of step-2  (see Section 3.3 in \cite{biblio3}). Here $\mathbb F _{m,2}= \R^{m}\times \R^{\frac{m(m-1)}{2}}$ and  the composition law $\eqref{1}$ is defined by the matrices  $\mathcal B^{(s)}\equiv \mathcal{B}^{(i,j)}$ where $1\leq j< i\leq m$ and $\mathcal{B}^{(i,j)}$ has entries $-1$ in  position $(i,j)$, $1$ in  position $(j,i)$ and $0$ everywhere else. 

Notice that Heisenberg groups are H-type groups while $\mathbb H^1$ is also a free step-$2$ group.
\end{rem}

\subsection{The intrinsic gradient}

 
Let $\G = (\R ^{m+n}, \cdot  , \delta_\lambda )$ be a group of class $\B$ as in Definition \ref{def5.1.1} and $\W$, $\V$ be complementary subgroups in $\G$ with $\V$ horizontal and one dimensional.
 
%

\begin{rem}\label{remIMPORT} To keep notations simpler,
through all this section we assume, without loss of ge\-ne\-ra\-li\-ty, that  the complementary subgroups $\W$, $\V$ are
\begin{equation}\label{5.2.0}
\V:=\{ (x_1,0\dots , 0) \}, \qquad \W:=\{ (0,x_2,\dots , x_{m+n}) \}.
\end{equation}
This amounts simply to a linear change of variables  in the first layer of the algebra $\mathfrak g$.   If we denote $\mathcal{M}$ a non singular $m\times m$ matrix, the linear change of coordinates associated to $\mathcal M$ is 
\begin{equation*}
P=(P^1,P^2)\mapsto (\mathcal M P^1, P^2)
\end{equation*}
The new composition law $\star$ in $\R^{m+n}$,   obtained by writing $\cdot $ in the new coordinates, is
\begin{equation*}
(\mathcal M P^1, P^2) \star (\mathcal M Q^1, Q^2) := (\mathcal M P^1 +\mathcal M Q^1, P^2 + Q^2 + \frac{1}{2} \langle  \mathcal{ \tilde B} \mathcal M P^1 ,\mathcal M Q^1 \rangle),
\end{equation*}
where $\mathcal{\tilde B} := (\mathcal{\tilde B}^{(1)}, \dots , \mathcal{\tilde B}^{(n)})$ 
and 
$ \mathcal{ \tilde B} ^{(s)}= (\mathcal{M}^{-1})^T \mathcal{B}^{(s)}  \mathcal{M}^{-1}$ for $s=1,\dots ,n.$
It is easy to check that the matrices $\mathcal{ \tilde B}^{(1)} ,\dots ,\mathcal{ \tilde B}^{(n)}$ are skew-symmetric and that $(\R^{m+n}, \star, \delta_\lambda)$ is a Carnot group of class $\mathcal B$  isomorphic to $\G = (\R^{m+n}, \cdot, \delta_\lambda)$. 
\end{rem}

When $\V$ and $\W$ are defined as in \eqref{5.2.0} there is a natural inclusion $i: \R^{m+n-1}\to \W$ such that, for all $(x_2,\dots x_m,y_1,\dots,y_n)\in \R^{m+n-1}$,  
\[
i((x_2,\dots x_m,y_1,\dots,y_n)):=(0, x_2,\dots x_m,y_1,\dots,y_n)\in \W.
\]
If $\mathcal O$ and $\phi$ are respectively an open set in $\R^{m+n-1}$ and a function $\phi:\mathcal O\to \R$ 
we denote  $\hat {\mathcal O}:= i(\mathcal O)\subset \W$ and $\hat \phi :\hat{\mathcal O}\to \V$ the function defined as
\begin{equation}\label{phipsi}
\hat\phi (i(A)) :=(\phi(A), 0,\dots ,0) 
\end{equation}
for all $A\in \mathcal O$.
From \eqref{teo4.1.1} in Theorem $\ref{teo4.1}$, if $\hat \phi :\hat{\mathcal O} \subset \W \to \V$ is such that $\graph {\hat \phi} $ is 
locally a non critical level set of  $f\in \C^1_\G (\G, \R)$ with $X_1f\neq 0$, then $\hat\phi$ is u.i.d. in $\hat{\mathcal O}$ and  the following representation of the  intrinsic gradient $\nabla^{\hat \phi} \hat\phi$ holds
\begin{equation}\label{DPHI2}
\nabla^{\hat \phi} \hat\phi (P)=-\left (\frac{X_2f}{X_1f} ,\dots , \frac{X_{m}f}{X_1f}\right)(P\cdot \hat\phi (P))
\end{equation}
for all $P\in \hat{\mathcal O}$.

Moreover, there is an explicit expression of the intrinsic gradient of $\hat \phi $, not involving  $f$, but only derivatives of the real valued function  $\phi$. In Proposition \ref{prop2.22} (see Proposition 4.6 in \cite{biblio1} in Heisenberg groups) we motivate the following definition:

\begin{defi}\label{defintder}
Let $\mcal O $ be  open in $\R^{m+n-1}$,  $\psi :\mcal O \to \R$ be continuous in $\mathcal O$. The \emph{intrinsic derivatives}  $D^\psi _j $, for $j=2,\dots ,m$, are the differential operators with continuous coefficients
\begin{equation*}
\begin{split}
D^\psi _j  &:=\partial _{x_j }+\sum_ {s=1 }^{n} \left(\psi b_ {j 1}^{s} +\frac{1}{2 } \sum_ {l=2 }^{m}  x_l b_ {jl }^{s} \right) \partial _{y_s } \\
&= {X_j}_{\vert \W}+ \psi\,\sum_ {s=1 }^{n}b_ {j 1}^{s} {Y_s}_{\vert \W}
\end{split}
\end{equation*}
where, in the second line with abuse of notation, we denote with the same symbols $X_j$ and $Y_s$ the vector fields acting on functions defined in $\mathcal O$.

If  $\hat\psi:=(\psi, 0,\dots, 0):\hat{\mathcal O}\to \V$, we denote
\emph{intrinsic horizontal gradient} $\nabla^{\hat\psi}$ the differential operator 
\[
\nabla^{\hat\psi} :=(D^\psi _2,\dots, D^\psi _m).
\]
\end{defi}


\begin{prop}\label{prop2.22}
Let $\G := (\R^{m+n}, \cdot  , \delta_\lambda )$ be a Carnot group of class $\B$ and $\V$, $\W$ the complementary subgroups defined in \eqref{5.2.0}. Let $\mcal U$ be  open in $\G$,  $f\in \mathbb{C}^1_\G(\mcal U, \R )$ with $X_1f>0$ and assume that $S:=\{ P\in \mcal U : f(P)=0\}$ is non empty. Then
\begin{enumerate}
\item [(i)] for every $P\in \mcal U$ there exist $\mcal U'$ open neighbourhood of $P$ in $\G, \hat {\mcal O}$ open in $\W$ and $\hat \psi : \hat {\mcal O} \to \V$ such that $S\cap \mcal U' =\graph {\hat \psi} \cap \mcal U'$.  Moreover $\hat \psi$ is u.i.d. in $\hat{\mcal O}$ and the distributional intrinsic gradient $\left(D_2^{\psi} {\psi},\dots, D_m^{\psi} {\psi} \right)$ of the associated function $\psi : \mcal O \to \R$ is defined as
\begin{equation}\label{pde5}
D_j^{\psi} {\psi} = X_j \psi+\psi  \sum_ {s=1 }^{n} b_ {j1 }^{s} Y_s\psi , \quad \mbox{for }\,\, j=2,\dots , m.
\end{equation}
Moreover the distributional intrinsic gradient of $\psi$ has a continuous representative, which is $\nabla^{\hat \psi} \hat\psi \, ($continuous because of \eqref{DPHI2}$)$, i.e. 
\[
\nabla^{\hat \psi} \hat\psi (i(\cdot))=\left(D_2^{\psi} {\psi}(\cdot ),\dots, D_m^{\psi} {\psi}(\cdot) \right)\]
holds in the sense of distribution.
\item [(ii)]
The subgraph $\mcal E:=\{ P\in \mcal U : f(P)<0\}$ has locally finite $\G$-perimeter in $\mcal U$ and its $\G$-perimeter measure $|\partial \mcal E|_\G$ has the integral representation
\[
|\partial \mcal E|_\G (\mathcal{F}) =\int _{\Phi ^{-1}(\mathcal{F})} \sqrt{1+  |\nabla^{\hat \psi} \hat\psi|_{\R^{m-1}}^2} \, \, d\mathcal{L}^{m+n-1}
\]
for every Borel set $\mathcal{F} \subset \mathcal U $ where $\Phi:\mathcal O\to \G$ is defined as $\Phi(A):=i(A)\cdot\hat \psi(i(A))$ for all $A\in\mathcal O$. 
\end{enumerate}
\end{prop}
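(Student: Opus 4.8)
The plan is to prove the three assertions in turn, relying on Theorem \ref{teo4.1} to pass from the level set to a uniformly intrinsic differentiable graph and on a mollification of $f$ for the explicit PDE. For the graph: since, as recalled after Definition \ref{def5.1.1}, $\exp(tX_1)=(t,0,\dots,0)\in\V$, the integral curve of the left invariant field $X_1$ through $A\in\W$ is $t\mapsto A\cdot(t,0,\dots,0)$, which sweeps out the coset $A\cdot\V$; along it $\tfrac{d}{dt}f=X_1f>0$, so $f$ is strictly increasing there and $S$ meets $A\cdot\V$ at most once, and exactly once near the crossing point by continuity. The standard argument from the Implicit Function Theorem then yields an open neighbourhood $\mcal U'$ of the given $P$, an open $\hat{\mcal O}\subset\W$ and a continuous $\hat\psi:\hat{\mcal O}\to\V$ with $S\cap\mcal U'=\graph{\hat\psi}\cap\mcal U'$. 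Moreover $X_1f>0$ forces $\mrm{rank}\,\nabla_\G f=1$, so $S$ is a $\G$-regular surface; $d_{\mbf P}f(Q)_{\vert\V}:\V\to\R$ is multiplication by $X_1f(Q)\ne 0$, hence bijective with inverse continuous in $Q$. Thus condition (1) of Theorem \ref{teo4.1} holds near $P$, and the implication $(1)\Rightarrow(2)$ there gives that $\hat\psi$ is u.i.d.\ in $\hat{\mcal O}$ with $\nabla^{\hat\psi}\hat\psi$ given by \eqref{teo4.1.1}, i.e.\ by \eqref{DPHI2}.

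For the PDE \eqref{pde5}, to be read in $\D'(\mcal O)$ with the nonlinear term understood through $\psi\,Y_s\psi=\tfrac12 Y_s(\psi^2)$, I would mollify: set $f_\eps:=f*\rho_\eps\in\C^\infty$; a standard estimate on the commutator of convolution with the polynomial-coefficient fields $X_i$ gives $X_if_\eps\to X_if$ locally uniformly, so $X_1f_\eps\ge c>0$ on a fixed compact around $P$ for small $\eps$. The classical implicit function theorem applied to $(a,t)\mapsto f_\eps(i(a)\cdot(t,0,\dots,0))$, whose $t$-derivative is $X_1f_\eps\ge c$, produces smooth $\psi_\eps$ with $f_\eps(\Phi_\eps(a))=0$, where $\Phi_\eps(a):=i(a)\cdot(\psi_\eps(a),0,\dots,0)$, and $\psi_\eps\to\psi$, $\Phi_\eps\to\Phi$ locally uniformly. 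Differentiating $f_\eps\circ\Phi_\eps\equiv 0$ in the $x_j$ and the $y_s$ variables, eliminating the Euclidean partials $\partial_kf_\eps$ in favour of the $X_kf_\eps$, and using the skew-symmetry $b_{1j}^{s}=-b_{j1}^{s}$ of $\mathcal{B}^{(s)}$, a routine rearrangement gives the pointwise identity $D^{\psi_\eps}_j\psi_\eps=-\tfrac{X_jf_\eps}{X_1f_\eps}(\Phi_\eps)$ for $j=2,\dots,m$. Letting $\eps\to 0$: $\psi_\eps\to\psi$ and $\psi_\eps^2\to\psi^2$ locally uniformly give $X_j\psi_\eps\to X_j\psi$ and $\tfrac12\sum_s b_{j1}^{s}Y_s(\psi_\eps^2)\to\tfrac12\sum_s b_{j1}^{s}Y_s(\psi^2)$ in $\D'(\mcal O)$, hence $D^{\psi_\eps}_j\psi_\eps\to D^{\psi}_j\psi$ in $\D'(\mcal O)$; on the other hand the right-hand side converges locally uniformly to $-\tfrac{X_jf}{X_1f}(\Phi)$ because $X_jf_\eps\to X_jf$, $X_1f_\eps\to X_1f$ locally uniformly, $X_1f$ is bounded away from $0$, and $\Phi_\eps\to\Phi$. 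This identifies $D^{\psi}_j\psi$ with the continuous function \eqref{DPHI2}, which by the first paragraph equals $\nabla^{\hat\psi}\hat\psi(i(\cdot))$.

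For (ii): since $X_1f>0$, near $P$ the set $\mcal E$ coincides with the intrinsic subgraph $\{\,i(a)\cdot(t,0,\dots,0):a\in\mcal O,\ t<\psi(a)\,\}$ of $\hat\psi$, and $\hat\psi$ being u.i.d.\ is intrinsic Lipschitz by Proposition \ref{prop3.6cont}(1); hence $\chi_{\mcal E}\in BV_{\G,loc}$ and $\mcal E$ has locally finite $\G$-perimeter (cf.\ \cite{biblio24}). For the integral representation I would argue as in \cite{biblio1}, Proposition 4.6: for $\vp\in\C^1_c(\mcal U,H\G)$ with $|\vp|\le 1$, the change of variables $\Phi$ and an integration by parts using \eqref{pde5} rewrite $\int_{\mcal E}\mrm{div}_\G\vp\,d\mathcal{L}^N$ as $-\int_{\Phi^{-1}(\mrm{supp}\,\vp)}\langle(\vp_1,\dots,\vp_m)(\Phi(a)),\,(1,-\nabla^{\hat\psi}\hat\psi(a))\rangle\,d\mathcal{L}^{m+n-1}$; taking the supremum over such $\vp$ yields $|\partial\mcal E|_\G(\mcal F)=\int_{\Phi^{-1}(\mcal F)}\sqrt{1+|\nabla^{\hat\psi}\hat\psi|_{\R^{m-1}}^{2}}\,d\mathcal{L}^{m+n-1}$, the density being exactly $|\nabla_\G f|_{\R^m}/X_1f$ along $\Phi$ by \eqref{DPHI2}.

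The delicate part is the PDE in (i): the mollification must be handled with care — in particular the locally uniform convergence $X_if_\eps\to X_if$, which does \emph{not} reduce to $X_i(f*\rho_\eps)=(X_if)*\rho_\eps$ (false on a non-commutative $\G$) but requires an explicit commutator bound, and the distributional passage to the limit in the nonlinear term, which is precisely what dictates the reading $\psi\,Y_s\psi=\tfrac12 Y_s(\psi^2)$ of \eqref{pde5}. In (ii) the only substantive point is the Gauss--Green / change-of-variables identity, which I expect to transfer essentially verbatim from the Heisenberg proof.
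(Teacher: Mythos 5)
Your proposal is correct and follows essentially the same route as the paper: Implicit Function Theorem plus Theorem \ref{teo4.1} for existence of $\hat\psi$ and its uniform intrinsic differentiability, mollification of $f$ to get $\C^1$ approximations $f_\eps$ and $\psi_\eps$, pointwise computation of $D^{\psi_\eps}_j\psi_\eps=-\tfrac{X_jf_\eps}{X_1f_\eps}\circ\Phi_\eps$ via the skew-symmetry of $\mathcal B^{(s)}$, and a distributional passage to the limit, followed by the Gauss--Green / area-formula argument for the perimeter density. The only cosmetic differences are that you build the approximations by Euclidean convolution with an explicit commutator estimate (correctly flagging that $X_i(f*\rho_\eps)\neq(X_if)*\rho_\eps$), whereas the paper cites the approximation scheme of Step~1 and Step~3 of the Implicit Function Theorem in \cite{biblio7}; and for (ii) you invoke the Heisenberg computation in \cite{biblio1} while the paper again cites \cite{biblio7} — both amount to the same change-of-variables identity. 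Your explicit remark that the nonlinear term in \eqref{pde5} must be read as $\psi\,Y_s\psi=\tfrac12 Y_s(\psi^2)$ for the distributional limit is a useful clarification that the paper leaves implicit.
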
 
\begin{proof} 
The first part of statement (i) follows from Implicit Function Theorem and Theorem \ref{theoremNUOVO1}. 
 Moreover $\hat \psi$ is uniformly intrinsic differentiable in $\hat{\mcal O}$ by Theorem \ref{teo4.1}. Let $\psi :\mathcal O\subset \R^{m+n-1} \to \R$ be the real valued continuous function associated to $\hat \psi$ as in $\eqref{phipsi}$.

Fix $A\in \mcal O $. Let $P:=\Phi (A)$ where $\Phi $ is the  graph map of $\hat \psi$ defined as $\Phi (A):=i(A)\cdot \hat \psi(i(A))$. Because $S:=\graph {\hat \psi} $ is a $\G$-regular hypersurface we know that  there are $r>0$, $\delta>0$ and $f\in \C^1_\G ( \mcal U(P,r) , \R )$ such that $f\circ \Phi =0$ in $\mathcal I(A,\delta)$.

Now we use some results proved in Implicit Function Theorem in  \cite{biblio7} (see Theorem 2.1 in \cite{biblio7}). Arguing as in Step 1 of this theorem we can prove the existence of  $0<r'<r$ and of  a family $(f_\epsilon )_{\epsilon >0} \subset \C^1({ \mcal U(P,r')}, \R)$ such that
\begin{equation}\label{alDpsicDDD1}
\begin{aligned}
f_\epsilon \to f \, & \mbox{  and  } \, \nabla _\G f_\epsilon \to \nabla _\G f \qquad \mbox{  uniformly on } { \mcal U(P,r')} , \, \, \mbox{ as } \, \epsilon \to 0.\\
\end{aligned}
\end{equation}
Moreover as in Step 3 of Implicit Function Theorem in \cite{biblio7} there is $(\psi _\epsilon )_{\epsilon >0} \subset \C^1 (\mathcal I(A,\delta) , \R )$ satisfying
\begin{equation}\label{alDpsicDDD2}
\begin{aligned}
\psi _\epsilon \to \psi \,&  \mbox{  and  } \, - \frac{\widehat{\nabla _\G }f   _\epsilon  }{  X_1 f_\epsilon  } \circ \Phi _\epsilon \to - \frac{ \widehat{\nabla _\G }f  }{  X_1 f  } \circ \Phi 
\end{aligned}
\end{equation}
uniformly on  $\mathcal I(A,\delta)$ as $\epsilon \to 0$, 
where $ \widehat{\nabla _\G }f :=(X_2f,\dots ,X_{m}f )$ and the graph maps $\Phi _\epsilon $ of $\hat \psi _\epsilon =(\psi _\epsilon ,0,\dots ,0)$  are such that $f_\epsilon \circ \Phi _\epsilon  \equiv 0$.\\ 
Moreover, the set $S_\epsilon :=\{ Q\in { \mcal U(P,r')} \, : \, f_\epsilon (Q)=0 \} \supset \Phi _\epsilon (\mathcal I(A,\delta))$ is an Euclidean $\C^1$ surface. Hence, because $f_\epsilon \in \C^1({ \mcal U(P,r')})\subset \C^1_\G({ \mcal U(P,r')})$ and because of Theorem \ref{teo4.1} we have that  $\hat \psi _\epsilon $ (i.e. the parametrization of $S_\epsilon$) is uniformly intrinsic differentiable and by \eqref{DPHI2}, \eqref{alDpsicDDD1} and \eqref{alDpsicDDD2} we get
\[
\nabla ^{\hat \psi _\epsilon } \hat \psi_\epsilon  (i(\cdot )) = - \frac{ \widehat{\nabla _\G }f _\epsilon  }{  X_1 f_\epsilon  } \circ \Phi _\epsilon (\cdot ) \to \nabla ^{\hat \psi  } \hat \psi (i(\cdot ))
\] 
uniformly on  $\mathcal I(A,\delta)$ as $\epsilon \to 0$.

Differentiating  the equality $f_\epsilon \big( i(B)\cdot \hat \psi_\eps(i(B)) \big)= f_\epsilon(\Phi _\epsilon (B)) = 0$, for $B=(x_2,\dots, x_m, y_1,\dots, y_n)\in \mathcal I(A,\delta)$, we get
\[
\begin{array}{lr}
\displaystyle\partial _{x_j} \psi _\epsilon(B)  = -\frac{\partial _{x_j} f_\epsilon(\Phi _\epsilon (B))- \frac{1}{2} \psi_\epsilon(B) \sum_{s=1}^{n} b^{s}_{j1} \partial _{y_s} f_\epsilon(\Phi _\epsilon (B))}{X_1f_\epsilon(\Phi _\epsilon (B))}, &\,  j=2,\dots ,m    \\
 \displaystyle \partial _{y_s} \psi _\epsilon(B)  = -\frac{\partial _{y_s} f_\epsilon(\Phi _\epsilon (B))}{X_1f_\epsilon(\Phi _\epsilon (B))} , & s=1,\dots , n        
\end{array}
\]
Then

\begin{equation*}
\begin{aligned}
& - \frac{X_j f_\epsilon(\Phi _\epsilon (B))}{X_1 f_\epsilon(\Phi _\epsilon (B)) } \\
 & = - \frac{ \partial _{x_j} f_\epsilon (\Phi _\epsilon (B))+ \frac{1}{2} \sum_{s=1}^{n} \left( \psi_\epsilon (B) b^{s}_{j1}+ \sum_{l=2}^{m} x_l b^{s}_{jl}  \right) \partial _{y_s} f_\epsilon(\Phi _\epsilon (B))  } {X_1 f_\epsilon(\Phi _\epsilon (B))}  \\
 & \qquad \mbox{(here we use the skew symmetry of the matrices $\mathcal B^{(s)}$} )\\
  & = - \frac
  { \partial _{x_j} f_\epsilon (\Phi _\epsilon (B))- \frac{1}{2} \psi_\epsilon(B) \sum_{s=1}^{n} b^{s}_{j1} \partial _{y_s} f_\epsilon(\Phi _\epsilon (B))} 
  {X_1 f_\epsilon(\Phi _\epsilon (B))} \\
 &\qquad\quad\quad - \frac{\sum_{s=1}^{n} \left(\psi _\epsilon (B) b^{s}_{j1}+  \frac{1}{2}\sum_{l=2}^{m} x_l b^{s}_{jl}  \right) \partial _{y_s} f_\epsilon(\Phi _\epsilon (B))  } 
  {X_1 f_\epsilon(\Phi _\epsilon (B))}
   \\
&=\partial _{x_j}  \psi_\epsilon (B)+\sum_{s=1}^{n}\left( \psi_\epsilon (B) b^{s}_{j1}+\frac12\sum_ {l=2 }^{m}  x_l  b_ {jl }^{s}\right) \partial _{y_s}  \psi_\epsilon(B).
\end{aligned}  
\end{equation*}

Then, from \eqref{DPHI2} 

\begin{equation*}
D_j^{\psi_\epsilon} {\psi_\epsilon} (B)= \partial _{x_j} \psi_\epsilon(B)  +\sum_ {s=1 }^{n} \left(\psi_\epsilon (B)b_ {j1 }^{s} +\frac{1}{2 } \sum_ {l=2 }^{m}  x_l  b_ {jl}^{s} \right) \partial _{y_s}\psi_\epsilon (B).
\end{equation*}

Letting $\epsilon \to 0^+$ $\eqref{pde5} $ follows from  $\eqref{alDpsicDDD2}$ and so \[
\nabla^{\hat \psi} \hat\psi (i(\cdot))=\left(D_2^{\psi} {\psi}(\cdot ),\dots, D_m^{\psi} {\psi}(\cdot) \right)\]
holds in the sense of distribution. 

Finally using again Implicit Function Theorem in \cite{biblio7}, we know that
\[
|\partial \mathcal E|_\G (\mathcal F) = \int _{\Phi^{-1}(\mathcal F)} \frac{{ |\nabla_\G f(\Phi (B))|_{\R^{m+n-1}}}}{X_1f(\Phi (B))} \, d \mathcal{L}^{m+n-1} (B)
\]
and, consequently, the integral representation of the perimeter $|\partial\mathcal E|_\G$ is true because, from \eqref{DPHI2}, $D^\psi _j\psi = - \frac{X_j f}{X_1 f}  \circ \Phi  $, in the sense of distributions, for $j=2,\dots ,m$. This completes the proof of (ii). 
\end{proof} 

\begin{rem}
We emphasize that the intrinsic horizontal gradient $\nabla^{\hat\psi}$ defined after Definition \ref{defintder} and the continuous representative of the distributional intrinsic gradient as in \eqref{teo4.1.1} are different; but according to the approximation argument of Proposition \ref{prop2.22}, they are equal in distributional sense.
 \end{rem}

In the following proposition we prove that, if $\psi$ is sufficiently regular, the  intrinsic derivatives  $D_j^\psi \psi$ are  the derivatives of $\psi $ along the integral curves of the vector fields $D^\psi _j$. We use a similar technique exploited in Proposition 3.7 in \cite{biblio1} in Heisenberg groups.

\begin{prop}\label{3.7}
Let $\mcal O $ be open in $\W$, $A\in \mcal O$ and  $\hat\psi=(\psi,0,\dots,0) :\mcal O \to \V$. For $j=2,\dots ,m$ and $\delta >0$ denote  $\gamma ^j :[-\delta ,\delta ]\to \mcal O $ an integral curve of the vector field $D^\psi _j$ such that $\gamma ^j(0)=A$. 

If we assume that 
\[
\hat\psi \text{ is u.i.d. in $A\in \mathcal O$ and }
t \mapsto \psi (\gamma ^j(t))
\text{ 
is in $\C^1(-\delta ,\delta)$},
\] 
then 
\[
\lim_{t\to 0} \frac{\psi \bigl(\gamma ^j(t)\bigl)-\psi (A)}{t}= (\nabla^{\hat\psi} \hat \psi)_j(A),
\]
\end{prop}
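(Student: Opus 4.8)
The plan is to compute the derivative $\frac{d}{dt}\psi(\gamma^j(t))\big|_{t=0}$ directly from the defining ODE of the characteristic curve and the uniform intrinsic differentiability of $\hat\psi$ at $A$. Write $P:=\Phi(A)=i(A)\cdot\hat\psi(i(A))$ and set $B_t:=\gamma^j(t)\in\mathcal O\subset\W$, so that $B_0=A$. The first step is to estimate $i(A)^{-1}i(B_t)$ in the group $\G$: since $\gamma^j$ solves $\dot\gamma^j=D^\psi_j(\gamma^j)$ with $D^\psi_j={X_j}_{|\W}+\psi\sum_s b^s_{j1}{Y_s}_{|\W}$, a Taylor expansion of the flow at $t=0$ gives $B_t = A + t\,D^\psi_j(A) + o(t)$ in Euclidean coordinates, hence $\|i(A)^{-1}i(B_t)\| = O(|t|)$ and $(i(A)^{-1}i(B_t))^1 = t\,e_j + o(t)$ in the first layer, where $e_j$ denotes the $j$-th horizontal coordinate direction restricted to $\W^1$.

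Next I would invoke Proposition \ref{prop1.1.1}: because $\hat\psi$ is u.i.d. at $A$ (in particular intrinsic differentiable at $A$), and both $\hat\psi$ and $d\hat\psi_A$ take values in the horizontal subgroup $\V$, we have
\[
\hat\psi(i(B_t)) - \hat\psi(i(A)) = d\hat\psi_A\big(i(A)^{-1}i(B_t)\big) + o\big(\|\hat\psi(i(A))^{-1}i(A)^{-1}i(B_t)\hat\psi(i(A))\|\big).
\]
Since $\psi$ is bounded near $A$, inequality \eqref{17FS} (with $P=i(A)^{-1}i(B_t)$, $Q=\hat\psi(i(A))$) shows the error term is $o(|t|^{1/2})$, but in fact the stronger bound $o(|t|)$ is not what we need — we only need $o(1)$ after dividing by $t$... let me be careful. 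The quasi-distance $\|\hat\psi(i(A))^{-1}i(A)^{-1}i(B_t)\hat\psi(i(A))\|$ is, by \eqref{17FS} and boundedness of $\psi$ near $A$, at most $C|t|^{1/2}$; dividing the error term by $t$ would give $o(|t|^{-1/2})$, which is not enough. The resolution is to use the \emph{little-H\"older regularity already built into the hypothesis}: by assumption $t\mapsto\psi(\gamma^j(t))$ is $\C^1$, hence the difference quotient $\frac{\psi(\gamma^j(t))-\psi(A)}{t}$ converges, so I only need to \emph{identify} the limit, and for that it suffices to test along any sequence $t\to 0$. Alternatively, and more cleanly, I would use the fact that u.i.d. implies $\hat\psi\in h^{1/\kappa}_{\mathrm{loc}}$ by Proposition \ref{prop3.6cont}(2), which controls the oscillation of $\psi$ along $\gamma^j$ at the right order, so that the quasi-distance appearing in the error term is in fact comparable to $|t|$ up to $o(|t|)$ — indeed $\|\hat\psi(i(A))^{-1}i(A)^{-1}i(B_t)\hat\psi(i(A))\| \ge c\,|t|$ is false in general, so the honest route is: since the $\C^1$ hypothesis guarantees the limit exists, evaluate $d\hat\psi_A\big((i(A)^{-1}i(B_t))^1\big) = d\hat\psi_A(t\,e_j+o(t)) = t\,d\hat\psi_A(e_j) + o(t)$ using that $d\hat\psi_A$ is Euclidean linear on $\W^1$ (Proposition \ref{p2.5}), and note that $d\hat\psi_A(e_j)$ is precisely the $j$-th component of $\nabla^{\hat\psi}\hat\psi(A)$ by \eqref{comefattoequation}.

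Putting these together: along $\gamma^j$,
\[
\psi(\gamma^j(t)) - \psi(A) = t\,(\nabla^{\hat\psi}\hat\psi)_j(A) + o(t) + \big(\text{error of size } o(|t|^{1/2})\big),
\]
and since by hypothesis the left-hand side is $t\,\frac{d}{dt}\psi(\gamma^j(t))\big|_{t=0} + o(t)$ with the derivative existing, the $o(|t|^{1/2})$ term must actually be $o(t)$ (it is the difference of two quantities each of the form $ct+o(t)$), and comparing coefficients of $t$ yields $\frac{d}{dt}\psi(\gamma^j(t))\big|_{t=0} = (\nabla^{\hat\psi}\hat\psi)_j(A)$, which is the claim. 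The main obstacle I anticipate is exactly this mismatch of scales — the intrinsic differential controls errors relative to the $\phi$-dependent quasi-distance, which scales like $|t|^{1/2}$ along a vertical-ish direction, while we want an error $o(|t|)$; the $\C^1$ hypothesis on $t\mapsto\psi(\gamma^j(t))$ is precisely the extra input that rescues the argument, and care is needed to use it correctly (one should phrase the final comparison as: two $\C^1$-in-$t$ expressions agreeing up to $o(|t|^{1/2})$ must have equal derivatives at $0$, since their difference is $\C^1$ — no, $o(|t|^{1/2})$ need not be differentiable — so instead: the difference quotient of the left side converges by hypothesis, the right side's difference quotient converges to $(\nabla^{\hat\psi}\hat\psi)_j(A)$ plus possibly something unbounded; the correct fix is to observe the error term is $o\big(\|i(A)^{-1}i(B_t)\|\big)\cdot(\text{bounded})$ combined with $\|i(A)^{-1}i(B_t)\| = O(|t|)$ from the ODE in \emph{Euclidean} coordinates, giving $o(|t|)$ directly after all, because $i(A)^{-1}i(B_t)$ stays in a fixed compact set and the homogeneous norm there is bounded by $C|i(A)^{-1}i(B_t)|_{\R^N}^{1/\kappa}$ — wait, that worsens it). I would resolve this by working out the horizontal-component estimate carefully: the quantity $\|\hat\psi(i(A))^{-1}(i(A)^{-1}i(B_t))\hat\psi(i(A))\|$ equals, by the class-$\B$ group law and the explicit conjugation formula in the proof of Proposition \ref{prop3.6cont}(3), a vector whose first-layer part is exactly $(i(A)^{-1}i(B_t))^1 = t\,e_j + o(t)$ and whose second-layer part, after conjugation by the horizontal element $\hat\psi(i(A))$, gets a correction that is \emph{linear} in the first-layer part — precisely the correction that the intrinsic derivative $D^\psi_j$ was designed to absorb — so that the net quasi-distance is $O(|t|)$, not $O(|t|^{1/2})$, along the characteristic direction. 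This is the heart of why characteristics are the right curves, and it is the one place the special structure of class $\B$ groups and the definition of $D^\psi_j$ genuinely enter; once it is established, the limit identification is immediate.
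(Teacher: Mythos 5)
Your plan has the right overall architecture — reduce to the intrinsic-differentiability estimate along the curve, show the $\phi$-dependent quasi-distance $\|\hat\psi(A)^{-1}A^{-1}\gamma^j(t)\hat\psi(A)\|$ is $O(|t|)$, and then identify $d\hat\psi_A((A^{-1}\gamma^j(t))^1)=t(\nabla^{\hat\psi}\hat\psi)_j(A)$. This matches the paper's approach. However, the crucial step — that the quasi-distance is $O(|t|)$ — is asserted in your last paragraph but never actually proved, and the mechanism you offer for it is not correct. You describe the conjugation by $\hat\psi(A)$ as producing a second-layer correction ``linear in the first-layer part — precisely the correction that $D^\psi_j$ was designed to absorb,'' suggesting an exact algebraic cancellation. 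What actually happens is a near-cancellation: the second-layer component $s$ of $\hat\psi(A)^{-1}A^{-1}\gamma^j(t)\hat\psi(A)$ works out, using \eqref{1} and the skew-symmetry of $\mathcal B^{(s)}$, to
\[
b_{j1}^s\int_0^t\bigl(\psi(\gamma^j(r))-\psi(A)\bigr)\,dr,
\]
which is not zero. Showing it is $O(t^2)$ — so that its $1/2$-power contribution to the homogeneous norm is $O(|t|)$ — is exactly where the hypothesis that $t\mapsto\psi(\gamma^j(t))$ is $\C^1$ enters: it gives $\psi(\gamma^j(r))-\psi(A)=O(r)$, hence the integral is $O(t^2)$. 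If $\psi\circ\gamma^j$ were merely continuous you would only get $o(t)$ inside, hence $o(|t|^{1/2})$ for the norm contribution, and the division by $t$ at the end would not be controlled. You spend several lines speculating about how the $\C^1$ hypothesis ``rescues the argument'' (via existence of the limit, via identification along subsequences, via little-H\"older regularity), but the actual, indispensable use of $\C^1$ is this single Taylor/integral bound, which your writeup never pins down.

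Two smaller inaccuracies: first, the horizontal part of $A^{-1}\gamma^j(t)$ is \emph{exactly} $t e_j$ (the characteristic curve has affine horizontal components — see the explicit formula for $\gamma^j$); the ``$+o(t)$'' you append is spurious. Second, your parenthetical ``$\|\hat\psi(i(A))^{-1}i(A)^{-1}i(B_t)\hat\psi(i(A))\|\ge c|t|$ is false in general'' is itself false: by \eqref{deps} the homogeneous norm of any element dominates a constant times the Euclidean norm of its first-layer part, and here that first-layer part is exactly $t e_j$, so the lower bound $\ge c_1^{-1}|t|$ always holds. Neither of these affects the main issue, but both reflect the fact that the quasi-distance estimate — the heart of the proposition — was guessed at rather than computed. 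To close the gap, write $A^{-1}\gamma^j(t)$ and the conjugate in coordinates, isolate the second-layer residual $b_{j1}^s\int_0^t(\psi(\gamma^j(r))-\psi(A))\,dr$, invoke the $\C^1$ hypothesis to bound it by $O(t^2)$, and then divide by $t$ in the intrinsic-differentiability estimate.
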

where with $(\nabla^{\hat\psi} \hat \psi)_j$ we mean the $j$-th component of a representation of the intrinsic differential as in \eqref{teo4.1.1}.
\begin{proof}
Fix $j=2,\dots , m$. Let $A=(x, y)\in \mcal O $ such that $\gamma ^j(0)=A$. We know that $\gamma ^j(t)=\bigl(\gamma _2^j(t),\dots ,\gamma _{m+n}^j(t)\bigl)$ is given by 
\begin{equation}\label{int}
\gamma _h^j(t)=\left\{ 
\begin{array}{lcl}
x_h  &   & h=2,\dots, m\, ,\, h\ne j  \\
x_h +t &   &  h= j \\
 y_h +\frac{1}{2}t\sum_{l=2}^{m} x_l b_{jl}^{h} + b_{j1}^{h} \int_0^t \psi (\gamma ^j(r))\, dr &   &  h=m+1,\dots ,m+ n
\end{array}
\right.
\end{equation}
and then
\begin{equation}\label{ogrande}
\begin{split}
\norm{\hat\psi \bigl(\gamma ^j(t)\bigl)-\hat\psi (A) - d\hat\psi_A \big(A^{-1} \gamma ^j (t)\big)}&=\big|\psi \bigl(\gamma ^j(t)\bigl)-\psi (A) - (\nabla^{\hat\psi} \hat \psi)_j (A) t\big| \\
\norm{\hat\psi(A)^{-1}\cdot A^{-1}\cdot \gamma ^j(t)\cdot \hat\psi(A)}&\leq C |t|.
\end{split}
 \end{equation}
for an appropriate $C>1$. We notice that in the first equality we use the fact that $d\hat\psi_A$ acts only on the first components (see \eqref{teo4.1.1}) and that the homogeneous norm on the first layer is the Euclidean norm. 
Indeed we have
\begin{equation*}
\begin{aligned}
c_1^{-1}\norm{\hat\psi(A)^{-1}\cdot A^{-1}\cdot \gamma ^j(t)\cdot \hat\psi(A)} & \leq |t|+ \sum_{s=1}^{n} \biggl| b_{j1}^{s} \int_0^t \psi (\gamma ^j(r))\, dr - t b_{j1}^{s}  \psi (A) \biggl|^{1/2} \\
& \,\, = |t|+ \sum_{s=1}^{n} \biggl| b_{j1}^{s} \int_0^t \left( \psi (\gamma ^j(r))-  \psi  (A) \right) \, dr  \biggl|^{1/2} 
\end{aligned}
\end{equation*}
Moreover for $h=m+1,\dots ,m+n$, the map $t \mapsto \gamma _h^j(t)$ is of class $\C^2$ (because of $\eqref{int}$ and the hypothesis that $t \mapsto \psi (\gamma ^j(t))$ is $\C^1$), hence for all $s=1,\dots ,n$
\begin{equation*}
\begin{aligned}
b_{j1}^{s}\int_0^t \left( \psi (\gamma ^j(r))-  \psi  (A) \right) \, dr  = O(t^2).
\end{aligned}
\end{equation*}
Hence $\eqref{ogrande}$ holds with a appropriate $C= C(c_1)>0$, and from $\eqref{ogrande}$ we get
\begin{equation*}
\frac{\big|\psi \bigl(\gamma ^j(t)\bigl)-\psi (A) - (\nabla^{\hat\psi} \hat \psi)_j (A) t\big|}{t}\, 
\leq \, C \;\frac{\big|\hat\psi \bigl(\gamma ^j(t)\bigl)-\hat\psi (A) - d\hat\psi_A \big(A^{-1} \gamma ^j (t)\big)\big|}{\Vert{\hat\psi(A)^{-1}\cdot A^{-1}\cdot \gamma ^j(t)\cdot \hat\psi(A)}\Vert}
\end{equation*}
where $d\hat\psi _A$ is the intrinsic differential of $\hat\psi$ at $A$. By letting $t \to 0$ and using the assumption of intrinsic differentiability of $\hat\psi$ at $A$, we obtain the thesis.
\end{proof}

\subsection{Broad$^\ast$ solutions and $\nabla^\psi$-exponential maps}
In this section we prove a converse of the previous results. In sections 3 and  4 we proved that if $\graph {\hat{\psi}}$ is a $\G$-regular hypersurface  then the intrinsic derivatives $D^{\psi}_j\psi$  are continuous.  Here we show that if $\psi:\mathcal O\subset \R^{m+n-1}\to \R$ is a 
continuous solution in an open set $\mathcal O$ of the non linear first order system
\begin{equation}\label{sistema}
\left(D^{\psi}_2\psi,\dots, D^{\psi}_m\psi\right)= w 
\end{equation}
where $w:\mathcal O\to \R^{m-1}$ is a given continuous function then $\hat\psi: \hat{\mathcal O}\to \V$ is uniformly intrinsic differentiable in $\hat{\mathcal O}$ and consequently its graph is a $\G$-regular hypersurface. The main equivalence result is contained in Theorem \ref{finale1}.
 
In order to state the equivalence result we have to be precise about the meaning of being a solution of \eqref{sistema}.  To this aim we recall a  notion of generalized solutions of systems of this kind. These generalized solutions, denoted
\emph{broad* solutions} were introduced and studied for the system \eqref{sistema} inside Heisenberg groups in \cite{biblio1, biblio27}. For a more complete bibliography we refer to the bibliography in \cite{biblio1}.

\begin{defi}\label{defbroad*}
Let $\mcal O \subset  \R^{m+n-1}$ be open and $w:=\left(w_2,\dots,w_m\right):\mcal O \to \R^{m-1}$  a continuous function.
With the notations of Definition \ref{defintder}  
we say that $\psi\in \C(\mcal O, \R)$ is a \emph{broad* solution} in $\mcal O$ of the system
\[
\left(D^{\psi}_2\psi,\dots, D^{\psi}_m\psi\right)= w \]
 if for every $A\in \mcal O$ there are $0< \delta_2 < \delta_1$ and $m-1$ maps $\exp _A(\cdot D^\psi _j)(\cdot)$
\begin{equation*}
\begin{split}
\exp _A(\cdot D^\psi _j)(\cdot):[-\delta _2,\delta _2]&\times  \mathcal I(A,\delta_2)\to \mathcal I(A, \delta_1)\\
(t,&B)    \mapsto  \exp _A(tD^\psi _j)(B )
\end{split}
\end{equation*}
for $j=2,\dots , m$, where  $ \mathcal I(A, \delta):= \mathcal U(A,\delta)\cap \W$ and $\mathcal I(A, \delta_1)\subset\mathcal O$. Moreover these maps, called \emph{exponential maps} of the vector fields $D^\psi _2,\dots,D^\psi _m$, are required to have the following properties 
\[t\mapsto \gamma^j_B (t):=\exp_A (tD^\psi _j)(B) \in \C^1 ([-\delta _2,\delta _2], \R^{m+n-1})\]
for all $B\in  \mathcal I(A,\delta_2)$ and 
\[\left\{
\begin{array}{l}
\dot \gamma ^j_B =D^\psi _j \circ \gamma ^j_B\\
\gamma ^j_B (0)=B
\end{array}
\right.
\]
\[
\psi (\gamma ^j_B (t))-\psi (\gamma ^j_B (0))=\int_0^t w_j (\gamma ^j_B (r))\, dr \quad \mbox{ for } t\in [-\delta_2, \delta_2]
\]
once more 
for all $B\in  \mathcal I(A,\delta_2)$.
\end{defi}

\begin{rem}\label{rem5.4} 
If the exponential maps of $D^\psi _2,\dots,D^\psi _m$ at $A$ exist, then the map
\begin{equation*}
[-\delta _2,\delta _2]\ni t \longmapsto \psi \biggl(\exp _A (tD^\psi _j )(B)\biggl)
\end{equation*}
is of class $\C^1$ for every $j=2,\dots , m$ and for each $B\in \mathcal I_{\delta _2 }(A)$.
\end{rem}


\begin{theorem}\label{finale1}
Let $\G$ be a group of class $\B$ and that 
\[
\V:=\{ (x_1,0\dots , 0) \}, \qquad \W:=\{ (0,x_2,\dots , x_{m+n}) \}.
\]
Let $\mcal O$  open in $\R^{m+n-1}$ and $\psi:\mathcal O\to \R$, $\hat\psi :\hat{\mcal O} \to \V $ be a continuous function. Then the following conditions are equivalent:
\begin{enumerate}
\item $\graph {\hat\psi} $ is a $\G$-regular hypersurface, i.e. for all $A\in \graph{\hat \psi}$ there is $r=r(A)>0$ and $f\in \mathbb C_{\G}^1(\mathcal U(A,r)),\R)$ with $X_1f>0$ such that
\[
\graph{\hat \psi}\cap \mathcal U(A,r)=\{ P:f(P)=0\}
\]
\item  $\hat\psi $ is u.i.d. in $ \hat {\mcal O}$.
\item there exists $w\in \C(\mcal O , \R ^{m-1})$ such that, if we consider the distribution $D_j^{\psi } {\psi } $ defined as in \eqref{pde5}, we have $\left(D^{\psi}_2\psi,\dots, D^{\psi}_m\psi\right)= w $ in the sense of distribution. Moreover, for $0<\eps<1$, there is a family of functions $\psi_\eps\in \mathbb C^1(\mathcal O,\R)$ such that for all $\mathcal O'\Subset \mathcal O$, 
\begin{equation*}
\psi _\epsilon \to \psi  \quad  and \quad  D_j^{{\psi _\epsilon}} {\psi _\epsilon} \to w
\end{equation*}
for $j=2,\dots, m$, uniformly on $\mcal O '$  as $\epsilon \to 0^+$.
\item $\psi  \in h^{1/2}_{loc}( \mcal O )$ $($see Proposition $\ref{prop3.6cont}$ $(2))$ and there exists $w\in \C(\mcal O , \R ^{m-1})$ such that $\psi $ is a broad* solution of 
\[
\left(D^{\psi}_2\psi,\dots, D^{\psi}_m\psi\right)= w 
\]
 in $\mcal O $.
\end{enumerate}
\end{theorem}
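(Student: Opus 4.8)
The plan is to establish the cycle of implications $(1)\Rightarrow(2)\Rightarrow(3)\Rightarrow(4)\Rightarrow(1)$, reusing as much of the earlier machinery as possible. The implication $(1)\Rightarrow(2)$ is a direct consequence of Theorem \ref{teo4.1} (with $k=1$, $\HH=\V$): a non-critical level set of $f\in\C^1_\G$ with $X_1 f>0$ is exactly a $\G$-regular hypersurface whose defining function satisfies the bijectivity hypothesis on $\HH$, so $\hat\psi$ is u.i.d. Conversely, for the last implication $(4)\Rightarrow(1)$ it would be more economical to first prove $(4)\Rightarrow(2)$ and then invoke Theorem \ref{teo4.1} again to get $(1)$; so really the core of the work is the two arrows $(2)\Rightarrow(3)$ and $(4)\Rightarrow(2)$ (the arrow $(3)\Rightarrow(4)$ being comparatively soft, see below).

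For $(2)\Rightarrow(3)$: if $\hat\psi$ is u.i.d. in $\hat{\mcal O}$, then by Theorem \ref{teo4.1} its graph is locally a non-critical level set of some $f\in\C^1_\G$ with (after normalization) $X_1 f>0$, and Proposition \ref{prop2.22}(i) already gives both that the distributional intrinsic gradient equals the continuous field $\nabla^{\hat\psi}\hat\psi$ and hence that $w:=(D^\psi_2\psi,\dots,D^\psi_m\psi)$ is continuous, and — crucially — the approximation $(f_\eps,\psi_\eps)$ constructed in the proof of Proposition \ref{prop2.22} via the Implicit Function Theorem of \cite{biblio7}. That proof shows $\psi_\eps\to\psi$ and $D^{\psi_\eps}_j\psi_\eps=-\,(X_jf_\eps/X_1f_\eps)\circ\Phi_\eps\to -\,(X_jf/X_1f)\circ\Phi=D^\psi_j\psi=w_j$ uniformly on $\mathcal I(A,\delta)$. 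A standard partition-of-unity patching of these local families then yields a single family $\psi_\eps\in\C^1(\mathcal O,\R)$ with the required convergence on every $\mathcal O'\Subset\mathcal O$.

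For $(3)\Rightarrow(4)$: the $h^{1/2}_{loc}$ regularity of $\psi$ follows once we know $(2)$ holds, via Proposition \ref{prop3.6cont}(2) with $\kappa=2$; but to stay inside the cycle one argues directly: from $\psi_\eps\to\psi$, $D^{\psi_\eps}_j\psi_\eps\to w_j$ uniformly on relatively compact sets, the $\C^1$ functions $\psi_\eps$ are classical solutions of $D^{\psi_\eps}_j\psi_\eps=w^\eps_j$ with $w^\eps\to w$, so by Proposition \ref{3.7} (applied to the smooth $\psi_\eps$, whose composition with any integral curve of $D^{\psi_\eps}_j$ is automatically $\C^1$) the derivative of $\psi_\eps$ along integral curves of $D^{\psi_\eps}_j$ is $w^\eps_j$; passing to the limit along these characteristic curves (Ascoli–Arzelà on the curves, uniform convergence of the fields) produces the exponential maps $\exp_A(tD^\psi_j)$ and the integral identity $\psi(\gamma^j_B(t))-\psi(B)=\int_0^t w_j(\gamma^j_B(r))\,dr$ of Definition \ref{defbroad*}; the little-Hölder bound is obtained by integrating along chains of such characteristics and using the continuity (hence local boundedness) of $w$ together with the structure \eqref{int} of the curves in a step-$2$ group, exactly as in \cite{biblio1}. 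Finally $(4)\Rightarrow(2)$: given a broad* solution $\psi$ that is $h^{1/2}_{loc}$, one shows $\hat\psi$ is u.i.d. by testing the defining limit of Proposition \ref{prop1.1.1}; the candidate intrinsic differential at $A_0$ is the Euclidean-linear map with matrix $w(A_0)=\nabla^{\hat\psi}\hat\psi(A_0)$, and the increment $\psi(B)-\psi(A)-w(A_0)(A^{-1}B)^1$ is estimated by connecting $A$ to $B$ through a short concatenation of the characteristic curves $\gamma^j$, writing $\psi(B)-\psi(A)$ as a sum of integrals $\int w_j(\gamma^j(r))\,dr$, comparing each $w_j(\gamma^j(r))$ with $w_j(A_0)$ (using continuity of $w$ and that all points stay within $O(r)$ in the homogeneous distance), and absorbing the correction terms coming from the $y$-components of the curves into $\|\phi(A)^{-1}A^{-1}B\phi(A)\|$ via the $h^{1/2}_{loc}$ control of $\psi$; the $h^{1/2}$ hypothesis is precisely what makes the remainder $o(\|\phi(A)^{-1}A^{-1}B\phi(A)\|)$ uniformly in $A$ near $A_0$.

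The main obstacle will be $(4)\Rightarrow(2)$, and inside it the delicate point is the uniformity in the base point $A$ (not just at $A_0$) required by Definition \ref{Ndef1.1}: one must control the characteristic curves $\gamma^j_B$ and the integral identities uniformly for all base points $A$ in a neighbourhood of $A_0$, which forces a careful choice of the radii $\delta_1,\delta_2$ in the definition of broad* solution and a quantitative use of the continuity modulus of $w$ and of the $h^{1/2}_{loc}$ modulus of $\psi$. The step-$2$ restriction (groups of class $\mathcal B$) is what keeps the curves \eqref{int} explicit and the correction polynomials $\mathcal P^i$ of low degree, making these estimates tractable; this is exactly the reason the general Carnot-group case is left open. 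The remaining implications reduce, as indicated, to invocations of Theorem \ref{teo4.1}, Proposition \ref{prop2.22}, Proposition \ref{3.7}, and Proposition \ref{prop3.6cont}.
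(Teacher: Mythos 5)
Your route is the same as the paper's: $(1)\!\iff\!(2)$ via Theorem \ref{teo4.1}, $(2)\Rightarrow(3)$ via the local approximations built in Proposition \ref{prop2.22} patched together with a partition of unity, $(3)\Rightarrow(4)$ via an Ascoli--Arzel\`a compactness argument on the characteristic flows of the $D^{\psi_\eps}_j$ together with a separate little-H\"older estimate, and $(4)\Rightarrow(2)$ by concatenating characteristics to estimate the increment of $\psi$ and invoking $\psi\in h^{1/2}_{loc}$ to absorb the residual. The implications $(2)\Rightarrow(3)$, the exponential-map construction in $(3)\Rightarrow(4)$, and $(4)\Rightarrow(2)$ are accurately described and match the paper's proof in detail.

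The one genuine gap is your account of the little-H\"older estimate inside $(3)\Rightarrow(4)$. You write that the $h^{1/2}_{loc}$ bound is ``obtained by integrating along chains of such characteristics and using the continuity (hence local boundedness) of $w$.'' That mechanism controls the oscillation of $\psi$ only in the horizontal directions $x_2,\dots,x_m$ (this is Step 3 of the paper's Theorem \ref{teo5.9}). The crux is the \emph{vertical} control, i.e.\ bounding $|\psi(x,y)-\psi(x,\bar y)|$ by a vanishing multiple of $|y-\bar y|^{1/2}$, and this is not obtained by integrating along characteristics at all: the paper proves it by contradiction (Step 2 of Theorem \ref{teo5.9}). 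Assuming the estimate fails, one evolves the two curves $t\mapsto y_s(t)$ and $t\mapsto\bar y_s(t)$ starting from $(x,y)$ and $(x,\bar y)$ under the \emph{same} scalar ODE $\dot y_s=\tfrac12\sum_i x_i b^s_{ji}+b^s_{j1}\psi(\dots,y_s(\cdot),\dots)$ for a time $|t-x_j|\sim|y-\bar y|^{1/2}/\delta$, and shows using the second-derivative identity \eqref{5.21} and a quantitative modulus $\beta$ of continuity of $w$ that the two solutions would have to cross, which is impossible by ODE uniqueness. In particular mere local boundedness of $w$ is not enough (the threshold $\delta(r)$ depends explicitly on $\beta$), so the parenthetical ``hence local boundedness'' points in the wrong direction. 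Citing \cite{biblio1} is correct, but the description you give of what that argument does would not produce the estimate if carried out literally; this step is the single nontrivial analytic lemma of the theorem (isolated in the paper as Theorem \ref{teo5.9}) and needs to be stated as the comparison/contradiction argument, not as integration along characteristics.
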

\begin{proof}
We split the proof in several steps. We should cite as inspiration Theorem 5.1 in \cite{biblio1} for the proof of $\mathbf {(2) \implies (3)}$; Lemma 5.6 in \cite{biblio1} for the proof of $\mathbf {(3) \implies (4)}$ and Lemma 5.7 in \cite{biblio1} for the proof of $\mathbf {(4) \implies (2)}$. 

{$\mathbf {(1) \iff (2)}$}  see Theorem \ref{teo4.1}.

{$\mathbf {(2)\implies (3)}$}

In the proof of Proposition \ref{prop2.22} (i), we prove that for all $A\in \mcal O $ there are $\delta =\delta (A)>0$ with $\mathcal I(A,\delta) := \mcal U(A,\delta) \cap \mcal O $, a map $w\in \C(\mcal O , \R ^{m-1})$ and a family $(\psi _\epsilon )_{\epsilon >0} \subset \C^1 (\mathcal I(A,\delta) , \R)$ such that
\begin{equation}\label{5.12}
\psi _\epsilon \to \psi \,  \mbox{ and } \, D_j^{{\psi _\epsilon}} {\psi _\epsilon} \to w \qquad \mbox{ uniformly on }\,  \mathcal I(A,\delta), \, \,  \mbox{ as } \, \epsilon \to 0.
\end{equation}
Moreover, Proposition \ref{prop2.22} (i) states that the distribution $D^{{\psi}} {\psi}$ as \eqref{pde5} is equal to $w$ in the distributional sense.
%
%

From what proved up to now we know that  for all $B\in \mcal O $ there is $\delta=\delta (B)>0$ such that  $\mathcal I(B,\delta)\Subset  \mcal O $ and  there is family $\mathcal{A}= \mathcal A(B)$ of $\C^1 $ functions defined as $\mathcal{A}:=\{ \psi _{\epsilon , B} : {\mathcal I(B,\delta)} \to \R \} _{0<\epsilon <1}$ such that $\eqref{5.12}$ holds.

Now let $ \mathcal {\hat F}:= \{ \mbox {Int} (\mathcal I(B,\delta))  :B\in \mcal O \}$  be open covering of $\mcal O $. Then there exists a locally finite countable subcovering $ \mathcal {F}$ of $ \mathcal {\hat F}$ and $\{ \theta _h :\mcal O \to \R \, | \, h\in \N \}$  a partition of the unity  subordinate to $\mathcal{F}$. 

Let  $\psi _{\epsilon , B_h} \in \mathcal{A}$. We consider $\psi _{\epsilon , h}:=\psi _{\epsilon , B_h}:\R^{m+n-1} \to \R$ where from now on, if necessary, we use the convention of extending functions by letting them vanish outside an enlargement of their domain, to preserve the regularity. Let $\psi _\epsilon := \sum_{h=1}^{\infty}\theta_h \psi _{\epsilon , h}$; by construction $\psi _\epsilon \in \C^1 (\mcal O, \R )$ and 
\begin{equation*}
D_j^{ \psi _\epsilon } \psi _\epsilon = \sum_{h=1}^{\infty}\biggl( \psi _{\epsilon , h} D_j^{ \psi _\epsilon } \theta _h + \theta _h  D_j^{ \psi _\epsilon }\psi _{\epsilon , h} \biggl) \quad \mbox {  on } \mcal O 
\end{equation*}
Because the partition is locally finite, there are only a finite number of $h_1,\dots ,h_l$ such that $\overline{\mcal O '}\cap$ spt$\theta _{h_\tau } \ne \emptyset $ for each $\tau =1,\dots ,l$ and $\overline{\mcal O '}\subset \bigcup_{\tau =1}^l $spt$\theta _{h_\tau }$. Then
\begin{equation*}
\psi _\epsilon = \sum_{\tau =1}^{l} \theta _{h_\tau} \psi _{\epsilon, h_\tau} \, \mbox {   and   } \, \psi  = \sum_{\tau =1}^{l} \theta _{h_\tau } \psi \,   \mbox {  on  } \, \overline{\mcal O '}
\end{equation*}
\begin{equation*}
D_j^{ \psi _\epsilon } \psi _\epsilon = \sum_{\tau =1}^{l}\biggl( \psi _{\epsilon , h_\tau} D_j^{ \psi _{\epsilon } } \theta _{h_\tau }+ \theta _{h_\tau }  D_j^{ \psi _{\epsilon  }}\psi _{\epsilon , h_\tau } \biggl) \mbox {  on  } \, \overline{\mcal O '}
\end{equation*}
Putting together the last equalities and $\eqref{5.12}$ we get
\begin{equation*}
\psi _\epsilon \to \psi \quad \mbox {  and  } \quad D_j^{ \psi _\epsilon }  \psi _\epsilon \to \sum_{\tau =1}^{l}\Bigl( \psi D_j^{ \psi  }\theta _{h_\tau } + \theta _{h_\tau } D_j^{ \psi  }\psi \Bigl)= w_j \quad \mbox {  uniformly on  } \overline{\mcal O '}, \, \mbox {  as  } \epsilon \to 0
\end{equation*}
 This completes the proof of the implication $(2)\implies (3)$.

\medskip
$\mathbf{(3)  \implies  (4)}$

The proof of $\psi  \in h^{1/2}_{loc}( \mcal O )$ is the content of Theorem \ref {teo5.9}. 

To prove that $\psi$ is a broad$^\ast$ solution we have to show  that for each $A\in \mcal O $ there exist $\delta _1, \delta_2>0$ with $\delta_1>\delta_2$ such that for $j=2,\dots , m$ there is an exponential map $\exp_A(tD^\psi _j)(B)\in  \mathcal I (A,\delta_1) \Subset  \mcal O $ for all $(t,B)\in [-\delta_2,\delta_2]\times  \mathcal I (A,\delta_2) $; moreover,
\begin{equation*}
w_j(B)=\frac{d}{dt} \psi \big( \exp_A (tD^\psi _j)(B)\big)_{| t=0}
\end{equation*}
for all $B\in \mathcal I (A,\delta_2)$.

Fix $j=2,\dots , m$. For $\epsilon >0$ we consider the Cauchy problem
\begin{equation*}
\left\{
\begin{array}{l}
\dot \gamma ^j_{B, \epsilon } (t)= D^{\psi _\epsilon } _j ( \gamma ^j_{B, \epsilon } (t))\\ \\
\gamma ^j_{B, \epsilon } (0)= B
\end{array}
\right.
\end{equation*}
which has a solution $\gamma_\epsilon :[-\delta _2(\epsilon),\delta _2(\epsilon)]\times  \overline{\mathcal I(A, \delta_2 (\epsilon))}\to \overline{ \mathcal I (A,\delta_1)}$. By Peano's estimate on the existen\-ce time for solutions of ordinary differential equations we get that $\delta_2 (\epsilon)$ is greater than $C/\| \psi _\epsilon \|_{ \mathcal{L}^\infty ( \mathcal I(A, \delta_1 (\epsilon)) )} $, with $C$ depending only on $\delta _1$. So it is sufficient to take $\delta _2>0$ such that $\delta_2 \leq \delta_2 (\epsilon)$ for all $\epsilon$. Because $\gamma_\epsilon$ are uniformly continuous on the compact $[-\delta _2,\delta _2]\times \overline{ \mathcal I(A, \delta_2)}$, by Ascoli-Arzel\'a Theorem, we have a sequence $(\epsilon _h)_h$ such that $\epsilon _h \to 0$ as $h\to \infty$ and $\gamma _{ \epsilon _h}\to \gamma $ uniformly on $[-\delta _2,\delta _2]\times \overline{ \mathcal I(A, \delta_2 )}$. Obviously,
\begin{equation*}
\gamma_{ B, \epsilon _h} ^j(t)= B + \int_0^t D^{\psi _{ \epsilon _h} }_j (\gamma_{ B, \epsilon _h} ^j(r)) \, dr
\end{equation*}
\begin{equation*}
\psi _{ \epsilon _h}(\gamma_{B, \epsilon _h} ^j(t)) - \psi _{ \epsilon _h}(\gamma_{ B, \epsilon _h} ^j(0))= \int_0^t  D^{\psi _{ \epsilon _h} }_j \psi _{ \epsilon _h}(\gamma_{B, \epsilon _h} ^j(r))\, dr
\end{equation*}
and for $h\to \infty$ using that all the involved convergences are uniform we conclude
\begin{equation*}
\gamma ^j_B(t)= B + \int_0^t D^\psi _j (\gamma _B^j(r)) \, dr
\end{equation*}
\begin{equation*}
\psi (\gamma _B^j(t)) - \psi (\gamma _B^j(0))= \int_0^t w_j (\gamma _B^j(r))\, dr
\end{equation*}
i.e. the conditions of the Definition $\ref{defbroad*}$ are satisfied.

%
%

\medskip
$\mathbf{(4)\implies (2)}$.



%

Let us fix $A=(\bar x,\bar y)\in \mcal O $.
First let $B=( x,y),B'= ( x',y') \in \mathcal I(A, \delta )$ for a sufficiently small $\delta >0$.

Here we can not integrate along the vector field $D^\psi_j$; however this obstacle can be solved using the exponential maps, more precisely by posing
\begin{equation*}
B_i  :=\exp _A(\bar D_i)(B_{i-1} ) \quad \mbox{ for  }  i=2,\dots, m
\end{equation*} 
where $\bar D:=(\bar {D}_2,\dots,\bar D_m)$ is the family of vector fields given by $\bar {D}_j=(x'_j-x_j)D^\psi _j$ for $j\in \{2,\dots ,m\}$. A computation gives that 
\begin{equation*}
B_j=(x'_2,\dots ,x'_j,x_{j+1},\dots ,x_m,y^{B_j})
\end{equation*} 
with
\begin{equation*}
\begin{aligned}
y^{B_j}_s & = y_s+\sum_{l=2}^{j} \biggl(- b_{1l}^{s} \int_0^{x'_l-x_l} \psi \Big(\exp _A(rD^\psi _l )(B_{l-1})\Big)\, dr +\frac{1}{2}(x'_l-x_l)\Big(\sum_{ i=2 }^{l} x'_i b_{li}^{s}+\sum_{ i=l+1}^{m} x_i b_{li}^{s}\Big)  \biggl)\\
& = y^{B_{j-1}}_s - b_{1j}^{s} \int_0^{x'_j-x_j} \psi \Big(\exp_A(rD^\psi _j )(B_{j-1})\Big)\, dr +\frac{1}{2}(x'_j-x_j)\Big(\sum_{ i=2 }^{j} x'_i b_{ji}^{s}+\sum_{ i=j+1}^{m} x_i b_{ji}^{s}\Big) 
\end{aligned}
\end{equation*} 
for $s=1,\dots ,n$. Observe that $B_2,\dots ,B_m $ are well defined for a sufficiently small $\delta $.   Because $\psi$ is of class $\C^1$ (see Remark $\ref{rem5.4}$) we have
\begin{equation*}
\begin{aligned}
\psi (B') -\psi (B) & = [\psi (B') -\psi (B_m)]+ \sum_{ l=2 }^{m}[\psi (B_l) -\psi (B_{l-1})]\\
                          & = [\psi (B') -\psi (B_m)]+\sum_{ l=2 }^{m} \left(\bar D_l \psi (B_{l-1}) +o (|x'_l- x_l|)\right) \\
\end{aligned}
\end{equation*}
 Notice that, in the last equality, we used the fact 
\begin{equation*}
\begin{aligned}
\sum_{ l=2 }^{m}\bigl(\psi (B_l) -\psi (B_{l-1})\bigl) &=\sum_{ l=2 }^{m}\biggl(\int _0^1 (x'_l-x_l)w_l (\exp_A (r\bar D_l)(B_{l-1}))\, dr \biggl)\\
 & \qquad \mbox{(by the continuity  of $w$} )\\
& =\sum_{ l=2 }^{m} \left( (x'_l-x_l)w_l (B_{l-1}) +o (|x'_l- x_l|)\right)\\
\end{aligned}
\end{equation*}

Now, using again  the continuity of $w$, we have
\begin{equation*}
\lim_{\delta  \to 0} \frac{ \sum_{ l=2 }^{m} (x'_l-x_l)w_l (B_{l-1}) -\langle w(A),  x'- x \rangle}{|x'- x| _{\R^{m-1}}}=0   
\end{equation*}
and so 
\begin{equation*}
\begin{aligned}
\psi (B') -\psi (B) & = \psi (B') -\psi (B_m)+\langle w(A),  x'- x \rangle+ \left(\sum_{ l=2 }^{m}(x'_l-x_l)w_l (B_{l-1})  -\langle w(A),  x'- x \rangle \right)\\
& \quad  + o (|x'- x|_{\R^{m-1}}) \\
 & = \psi (B') -\psi (B_m)+ \langle w(A),  x'- x \rangle +  o ( \|\hat \psi(B)^{-1}B^{-1}B'\hat \psi(B)  \|). 
\end{aligned}
\end{equation*}
Consequently, it is sufficient to show that 
$
\psi (B') -\psi (B_m) = o ( \|\hat \psi(B)^{-1}B^{-1}B'\hat \psi(B)  \|).
$
First we observe that
\[
\frac {|\psi (B') -\psi (B_m)|}{  \|\hat \psi(B)^{-1}B^{-1}B'\hat \psi(B)  \| } \leq  C_\psi (\delta ) \frac {|y'-y^{B_m}|_{\R^{n}}^{1/2}}{  \|\hat \psi(B)^{-1}B^{-1}B'\hat \psi(B)  \|}
\]

with
\begin{equation*}\label{varpi}
C_\psi (\delta ) := \sup \biggl\{ \frac{|\psi (A') -\psi (A'')|}{\vert A' -A''\vert^{1/2}_{\R^{m+n-1}}  }\, :\, A' \ne A'', \, \, A', A''\in  \mathcal I(A,\delta ) \biggl\} .
\end{equation*}
We know also that $\lim_{\delta \to 0} C_\psi (\delta )=0$ because $\psi  \in h^{1/2}_{loc}( \mcal O )$. 
So it is evident that remains to prove $|y'-y^{B_m}|^{1/2} _{\R^{n}} / \|\hat \psi(B)^{-1}B^{-1}B'\hat \psi(B)  \|$ is bounded in a proper neighborhood of $A$. 

If we put $\mathcal{B}_{M} = \max \{ b_{ij}^{s} \, | \,  i,j=1,\dots , m \, , s=1,\dots , n \}$ (observe that $\mathcal{B}_{M}>0$ because the matrices $\B^{(s)}$ are skew-symmetric), then 
\begin{equation}\label{infinity}
\begin{aligned}
|y'-y^{B_m}|_{\R^{n}}    & \leq \sum_{ s=1}^{n} \biggl|y'_s-y_s+\sum_{l=2}^{m} \biggl( b_{1l}^{s} \int_0^{x'_l-x_l} \psi \Big(\exp_A(rD^\psi _l) (B_{l-1})\Big)\, dr +\\
&\quad -\frac{1}{2}(x'_l-x_l)\Big(\sum_{ i=2 }^{l} x'_i b_{li}^{s}+\sum_{ i=l+1}^{m} x_i b_{li}^{s}\Big)  \biggl) \biggl| \\
& \leq \sum_{ s=1}^{n} \biggl|y'_s- y_s +\psi (B)\sum_{ l=2 }^{m} (x'_l-x_l) b_{1l}^{s} -\frac{1}{2}\langle \mathcal{B}^{(s)} x, x'- x \rangle \biggl|\\
& \quad + \sum_{ s=1}^{n} \biggl| -\frac{1}{2}(x'_l-x_l)\Big(\sum_{ i=2 }^{l} x'_i b_{li}^{s}+\sum_{ i=l+1}^{m} x_i b_{li}^{s}\Big) + \frac{1}{2} \langle \mathcal{B}^{(s)}  x,x'- x \rangle \biggl| \\
& \quad+\sum_{ s=1}^{n} \biggl| -\psi (B)\sum_{ l=2 }^{m} (x'_l-x_l) b_{1l}^{s} +\sum_{l=2}^{m}  b_{1l}^{s} \int_0^{x'_l-x_l} \psi \Big(\exp_A(r D^\psi _l )(B_{l-1})\Big)\, dr \biggl| \\
& \leq c_1 \|\hat \phi(i(B))^{-1}i(B)^{-1}i(B')\hat \phi(i(B))  \|^2 + \frac{1}{2}n\mathcal{B}_M|x'-x|^2_{\R^{m-1}} \\
&  \quad +\sum_{ s=1}^{n} \biggl| -\psi (B)\sum_{ l=2 }^{m} (x'_l-x_l) b_{1l}^{s} 
 +\sum_{l=2}^{m}  b_{1l}^{s} \int_0^{x'_l-x_l} \psi \Big(\exp_A(r D^\psi_l) (B_{l-1})\Big)\, dr \biggl| \\
\end{aligned}
\end{equation}
where $c_1 $ is given by \eqref{deps}.  
Note that we have used
\begin{equation*}
\begin{aligned}
\Big| \frac{1}{2} \langle \mathcal{B}^{(s)} x,  x' & - x\rangle -\frac{1}{2}(x'_l -x_l)\Big(\sum_{ i=2 }^{l} x'_i b_{li}^{s} +\sum_{ i=l+1}^{m} x_i b_{li}^{s}\Big) \Big| \\
&=\left| -\frac{1}{2}(x'_l-x_l)\Big(\sum_{ i=2 }^{l} x'_i b_{li}^{s}+\sum_{ i=l+1}^{m} x_i b_{li}^{s} -\sum_{ i=2}^{m} x_i b_{li}^{s} \Big) \right| 
 \leq \frac{1}{2}\mathcal{B}_M|x'-x|^2_{\R^{m-1}} .
\end{aligned}
\end{equation*}

Finally, we consider the last term of \eqref{infinity}
\begin{equation*}
\begin{aligned}
&  \sum_{ s=1}^{n} \biggl| -\psi (B)\sum_{ l=2 }^{m} (x'_l-x_l) b_{1l}^{s} 
 +\sum_{l=2}^{m}  b_{1l}^{s} \int_0^{x'_l-x_l} \psi \Big(\exp_A(r D^\psi_l )(B_{l-1})\Big)\, dr \biggl| \\
 &\qquad \leq R_1(B,B')+R_2(B,B')
\end{aligned}
\end{equation*}
where
\begin{equation*}
\begin{aligned}
R_1(B,B')& :=  \sum_{ s=1}^{n} \sum_{l=2}^{m}\biggl|   b_{1l}^{s} \int_0^{x'_l-x_l} \psi \Big(\exp_A(rD^\psi _l) (B_{l-1})\Big)\, dr  -  b_{1l}^{s} \psi (B_{l-1})(x'_l-x_l)   \biggl|\\
R_2(B,B') &:=  \sum_{ s=1}^{n} \biggl| \sum_{l=2}^{m} b_{1l}^{s}(x'_l-x_l) \Big( \psi (B_{l-1})-\psi (B)\Big)  \biggl| \\
\end{aligned}
\end{equation*}

We would show that there exist $C_1, C_2 >0$ such that
\begin{equation}\label{r1}
R_1(B,B') \leq C_1 |x'-x|^2_{\R^{m-1}} 
\end{equation}
\begin{equation}\label{r2}
R_2(B,B') \leq C_2 |x'-x|^2_{\R^{m-1}} 
\end{equation}
for all $B,B'\in \mathcal I(A,\delta )$, and consequently \begin{equation*}
|y'-y^{B_m}|_{\R^{n}}   \leq  c_1 \|\hat \phi(i(B))^{-1}i(B)^{-1}i(B')\hat \phi(i(B))  \|^2 + \left( \frac{1}{2} n\mathcal{B}_M   +C_1+C_2 \right) |x'-x|^2_{\R^{m-1}}  
\end{equation*}
Hence there is $ C_3>0$ such that
\begin{equation*}
|y'-y^{B_m}|^{1/2} _{\R^{n}}  \leq  C_3  \|\hat \psi(B)^{-1}B^{-1}B'\hat \psi(B)  \|
\end{equation*}
which is the thesis.

We start to consider $R_1(B,B')$. Fix $l=2,\dots , m$ and $s=1,\dots , n$. For $t\in [-\delta , \delta ]$ 
we define 
\begin{equation*}
g_l^s (t):=b_{1l}^{s} \biggl(\int_0^t \psi (\exp_A (rD^\psi_l)(B_{l-1}))\, dr - t \psi (B_{l-1})\biggl)
\end{equation*}
working as in Proposition $\ref{3.7}$ we  show the existence of $C_l^s>0$ such that
\begin{equation*}
|g_l^s (t)| \leq C_l^s t^2, \hspace{0,5 cm} \forall t\in [-\delta , \delta ]
\end{equation*}
So set $t=x'_l-x_l$ we get
\begin{equation*}
|g_l^s (x'_l-x_l)| \leq C_l^s (x'_l-x_l)^2
\end{equation*}
and consequently $\eqref{r1}$ follows from 
\begin{equation*}
\sum_{l=2}^{m} \sum_{s=1}^{n}  |g_l^s (x'_l-x_l )| \leq \sum_{l=2}^{m} \sum_{s=1}^{n}  C_l^s (x'_l-x_l)^2 \leq   C_1|x'-x|_{\R^{m-1}} ^2.
\end{equation*}

Now we consider $R_2(B,B')$. Observe that
\begin{equation*}
\begin{aligned}
 \sum_{ s=1}^{n} \biggl| \sum_{l=2}^{m} b_{1l}^{s}(x'_l& -x_l) \Big( \psi (B_{l-1})-\psi (B)\Big)  \biggl|  \leq  n \mathcal{B}_M \sum_{l=3}^{m} |x'_l-x_l| \Bigl| \psi (B_{l-1})-\psi (B)  \Bigl| \\
& \leq n \mathcal{B}_M \sum_{l=3}^{m} |x'_l-x_l| \Big(\sum_{i=2}^{l-1}|\psi (B_i)-\psi (B_{i-1})| \Big)\\
& \leq  n \mathcal{B}_M \sum_{l=3}^{m} |x'_l-x_l| \Big( \sum_{i=2}^{l-1}\Big| \int_0^1 (x'_i-x_i)w_i (\exp _A(r\bar D_i)(B_{i-1}))\, dr \Big|\Big) \\
& \leq  n \mathcal{B}_M \sum_{l=3}^{m} |x'_l-x_l|  \Big(\sum_{i=2}^{l-1} \Big|(x'_i-x_i)(w_i( B_{i-1} )+o(1) )  \Big| \Big)\\
& \leq  n \mathcal{B}_M C |x'-x|^2_{\R^{m-1}} 
\end{aligned}
\end{equation*}
Then $\eqref{r2}$ follows with $ C_2:=\frac{1}{2}n \mathcal{B}_M C $. As a consequence the $(4)\implies (2)$  is proved.

\end{proof}

We prove now that  the solutions of the system $\left(D^{\psi}_2\psi,\dots, D^{\psi}_m\psi\right) =w$  when $w\in \C (\mcal O , \R^{m-1})$ are H\"older continuous.  We use the similar technique exploited in Theorem 5.8 and Theorem 5.9  in \cite{biblio1} in the context of Heisenberg groups.

\begin{theorem}\label{teo5.9}
Let $\psi \in \C(\mcal O, \R )$ where $\mcal O $ is open in $\R^{m+n-1}$. Assume  that  there exist $w:= (w_2,\dots , w_m)\in \C(\mcal O, \R^{m-1})$ 
and a family $(\psi _\epsilon)_{\epsilon>0 }\subset \C^1(\mcal O, \R )$ such that, for any open $\mcal O ' \Subset  \mcal O $,
\begin{equation*}
\psi _\epsilon \to \psi  \quad  and \quad  D^{\psi _\epsilon} \psi _\epsilon \to w \quad \text{ uniformly on } \, \, \, \mcal O ' ,\, \, \mbox{ as } \, \epsilon \to 0^+.
\end{equation*}

Then, for $\mcal O'\Subset  \mcal O ''\Subset  \mcal O $ there exists $\alpha :(0,+\infty )\to [0,+\infty )$ depending only on $\mcal O ''$, $\|\psi \|_{\mathcal{L}^\infty (\mcal O '')}$, $\| w \|_{\mathcal{L}^\infty (\mcal O '')}$,  on $\mathcal{B}^{(1)} , \dots , \mathcal{B}^{(n)}$ and on the modulus of continuity of $w$ on $\mcal O ''$ such that  
\begin{equation}\label{limitealpha}
\lim_{r\to 0}\alpha (r)=0
\end{equation}
and
\begin{equation}\label{betafin}
\sup \biggl\{  \frac{|\psi (A)-\psi  (A')|}{\vert A-A'\vert ^{1/2} _{\R^{m+n-1}} } : \, A,A'\in \mcal O ', 0<\vert A-A'\vert  _{\R^{m+n-1}}  \leq r \biggl\} \leq \alpha (r).
\end{equation}
\end{theorem}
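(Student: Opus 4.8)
The plan is to prove the estimate first for the $C^1$ approximants $\psi_\epsilon$, with a modulus that does not deteriorate as $\epsilon\to 0^+$, and then to pass to the limit using $\psi_\epsilon\to\psi$ uniformly on compact subsets of $\mathcal O$. Fix $\mathcal O'\Subset\mathcal O''\Subset\mathcal O$, put $M:=\sup_\epsilon\|\psi_\epsilon\|_{\mathcal L^\infty(\mathcal O'')}$ (finite, and $\le\|\psi\|_{\mathcal L^\infty(\mathcal O'')}+1$ for $\epsilon$ small) and $\delta_\epsilon:=\|(D^{\psi_\epsilon}_2\psi_\epsilon,\dots,D^{\psi_\epsilon}_m\psi_\epsilon)-w\|_{\mathcal L^\infty(\mathcal O'')}\to 0$. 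Since the coefficients of each $D^{\psi_\epsilon}_j$ (recall Definition~\ref{defintder}) are bounded on $\mathcal O''$ by a constant depending only on $M$ and the matrices $\mathcal B^{(s)}$, there is $T_0>0$, depending only on $\mathrm{dist}(\mathcal O',\partial\mathcal O'')$, $M$ and the $\mathcal B^{(s)}$, such that every integral curve of any $D^{\psi_\epsilon}_j$ — and every concatenation of at most $m$ of them of total parameter length $\le T_0$ — issued from a point of $\mathcal O'$ stays in $\mathcal O''$ for parameter time in $[-T_0,T_0]$. Along any integral curve $\gamma$ of $D^{\psi_\epsilon}_j$ one has $\tfrac{d}{dt}\psi_\epsilon(\gamma(t))=(D^{\psi_\epsilon}_j\psi_\epsilon)(\gamma(t))=w_j(\gamma(t))+O(\delta_\epsilon)$, hence $|\psi_\epsilon(\gamma(t))-\psi_\epsilon(\gamma(0))|\le(\|w\|_{\mathcal L^\infty(\mathcal O'')}+\delta_\epsilon)\,|t|$.

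For $A,A'\in\mathcal O'$ with $t:=|A-A'|_{\mathbb R^{m+n-1}}$ small, I would split the displacement into its horizontal part (the coordinates $x_2,\dots,x_m$) and its vertical part (the coordinates $y_1,\dots,y_n$). Exactly as in the construction of the points $B_i:=\exp_A(\bar D_i)(B_{i-1})$ in the proof of $(4)\Rightarrow(2)$ of Theorem~\ref{finale1}, one connects $A$ to a point $\tilde A$ having the horizontal coordinates of $A'$ by a concatenation of $m-1$ integral curves of the vector fields $D^{\psi_\epsilon}_j$ of total parameter length $\le C|x^A-x^{A'}|\le Ct$; by the previous paragraph $\psi_\epsilon$ varies along this path by at most $C(\|w\|_{\mathcal L^\infty(\mathcal O'')}+\delta_\epsilon)\,t$, and, reading off the flow formulas as in \eqref{int}, the vertical coordinates move by $O(t)$, so $\tilde A$ and $A'$ have the same horizontal coordinates and $v:=|y^{\tilde A}-y^{A'}|_{\mathbb R^n}\le Ct$. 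The horizontal part thus contributes at most $(\|w\|_{\mathcal L^\infty(\mathcal O'')}+\delta_\epsilon)\,t$, which is $o(t^{1/2})$ as $t\to 0$; it remains to estimate $|\psi_\epsilon(\tilde A)-\psi_\epsilon(A')|$, a purely vertical displacement.

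The heart of the proof is this vertical estimate, which I would carry out one coordinate at a time, reducing to two points $P,P'$ with the same horizontal coordinates and $y^{P'}-y^{P}$ a multiple of a single $e_s$, with $v_s:=|y^{P'}_s-y^{P}_s|$. Since $\G$ is step $2$ generated by $V_1$, the brackets $[X_j,X_k]=-\sum_s b^s_{jk}Y_s$, $1\le j<k\le m$, span $\G^2$; hence for each $s$ either $(i)$ $b^s_{j1}\ne 0$ for some $j\in\{2,\dots,m\}$, or $(ii)$ $b^s_{jk}\ne 0$ for some $j,k\in\{2,\dots,m\}$. In case $(i)$ I would flow both $P$ and $P'$ by $D^{\psi_\epsilon}_j$: the $C^1$ integral curves $\gamma_P,\gamma_{P'}$ never meet (uniqueness), and since they keep equal horizontal coordinates at every time the gap $V(\tau):=(\gamma_{P'})_{y_s}(\tau)-(\gamma_{P})_{y_s}(\tau)$ has constant sign and satisfies
\[
\dot V(\tau)=b^s_{j1}\big(\psi_\epsilon(\gamma_{P'}(\tau))-\psi_\epsilon(\gamma_{P}(\tau))\big)=b^s_{j1}\big(\Delta+E(\tau)\big),\qquad \Delta:=\psi_\epsilon(P')-\psi_\epsilon(P),
\]
where $|E(\tau)|\le|\tau|\big(\delta_\epsilon+\omega_w(C\sup_\rho|V(\rho)|)\big)$ and $\omega_w$ is the modulus of continuity of $w$ on $\mathcal O''$; integrating gives $V(\tau)=V(0)+b^s_{j1}\Delta\,\tau+O\big((\delta_\epsilon+\omega_w(Cv_s))\,\tau^2\big)$, and imposing $V(\tau)>0$ for $|\tau|\le T_0$ together with an optimisation in $\tau$ forces $|\Delta|\le C\big(\sqrt{\delta_\epsilon+\omega_w(Cv_s)}+v_s^{1/2}\big)\,v_s^{1/2}$, with $C$ depending on $T_0$, $M$ and $\mathcal B^{(s)}$. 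In case $(ii)$ a single $D^{\psi_\epsilon}_j$ cannot reach $e_s$, so I would use the commutator loop obtained by flowing successively $D^{\psi_\epsilon}_j,D^{\psi_\epsilon}_k,-D^{\psi_\epsilon}_j,-D^{\psi_\epsilon}_k$ for parameter time $h$: a computation along the lines of \eqref{int}--\eqref{infinity} shows it returns to the same horizontal coordinates and displaces $y$ by $-h^2(b^s_{jk})_s+O\big((\|w\|_{\mathcal L^\infty(\mathcal O'')}+\delta_\epsilon)\,h^2\big)$, so choosing $h\asymp v_s^{1/2}$ one reaches a point whose $y_s$-coordinate differs from $P$'s by $v_s$ up to lower order, while $\psi_\epsilon$ has changed by at most $C(\|w\|_{\mathcal L^\infty(\mathcal O'')}+\delta_\epsilon)\,v_s^{1/2}$; comparison with $P'$ then bounds $|\psi_\epsilon(P)-\psi_\epsilon(P')|$ the same way. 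Performing the (at most $n$) single-coordinate corrections sequentially, and absorbing the lower-order displacements a loop produces in the other vertical coordinates by a computation analogous to that of $|y'-y^{B_m}|$ in the proof of $(4)\Rightarrow(2)$ of Theorem~\ref{finale1}, yields $|\psi_\epsilon(\tilde A)-\psi_\epsilon(A')|\le\alpha_\epsilon(t)\,t^{1/2}$, where $\alpha_\epsilon$ depends only on $\mathcal O''$, $M$, $\|w\|_{\mathcal L^\infty(\mathcal O'')}$, the $\mathcal B^{(s)}$, $\omega_w$ and $\delta_\epsilon$. Adding the horizontal contribution and letting $\epsilon\to 0^+$ for each fixed pair $A,A'$ (so $\psi_\epsilon\to\psi$ pointwise and $\delta_\epsilon\to 0$) produces $\alpha:=\limsup_{\epsilon\to 0^+}\alpha_\epsilon$, which depends only on the data listed in the statement, satisfies $\lim_{r\to 0}\alpha(r)=0$ (because $\omega_w(Cr)\to 0$), and yields \eqref{betafin}.

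The step I expect to be the main obstacle is precisely the vertical estimate. In case $(i)$ one must make the non-crossing argument quantitative: keeping the horizon $T_0$ uniform in $\epsilon$, bounding $E(\tau)$ through $\omega_w$ while simultaneously using that $V$ stays comparable to $V(0)$ on $[-T_0,T_0]$ (a mild feedback that closes because $v_s$ is small), and verifying that the optimisation in $\tau$ really produces the power $v_s^{1/2}$ with the ``little'' improvement as $v_s\to 0$. In case $(ii)$ the difficulty is combinatorial: a commutator loop targeting $e_s$ also moves the other $y_{s'}$ by amounts of order $v_s$, so the $n$ corrections and their errors must be organised carefully — this is the delicate bookkeeping already carried out for $|y'-y^{B_m}|$ in Theorem~\ref{finale1}, which I would adapt here.
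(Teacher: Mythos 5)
Your overall plan — reduce to the $\C^1$ approximants $\psi_\epsilon$ with a uniform modulus and pass to the limit, split the displacement into a horizontal and a vertical part, flow along the $D^{\psi_\epsilon}_j$ and invoke ODE non-crossing for the vertical part — mirrors the paper's Steps 1--4 in Theorem \ref{teo5.9}. Your case (i) non-crossing argument for a vertical coordinate $y_s$ reachable by $[X_1,X_j]$ (i.e.\ $b^s_{j1}\neq 0$) is a reformulation of the paper's Step 2: the paper's contradiction hypothesis, the expansion \eqref{5.24}, the optimal choice $|t-x_j|=|y-\bar y|^{1/2}_{\R^n}/\delta$, and the resulting bound $\frac{3}{\mathcal B_m}\delta$ play exactly the role of your $V(\tau)=V(0)+b^s_{j1}\Delta\,\tau+O(\eta\tau^2)$ and its optimisation. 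What you genuinely add is a case (ii) (a commutator loop for the directions $e_s$ with $b^s_{j1}=0$ for all $j$ but $b^s_{jk}\neq 0$ for some $j,k\ge 2$), a case the paper dismisses only with the short parenthetical remark that ``if $b_{j1}^s=0$ for all $s,j$ then $w_j$ is smooth'', without addressing the mixed situation (some columns zero, some not) that does occur e.g.\ in $\mathbb F_{m,2}$ for $m\ge 3$.

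However, your case (ii) as written has a concrete gap. You bound the change of $\psi_\epsilon$ around the four-leg commutator loop by $C(\|w\|_{\mathcal L^\infty(\mcal O'')}+\delta_\epsilon)\,v_s^{1/2}$, obtained simply from $|\frac{d}{dt}\psi_\epsilon(\gamma(t))|\le \|w\|_\infty+\delta_\epsilon$ and total parameter length $\asymp v_s^{1/2}$. After dividing by $v_s^{1/2}$ this leaves the constant $C\|w\|_\infty$, which does \emph{not} tend to $0$ as $v_s\to 0$: you get a H\"older bound, not the little-H\"older bound $\lim_{r\to 0}\alpha(r)=0$ that the theorem requires for the vertical contribution (the paper's vertical Step 2 produces $\frac{3}{\mathcal B_m}\delta(\cdot)\to 0$; the lone $N r^{1/2}$ term in \eqref{defialfa0} comes from the \emph{horizontal} part only, where the extra factor $|x-x'|^{1/2}$ is available). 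To close case (ii) you must exploit the cancellation of the four line integrals of $D^{\psi_\epsilon}_j\psi_\epsilon$ and $D^{\psi_\epsilon}_k\psi_\epsilon$ over the opposite legs: the legs are $O(h)$ apart, so the change of $\psi_\epsilon$ around the loop is $O\big(h\,(\omega_w(Ch)+\delta_\epsilon)\big)$, not $O(h\|w\|_\infty)$. With $h\asymp v_s^{1/2}$ this is $o(v_s^{1/2})$ and the modulus goes to zero as intended. Until that cancellation is written out (together with a verification that the lower-order drifts a loop causes in the other $y_{s'}$ can be absorbed, which you flag but do not carry out), the argument does not prove the stated $h^{1/2}_{\rm loc}$ conclusion.

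Finally, a smaller point: in case (i) you assert that ``since they keep equal horizontal coordinates at every time the gap $V(\tau)$ has constant sign'', using only that the full curves $\gamma_P,\gamma_{P'}$ never meet. But non-intersection of the full curves in $\R^{m+n-1}$ does not, by itself, force the single coordinate $V=(\gamma_{P'})_{y_s}-(\gamma_P)_{y_s}$ to keep a fixed sign unless the other vertical gaps $(\gamma_{P'})_{y_{s'}}-(\gamma_P)_{y_{s'}}$ are identically zero (which requires $b^{s'}_{j1}=0$ for $s'\neq s$). This is the same subtlety lurking in the paper's appeal to the scalar Cauchy problem \eqref{sCp}; you should either restrict the choice of $j$ so that it decouples the $y_s$ equation, or make the comparison argument at the level of the full system.
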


\begin{proof}
It is sufficient to prove the theorem for  $\psi \in \C^1 (\mcal O, \R)$. Indeed 
from the uniform convergence of $\psi _\epsilon $ and  $\left(D_2 ^{\psi _\epsilon }\psi _\epsilon,\dots, D_m ^{\psi _\epsilon }\psi _\epsilon \right) $, we can estimate $\|\psi _\epsilon \|_{\mathcal{L}^\infty (\mcal O ')}$, on $\|\left(D_2 ^{\psi _\epsilon }\psi _\epsilon,\dots, D_m ^{\psi _\epsilon }\psi _\epsilon \right)  \|_{\mathcal{L}^\infty (\mcal O ')}$ uniformly in $\epsilon $ for any $\mcal O ' \Subset \mcal O$. Moreover the uniform convergence of $\left(D_2 ^{\psi _\epsilon }\psi _\epsilon,\dots, D_m ^{\psi _\epsilon }\psi _\epsilon \right) $ allows the choice of a modulus of continuity for $D^{\psi _\epsilon }_j \psi _\epsilon$ which is indipendent of $\epsilon $ for all $j$. Therefore there is $\alpha :(0,+\infty )\to [0,+\infty )$, not depending on $\epsilon $, such that  \eqref{limitealpha} and \eqref{betafin} follow.

  We are going to prove that for each point of $\mcal O'$ there are sufficiently small rectangular neigh\-borhoods $\mathcal I \Subset  \mathcal I'\Subset \mcal O $ and a function  $\alpha :(0,+\infty )\to [0,+\infty )$  such that $\lim_{r\to 0}\alpha (r)=0$ and 
\begin{equation}\label{alpha88}
\sup \biggl\{  \frac{|\psi (A)-\psi  (A')|}{|A-A'|^{1/2} _{\R^{m+n-1}} } : \, A,A'\in \mathcal I, 0< |A-A'|_{\R^{m+n-1}}  \leq r \biggl\} \leq \alpha (r).
\end{equation}
By a standard covering argument the general statement follows.


Precisely we are going to prove \eqref{alpha88} with $\alpha$ defined as 
\begin{equation}\label{defialfa0}
\alpha (r):= \frac{3 \left(1+h \right) }{\mathcal{B}_m} \delta (\max\{1, h^2\} r)+Nr^{1/2}
\end{equation}
where if we put  $$K:= \sup_{A=(x,y)\in \mathcal I'} \sum_{i=2}^{m}|x_i|,\quad M:=\|\psi \|_{\mathcal{L}^\infty (\mathcal I')}, \quad N:=\| w \|_{\mathcal{L}^\infty (\mathcal I')}$$ and  $\beta :(0,+\infty ) \to [0,+\infty )$, increasing,  such that $\lim_{r\to 0^+}\beta (r)=0$ and
\begin{equation*}
|w (A)-w (A')| \leq \beta (\vert A-A'\vert _{\R^{m+n-1}} )\qquad \text { for all  $A,A' \in \mathcal I'$}
\end{equation*}
then $h:=\left(n\mathcal{B}_M ( K+M) \right)^{1/2}$, $\mathcal{B}_m=\min \{  |b_{il}^{s}| : i,l=1,\dots ,m,\, s=1,\dots , n \mbox { and } b_{il}^{s} \ne 0 \}$, $\mathcal{B}_M=\max \{ b_{il}^{s} : i,l=1,\dots ,m \mbox { and }  s=1,\dots , n  \}$ and 
$$\delta (r):=\max \{ r^{1/4}; (\mathcal{B}_M \beta(Er^{1/4}))^{1/2} \}.$$
Here $E>0$ is a constant such that $  |y-y'|_{\R^{n}} + \mathcal{B}_M(K+2M) |y-y'|^{1/4}_{\R^{n}}  \leq E|y-y'|^{1/4}_{\R^{n}}  $ for all $(x,y), (x',y') \in \mathcal I'$. 

We split the proof in several steps.

$\mathbf{Step \, 1.}$
By standard considerations on ordinary differential equations, we know that for each point of $\mcal O'$ there are $r_0 >0$ and rectangular neighborhoods $\mathcal I \Subset  \mathcal I'\Subset \mcal O $ such that for all $A=(x,y)\in \mathcal I $ there is a unique solution $\gamma _A^j\in \C^1([x_j-r_0 ,x_j+r_0],\mathcal I')$ 
of the Cauchy problem
\begin{equation*}
\left\{
\begin{array}{l}
\dot \gamma_A ^j(t)= D^\psi _j(\gamma _A^j(t))= X_j \psi (\gamma _A^j(t))+\psi ( \gamma_A ^j(t)) \, \sum_{s=1}^{n} b_ {j 1}^{s} Y_s \psi (\gamma _A^j(t))\\
\gamma_A ^j(x_j)= A.
\end{array}
\right.
\end{equation*}
More precisely, 
\begin{equation}\label{5.21ok}
\begin{split}
&\gamma ^j_A(t)=\left(x_2,\dots ,x_{j-1}, t,x_{j+1},\dots ,x_m, y_{1,A}^{j}(t),\dots , y_{n,A}^{j}(t)\right), \qquad \text{where}\\
&y_{s,A}^{j}(t) =y_s+\frac{1}{2} (t-x_j)\sum_{i=2}^{m}x_i b_{ji}^{s}+b_{j1}^{s}\int_{x_j}^t \psi (\gamma _A^j (r)) \, dr, \qquad \mbox { for  } s=1,\dots ,n.
\end{split}
\end{equation}
Moreover observe that
\begin{equation}\label{5.21}
\frac{d^2}{dt^2}  y_{s,A}^{j}(t)=\frac{d}{dt} \Big[ \, \frac{1}{2} \sum_{i=2}^{m}x_i b_{ji}^{s}+b_{j1}^{s} \psi (\gamma _A^j (t))\, \Big] = b_{j1}^{s} w_j(\gamma ^j_A(t)).
\end{equation}

$\mathbf{Step \, 2.}$ 
Assume $A,  B\in \mathcal I $ (up to reducing $\mathcal I$) with $A=(x,y)$ and $B=(x,\bar y) $.
We prove that
\begin{equation}\label{finaleCC}
\frac{|\psi (A)-\psi  (B)|}{|y-\bar y|^{1/2} _{\R^{n}} }  \leq  \frac{3}{\mathcal{B}_m}\delta ,
\end{equation}   
where $\delta := \delta(|y-\bar y|_{\R^{n}} )$. Suppose on the contrary that \eqref{finaleCC} is not true, i.e.
\[
\frac{|\psi (A)-\psi  (B)|}{|y-\bar y|^{1/2}_{\R^{n}} }  > \frac{3}{\mathcal{B}_m}\delta .
\]  
Let $b_{ j1}^{s} \ne 0$ for some  $s\in \{1,\dots , n\} $ and $j=2,\dots , m$. Observe that if $b_{ j1}^{s}= 0$ for all $s\in \{1,\dots , n\} $ and $j=2,\dots , m$, then $w_j$ is smooth for each $j=2,\dots , m$.

Let $\gamma ^j_{A}$ and $\gamma ^j_{ B}$ with $$\gamma ^j_{A}(t)=(x_2,\dots, x_{j-1},t,x_{j+1},\dots ,x_m,y_1(t),\dots ,y_n(t))$$ and
$$\gamma ^j_{ B}(t)=(x_2,\dots ,x_{j-1},t,x_{j+1},\dots ,x_m,\bar y_1(t),\dots ,\bar y_n(t)).$$ 
Suppose that $y_s \geq \bar y_s$ (for the other case it is sufficient to exchange the roles of $A$ and  $B$).  By $\eqref{5.21ok}$ and $\eqref{5.21}$, for $t\in [x_j-r_0 ,x_j+r_0 ]$ we have
\[
\begin{aligned}
&y_s(t)-\bar y_s(t) - (y_s-\bar y_s)\\
&=\int _{x_j}^t \biggl[ \dot y_s(x_j) -\dot {\bar y}_s (x_j)+\int _{x_j}^{r'} (\ddot {\bar y }_s (r)-\ddot y'_s (r))\, dr   \biggl] dr'  \\
& = b_{j1}^{s} (t-x_j)(\psi (A)-\psi (B))+ b_{j1}^{s} \int _{x_j}^t\int _{x_j}^{r'} \big( w_j (\gamma ^j_{A}(r)) -w_j (\gamma ^j_{ B}(r))\big)\, dr  dr' .\\
\end{aligned}
\]
Now using the following facts
\begin{equation*}
\begin{aligned}
\max_{ r } | \dot y_s (r)|  = \max_{ r } \big| \frac{1}{2} \sum_{i=1}^{m} x_i b_{ji}^{s} + b_{j1}^{s} \psi (\gamma ^j_{A}( r)) \big|  \leq \mathcal{B}_M \biggl(\frac{1}{2}K+M \biggl)
\end{aligned}
\end{equation*} 
and
\begin{equation*}
\begin{aligned}
|\gamma ^j_{A}(r) - \gamma ^j_{ B}(r)| & \leq |\gamma ^j_{A} (x_j) - \gamma ^j_{ B}(x_j)| + |r-x_j|(\max_{r } | \dot y_s ( r)| + \max_{ r } | \dot {\bar y}_s ( r)| )\\
& \leq \vert A-B\vert _{\R^{m+n-1}}  + |t-x_j|(\max_{ r } | \dot y_s ( r)| + \max_{\hat r } | \dot {\bar y}_s ( r)| )\\
& \leq  |y-\bar y|_{\R^{n}}  +|t-x_j| \mathcal{B}_M (K+2M )
\end{aligned}
\end{equation*} 
we obtain
\begin{equation}\label{5.24}
\begin{aligned}
&y_s(t)-\bar y_s(t) - (y_s-\bar y_s)\\
& \leq  b_{j1}^{s} (t-x_j)(\psi (A)-\psi (B))+|b_{j1}^{s} |(t-x_j)^2 \sup_{r}\beta \big(\vert\gamma ^j_{A} (r) - \gamma ^j_{ B}(r)\vert\big)  \\
& \leq b_{j1}^{s} (t-x_j)(\psi (A)-\psi (B))+
|b_{j1}^{s} |(t-x_j)^2 \beta \left( |y-\bar y| _{\R^{n}} +|t-x_j| \mathcal{B}_M (K+2M ) \right) 
\end{aligned}
\end{equation} 

So if $b_{j1}^{s} (\psi (A)-\psi (B))>0$  put $t:= x_j- \frac{|y-\bar y|_{\R^{n}} ^{1/2}}{\delta }$ in $\eqref{5.24}$ and $t:=x_j+\frac{|y-\bar y|^{1/2}_{\R^{n}} }{\delta }$ otherwise. Observe that in both cases we conclude that
\begin{equation}\label{phi1}
-| b_{j1}^{s} | |\psi (A)-\psi (B)|  < -3 \delta  |y-\bar y|^{1/2}_{\R^{n}} 
\end{equation} 

If $|y-\bar y|_{\R^{n}}$ is "sufficiently small" $\gamma ^j_{A} $ and $\gamma ^j_{B}$ are well defined; it is sufficient to take $r_0 \geq |y-\bar y|_{\R^{n}}^{1/4} \geq |y-\bar y|_{\R^{n}}^{1/2}/\delta  =|t-x_j| $ (it is exactly here that we reduce $\mathcal I$). Using $\eqref{5.24}$, $\eqref{phi1}$ and  the definition of $\beta$ we obtain in both cases 
\begin{equation}\label{5.24bis}
\begin{aligned}
y_s(t)-\bar y_s(t) & \leq y_s-\bar y_s +|b_{j1}^{s}| |y-\bar y|^{1/2}_{\R^{n}} \frac{-|\psi (A)-\psi (B)|}{\delta } +\\
& \hspace{0,5 cm} + \frac{1}{\delta ^2}|b_{j1}^{s}| |y-\bar y| _{\R^{n}} \beta  \left( |y-\bar y|_{\R^{n}} + \mathcal{B}_M(K+2M) \frac{|y-\bar y|^{1/2} _{\R^{n}}}{\delta} \right)    \\
& \leq y_s-\bar y_s  - 3 |y-\bar y|_{\R^{n}}^{1/2} |y-\bar y|^{1/2} _{\R^{n}} + |b_{j1}^{s}| |y-\bar y| _{\R^{n}} \frac{ \beta  \left( E|y-\bar y|^{1/4}_{\R^{n}} \right)}{\delta ^2}    \\
& \leq y_s-\bar y_s - 3 |y-\bar y|_{\R^{n}}+\mathcal{B}_M \biggl( \frac{1}{\mathcal{B}_M}\biggl) |y-\bar y|_{\R^{n}} \leq -|y-\bar y| _{\R^{n}}<0.
\end{aligned}
\end{equation}

This leads to a contradiction; indeed if $y_s>\bar y_s$, then $y_s(\cdot )$ and  $\bar y_s(\cdot )$ are solutions of the same Cauchy problem
\begin{equation}\label{sCp}
\dot y _s(r)=\frac{1}{2} \sum_{i=2}^{m}x_i b_{ji}^{s}+b_{j1}^{s} \psi (x_2,\dots ,x_{j-1}, r,x_{j+1},\dots ,x_m, y_1,\dots ,y_{s-1}, y_s(r) ,y_{s+1}, \dots , y_n )
\end{equation} 
with initial data $y_s(x_j)=y_s$ and $\bar y_s$ respectively, but two such solutions cannot meet, while $y_s(x_j)-\bar y_s(x_j)>0$ and $y_s(t)-\bar y_s(t)<0$  for a certain $t\in (x_j-r_0 , x_j+r_0)$ with $r_0$ sufficiently large. 

On the other hand if $y_s=\bar y_s$, by \eqref{5.24bis} we conclude that $y_s(t) \ne \bar y_s(t)$ for $t=x_j+ \frac{|y-\bar y|^{1/2} _{\R^{n}}}{\delta }$ or $t=x_j- \frac{|y-\bar y|^{1/2}_{\R^{n}}} {\delta }$.  Then we have the contradiction because $y_s(\cdot )$ and  $\bar y_s(\cdot )$ are solutions of the same Cauchy problem \eqref{sCp} with initial data $y_s(t)$ and $ \bar y_s(t)$ but two such solutions cannot meet, while $y_s=\bar y_s$.

Hence \eqref{finaleCC} follows.

$\mathbf{Step \, 3.}$ 
Now let $A, A', B\in \mathcal I $ (up to reducing $\mathcal I$) with $A=(x,y)$, $A'=(x',y') $ and $B=(x,y') $. We want to show that 
\begin{equation}\label{equVERIFICA}
\frac{\vert \psi(B)- \psi (A')\vert }{\vert x-x'\vert^{1/2} _{\R^{m-1}}} \leq N |A-A'|^{1/2}_{\R^{m+n-1}} + \frac{3h \delta }{\mathcal{B}_m}
\end{equation}
where $\delta = \delta( h^2 \vert x-x'\vert _{\R^{m-1}})$. We suppose on the contrary that
\begin{equation}\label{equVERIFICA22}
\frac{\vert \psi(B)- \psi (A')\vert }{\vert x-x'\vert^{1/2} _{\R^{m-1}} } > N |A-A'|^{1/2}_{\R^{m+n-1}} + \frac{3h \delta }{\mathcal{B}_m}. 
\end{equation}
Set 
\begin{equation*}
\begin{aligned}
D_{j} & :=\gamma ^j_{D_{j-1}}(x_{j}) \quad \mbox{ for } j=2,\dots ,m \\
\end{aligned}
\end{equation*}
with $D_1:=A'$. A computation gives that 
\begin{equation*}
D_j=(x_2,\dots ,x_j,x'_{j+1},\dots ,x'_m,y^{D_j})
\end{equation*} 
with
\begin{equation*}
\begin{aligned}
y^{D_j}_s & = y'_s+\sum_{l=2}^{j} \left( b_{l1}^{s} \int_{x'_l}^{x_l} \psi \left(\gamma ^j_{D_{j-1}}(r)  \right)\, dr +\frac{1}{2}(x_l-x'_l)\Big(\sum_{ i=2 }^{l} x_i b_{li}^{s}+\sum_{ i=l+1}^{m} x'_i b_{li}^{s}\Big)  \right)\\
\end{aligned}
\end{equation*} 
for $s=1,\dots ,n$ and consequently, recalling that $h=\left(n\mathcal{B}_M ( K+M) \right)^{1/2}$
\begin{equation}\label{rapportoD_m}
\left| y' - y^{D_m}\right| _{\R^{n}} \leq n \mathcal{B}_M \left(M + K \right)|x-x'| _{\R^{m-1}}  = h^{2} |x-x'|_{\R^{m-1}} .
\end{equation} 
 Moreover we have 
\begin{equation*}
 \sum_{j=2}^m \left|\psi (D_{j-1})-\psi (D_j) \right|  =  \sum_{j=2}^m \left|\int_{x'_j}^{x_j} w_j (\gamma ^j_{D_{j-1}}(t))\, dt \right|   \leq N|x-x'|_{\R^{m-1}} 
\end{equation*} 
In order to stay in the $\mathcal I $ of the previous step, we want $x'-x$ sufficiently small (and precisely when $N|x-x'|^{1/2}_{\R^{m-1}}  \leq |x-x'|^{1/4} _{\R^{m-1}}  \leq \delta $). By \eqref{equVERIFICA22} and \eqref{rapportoD_m}, we get
\begin{equation*}
\begin{aligned}
|\psi (B)-\psi (D_m)| & \geq |\psi (B)-\psi (A')|- \sum_{l=2}^m|\psi (D_{l-1})-\psi (D_l)| \\
 & > \left(N |A-A'|^{1/2} _{\R^{m+n-1}} + \frac{3 h }{\mathcal{B}_m}  \delta -N|x-x'|_{\R^{m-1}} ^{1/2}\right) |x-x'|^{1/2}_{\R^{m-1}}  \\
 & \geq  \frac{3h }{\mathcal{B}_m} \delta |x-x'|^{1/2}_{\R^{m-1}} \\
 & \geq \frac{3}{\mathcal{B}_m}\delta  |y'-y^{D_m}|^{1/2}_{\R^{n}} 
\end{aligned}
\end{equation*} 
so that we are in the first case again (see $\eqref{finaleCC}$ with the couple $B=(x,y'),D_m=(x,y^{D_m})$ instead $A,B$ respectively) which we have seen is not possible. Hence \eqref{equVERIFICA} holds.

$\mathbf{Step \, 4.}$ 
Using Step 2. and Step 3. we deduce that 
\begin{equation*}
\begin{aligned}
 \frac{|\psi (A)-\psi  (A')|}{|A-A'|^{1/2} _{\R^{m+n-1}}  } & \leq \frac{|\psi (A)-\psi  (B)|}{|y-y'|^{1/2} _{\R^{n}} } + \frac{|\psi (B)-\psi  (A')|}{|x-x'|_{\R^{m-1}} ^{1/2}} \\
 & \leq \frac{3 \left(1+h \right) }{\mathcal{B}_m} \delta ( \max\{1, h^2\} |A-A'|_{\R^{m+n-1}} ) +N|A-A'|^{1/2}_{\R^{m+n-1}}  \\
 &=\alpha (|A-A'|_{\R^{m+n-1}} )
\end{aligned}
\end{equation*}
for all $A=(x,y), A' =(x',y'), B=(x,y') \in \mathcal I $.  Then according to \eqref{alpha88} and \eqref{defialfa0} we have that $\lim_{r\to 0}\alpha (r)=0$ and we are able to control $\alpha $ with only $K,M,N,\mathcal{B}_M, \mathcal{B}_m$ and $\beta $.
%
%

\end{proof}

\end{document}